\theoremstyle{plain}
\newtheorem{theorem}{Theorem}[section]
\newtheorem{proposition}[theorem]{Proposition}
\newtheorem{corollary}[theorem]{Corollary}
\newtheorem{lemma}[theorem]{Lemma}
\newtheorem{question}[theorem]{Question}
\theoremstyle{definition}
\newtheorem{definition}[theorem]{Definition}
\newtheorem{example}[theorem]{Example}
\theoremstyle{remark}
\newtheorem{remark}[theorem]{Remark}
\newtheorem{claim}[theorem]{Claim}
\begin{document}
\bibliographystyle{alpha}

\title[Exceptional set and the Green--Griffiths locus]{The exceptional set and the Green--Griffiths locus do not always coincide} 

\author{Simone Diverio \and Erwan Rousseau}
\address{Simone Diverio \\ CNRS et Institut de Math\'ematiques de Jussieu -- Paris Rive Gauche.}
\email{diverio@math.jussieu.fr} 
\address{Erwan Rousseau \\ Aix Marseille Universit\'e, CNRS, Centrale Marseille, I2M, UMR 7373, 13453 Marseille, France.}
\email{erwan.rousseau@univ-amu.fr}

\thanks{Both authors were partially supported by the ANR project \lq\lq POSITIVE\rq\rq{}, ANR-2010-BLAN-0119-01.}
\keywords{}
\subjclass[2010]{Primary: 32Q45, 32M15; Secondary: 37F75.}
\date{\today}

\begin{abstract}
We give a very simple criterion in order to ensure that the Green--Griffiths locus of a projective manifold is the whole manifold. Next, we use it to show that the Green--Griffiths locus of any projective manifold uniformized by a bounded symmetric domain of rank greater than one is the whole manifold. In particular, this clarifies an old example given by M. Green to S. Lang.
\end{abstract}

\maketitle

\section{introduction}

Let $X$ be a complex projective manifold, $V\subset T_X$ a rank $r$ holomorphic subbundle, and call
$$
E_{k,m}^{GG}V^*\to X
$$
the holomorphic vector bundle of jet differentials of order $k$ and weighted degree $m$ acting on germs of holomorphic curves tangent to $V$. For definitions and basic properties of these bundles we refer to \cite{GG80} and \cite{Dem97}. Jet differentials are particularly useful in connection to hyperbolicity--related problems thanks to the following.

\begin{theorem}[Green--Griffiths \cite{GG80}, Siu--Yeung \cite{SY97}, and Demailly \cite{Dem97}]\label{fund}
Let $(X,V)$ be a directed projective manifold and $A\to X$ an ample line bundle. Then, for any entire curve $f\colon\mathbb C\to X$ tangent to $V$ and any $P\in H^0(X,E_{k,m}^{GG}V^*\otimes A^{-1})$, one has $P(f', f'', \dots,f^{(k)})\equiv 0$. 
\end{theorem}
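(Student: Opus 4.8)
The plan is to reduce the statement to the Ahlfors--Schwarz lemma. I will realize $\|P(f',\dots,f^{(k)})\|^{2}$, computed with suitable metrics, as a (possibly degenerate) pseudo-metric $\mu\,|dt|^{2}$ on $\mathbb C$ whose curvature form dominates a positive multiple of $\mu$ itself; since $\mathbb C$ carries no complete metric of curvature bounded above by a negative constant, this forces $\mu\equiv0$, i.e.\ $P(f',\dots,f^{(k)})\equiv0$. The strict positivity that makes this work is exactly what the twist by the ample bundle $A$ provides. Two cases are trivial and should be dealt with first: if $f$ is constant then $f^{(j)}\equiv0$ for every $j\ge1$, so $P(f',\dots,f^{(k)})\equiv0$ as soon as the weighted degree $m$ is $\ge1$; and if $m=0$ then $E_{k,0}^{GG}V^*=\mathcal O_X$, whence $H^0(X,\mathcal O_X\otimes A^{-1})=0$ because $A$ is ample on the connected projective manifold $X$, so $P=0$. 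From now on assume $f$ non-constant and $m\ge1$.

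The mechanism is already transparent when $m=1$, where $E_{k,1}^{GG}V^*=V^*$ and $P$ is simply a bundle morphism $V\to A^{-1}$. Fix a smooth metric $h$ on $A$ with curvature $\omega:=i\Theta_h(A)>0$ everywhere (possible since $A$ is ample), and equip $V$ with the induced metric $\omega|_V$. Put $\mu(t):=\|P(f'(t))\|_{h^{-1}}^{2}\ge0$. Locally, where $\mu>0$, one has $\log\mu=\log|v|^{2}+\varphi\circ f$ with $v$ holomorphic and $\varphi$ a local potential of $\omega$ (so $i\partial\bar\partial\varphi=\omega$) coming from $h$; hence $i\partial\bar\partial\log\mu=f^{*}\omega$. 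On the other hand $\|P(\xi)\|_{h^{-1}}\le C\,\|\xi\|_{\omega|_V}$ for a uniform constant $C$ (the operator norm of $P$ on the compact $X$), and $f$ is tangent to $V$, so $\mu\,i\,dt\wedge d\bar t\le C\,f^{*}\omega$. Combining, $i\partial\bar\partial\log\mu\ge C^{-1}\mu\,i\,dt\wedge d\bar t$ wherever $\mu>0$, which is precisely the hypothesis of the Ahlfors--Schwarz lemma. The lemma then bounds $\mu$ on every disc $\Delta_R$ by $C$ times the Poincar\'e metric of $\Delta_R$; letting $R\to\infty$ gives $\mu\equiv0$ on $\mathbb C$, i.e.\ $P(f')\equiv0$.

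For arbitrary $m$ the same philosophy applies after lifting to a jet space: one passes to the jet tower $Y\to X$ of Green--Griffiths / Demailly, on which (a twisted power of) a tautological line bundle $\mathcal O_Y(1)$ encodes $E_{k,m}^{GG}V^*$, so that $P$ becomes a section of $\mathcal O_Y(m)$ tensored with the pull-back of $A^{-1}$; the non-constant $f$ has a $k$-jet lift $f_{[k]}\colon\mathbb C\to Y$ along which $\mathcal O_Y(-1)$ acquires a canonical metric built from the derivatives of $f$. One then forms $\mu:=\|P(f_{[k]})\|^{2/m}$ using this metric together with $h^{-1}$ on $A$ and aims again at $i\partial\bar\partial\log\mu\ge c\,\mu\,i\,dt\wedge d\bar t$ on each $\Delta_R$: the $A^{-1}$-twist contributes the term $f^{*}\omega>0$ exactly as above. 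The point --- and this is where the argument is genuinely harder than the case $m=1$ --- is that $\|P(f_{[k]})\|$ also involves the higher derivatives $f'',\dots,f^{(k)}$, which are \emph{not} controlled by any power of $\|f'\|$ and for which the fibrewise curvature of the tautological metric carries no favourable sign a priori. The remedy, and the main obstacle to turn into a clean estimate, is to absorb these terms by the lemma on the logarithmic derivative of Nevanlinna theory: along $f_{[k]}$ the vertical jets grow so slowly, measured by the Nevanlinna characteristic, that --- outside a set of finite measure in $r$ --- their contribution to the curvature is negligible. Ahlfors--Schwarz then again forces $\mu\equiv0$, i.e.\ $P(f',\dots,f^{(k)})\equiv0$, and everything besides this logarithmic-derivative input is formal.
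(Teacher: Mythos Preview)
The paper does not give its own proof of this theorem: it is quoted with attribution to \cite{GG80}, \cite{SY97}, \cite{Dem97} and used as a black box throughout. There is therefore nothing in the paper to compare your argument against; what you have written is a sketch of the classical proof from those references.

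Your outline is sound and the $m=1$ case is correct as written. For general $k,m$ you have the right ingredients --- the jet lift, the tautological metric, and the logarithmic-derivative lemma --- but the way you phrase the endgame is slightly off. Once the logarithmic-derivative lemma enters, one is no longer in a position to apply Ahlfors--Schwarz on discs $\Delta_R$ and let $R\to\infty$: the lemma only gives an estimate outside an exceptional set of radii of finite measure, which is incompatible with a pointwise curvature inequality on every $\Delta_R$. The standard route (e.g.\ Demailly's) commits fully to the Nevanlinna side: assuming $\sigma:=P(f',\dots,f^{(k)})\not\equiv 0$, one integrates $\log\|\sigma\|_{h^{-1}}^{2}$ over circles, uses Jensen to produce $T_{f,\omega}(r)$ on one side, and bounds the other side by $O(\log r+\log T_f(r))$ via the logarithmic-derivative lemma. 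This forces $T_f(r)=O(\log r)$, so $f$ extends to a map $\mathbb P^1\to X$; but then $\sigma$ would extend to a nonzero global section of the pull-back of $A^{-1}$ to $\mathbb P^1$, which has negative degree (as $f$ is non-constant), a contradiction. So the slogan is ``Ahlfors--Schwarz for $m=1$, Nevanlinna for general $m$'' rather than ``Ahlfors--Schwarz again after cleaning up with the log-derivative lemma''. This is a matter of packaging, not a genuine gap in your understanding of why the result holds.
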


Now, fix an ample line bundle $A\to X$. Define $GG_A(X,V)$ to be the set of points $x\in X$ such that for all integers $k>0$ there exists a $k$-jet of holomorphic curve $\varphi_k\colon(\mathbb C,0)\to (X,x)$ tangent to $V$ with the property that for all integers $m>0$, every global jet differential of order $k$ and weighted degree $m$ with values in $A^{-1}$ vanishes whenever evaluated on (the $k$-jet defined by) $\varphi_k$ at $0$. Thus, if $f\colon\mathbb C\to X$ is an entire curve tangent to $V$, Theorem \ref{fund} tells us immediately that $f(\mathbb C)\subset GG_A(X,V)$. We shall see in the next section that the locus $GG_A(X,V)$ is indeed independent of the particular ample line bundle chosen.

\begin{definition}
Let $X$ be a projective manifold and $V\subset T_X$ a holomorphic subbundle. The \emph{exceptional set} $\operatorname{Exc}(X,V)$ of $(X,V)$ is defined to be the Zariski closure of the union of all the images of entire curves traced in $X$ and tangent to $V$. 
The \emph{Green--Griffiths locus $GG(X,V)$ of $(X,V)$} is defined as
$$
GG(X,V)=GG_A(X,V),
$$
for some (and hence any) ample line bundle $A\to X$.

In the \emph{absolute case $V=T_X$}, we shall simply call $\operatorname{Exc}(X,T_X)=\operatorname{Exc}(X)$ and $GG(X,T_X)=GG(X)$.
\end{definition}

By Theorem \ref{fund}, one always has the inclusion:
$$
\operatorname{Exc}(X,V)\subset GG(X,V).
$$

It is conjectured in \cite{GG80} that if $X$ is a $n$ dimensional projective manifold of general type and $V=T_X$, then there should exists a large integer $k=k(X)$ such that the growth rate of 
$$
m\mapsto \dim H^0(X,E_{k,m}^{GG}T_X^*)
$$ 
would have to be maximal, that is asymptotic with $m^{(k+1)n-1}$. This conjecture was proven in \cite{GG80} for projective surfaces and therein supported in all dimension by an Euler characteristic computation. It was then established in the special case of smooth projective hypersurfaces of general type in \cite{Mer10} and was finally proven in full generality only very recently by J.--P. Demailly in \cite{Dem11} by means of his holomorphic Morse inequalities combined with a delicate \lq\lq probabilistic\rq\rq{} curvature estimate. In particular, if $X$ is a projective manifold of general type and $A\to X$ an ample line bundle, then there always exists large integers $k,m>0$ such that $H^0(X,E_{k,m}^{GG}T^*_X\otimes A^{-1})\ne\{0\}$ and indeed this space is very big. It is thus legitimate to ask if much more is true, namely that if $X$ is a projective manifold of general type, then the Green--Griffiths locus is always a proper subvariety of $X$, \textsl{i.e.}
$$
GG(X)\subsetneq X.
$$
An affirmative answer to this would lead directly to the solution of the so--called Green--Griffiths--Lang conjecture claiming the algebraic degeneracy of entire curves in projective manifolds of general type. One could even speculate more and ask whether $\operatorname{Exc(X)}=GG(X)$, that is whether $GG(X)$ provides an algebraic description of the exceptional set. About this, let us quote S. Lang from his paper \cite{Lan86}, page 200:

\smallskip
\noindent
\emph{\lq\lq In particular, the exceptional set [...] is contained in the 
Green--Griffiths set [...]. I asked Green--Griffiths whether they might be equal. 
Green told me that the two sets are not equal in general. Certain Hilbert 
modular surfaces constructed by Shavel \cite{Sha78}, compact quotients of the product 
of the upper half--plane with itself, provide a counterexample which is hyperbolic, but such that the Green--Griffiths set is the whole variety. Thus the jet 
construction appears insufficient so far to characterize the exceptional set [...] completely algebraically.\rq\rq{}}
\smallskip

\noindent
In \cite{Lan86} there are no more details about that, nor in the subsequent literature as far as we know: we have somehow the impression that this paragraph has been passed by unnoticed. Here, we take the opportunity to give a detailed account of the above key fact by proving indeed a much more general result, as well as several consequences.

Let $\mathcal W$ be a coherent analytic sheaf on a complex manifold $X$ and $m>0$ be an integer. We shall denote by $\mathcal W^{[m]}$ the double dual of the $m$-th tensor power $\mathcal W^{\otimes m}$ of $\mathcal W$. The starting point is the following criterion.

\begin{theorem}\label{main}
Let $(X,V)$ be a complex projective directed manifold. Suppose that there exists a saturated coherent analytic subsheaf $\mathcal W\subset\mathcal O_X(V)$ such that for some ample line bundle $A\to X$ one has
$$
H^0(X,(\mathcal W^*)^{[m]}\otimes A^{-1})=0,\quad\forall m\ge 1.
$$
Then, $GG(X,V)=X$.
\end{theorem}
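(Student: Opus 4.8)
The plan is to exploit the following mechanism: a non-constant $k$-jet tangent to $\mathcal W$ is forced to annihilate every global jet differential with values in $A^{-1}$, because, once restricted to such jets, these differentials are controlled by tensor powers of $\mathcal W^*$, and the latter carry no section twisted by $A^{-1}$ by hypothesis. As a first step I would pass to a genuine subbundle. Since $\mathcal W$ is \emph{saturated} in $\mathcal O_X(V)$, the quotient $\mathcal O_X(V)/\mathcal W$ is torsion free, hence locally free away from a closed analytic subset $S\subset X$ with $\operatorname{codim}_X S\ge 2$; on $X':=X\setminus S$ the inclusion $W:=\mathcal W|_{X'}\hookrightarrow V|_{X'}$ is then a genuine subbundle inclusion, with $W$ of some rank $r'\ge 1$, so that $(X',W)$ is a directed manifold. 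Through every point $x\in X'$ and for every $k\ge 1$ there exist non-constant $k$-jets tangent to $W$: taking local holomorphic generators $s_1,\dots,s_{r'}$ of $\mathcal W$ near $x$ (pointwise independent on $X'$) and solving the holomorphic differential equation $\varphi'(t)=\sum_i a_i(t)\,s_i(\varphi(t))$ with $\varphi(0)=x$ and $(a_i(0))\ne 0$ yields a germ of holomorphic curve through $x$, with nonzero speed, tangent to $W$.

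Next I would use the natural surjective restriction morphism of Green--Griffiths jet differentials to the directed submanifold $(X',W)$, namely $E_{k,m}^{GG}V^*\to E_{k,m}^{GG}W^*$ over $X'$: in local coordinates adapted to $W$ it is simply the restriction of a weighted homogeneous polynomial in the jet variables to the linear subspace cut out by $W$, and it is compatible with evaluation on $k$-jets tangent to $W$. Hence, for $x\in X'$, a $k$-jet $\varphi_k$ at $x$ tangent to $W$, and any $P\in H^0(X,E_{k,m}^{GG}V^*\otimes A^{-1})$, the value $P(\varphi_k)$ depends only on the image of $P|_{X'}$ in $H^0\bigl(X',E_{k,m}^{GG}W^*\otimes A^{-1}\bigr)$. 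It therefore suffices to prove that this group vanishes for all $k,m\ge 1$.

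This is the core of the argument. I would invoke the weight filtration of $E_{k,m}^{GG}W^*$, whose associated graded pieces are the bundles $S^{\ell_1}W^*\otimes\cdots\otimes S^{\ell_k}W^*$ with $\ell_1+2\ell_2+\cdots+k\ell_k=m$. In characteristic zero each $S^{\ell}W^*$ is a direct summand of $(W^*)^{\otimes\ell}$, so each graded piece is a direct summand of $(W^*)^{\otimes p}$ with $p=\ell_1+\cdots+\ell_k$ and $1\le p\le m$. Now $(W^*)^{\otimes p}$ is locally free on $X'$ and agrees there with the restriction of the reflexive sheaf $(\mathcal W^*)^{[p]}$ on $X$; since $\operatorname{codim}_X S\ge 2$, the Hartogs extension property for reflexive sheaves gives
\[
H^0\bigl(X',(W^*)^{\otimes p}\otimes A^{-1}\bigr)=H^0\bigl(X,(\mathcal W^*)^{[p]}\otimes A^{-1}\bigr)=0
\]
by hypothesis, for every $p$ with $1\le p\le m$. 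Therefore the $A^{-1}$-twist of each graded piece has no section over $X'$, and the vanishing of $H^0\bigl(X',E_{k,m}^{GG}W^*\otimes A^{-1}\bigr)$ follows from the filtration and the left exactness of $H^0$.

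Putting the steps together, every global section of $E_{k,m}^{GG}V^*\otimes A^{-1}$ vanishes on every $k$-jet tangent to $W$, for all $m\ge 1$; together with the existence of non-constant such $k$-jets through each point of $X'$, this yields $X\setminus S\subseteq GG(X,V)$. Since $GG(X,V)$ is Zariski closed and $S$ has codimension $\ge 2$, we conclude $GG(X,V)=X$. I expect the delicate point to be the third step: one must recognize that, after restriction to jets tangent to $\mathcal W$, the relevant cohomology is precisely that of the reflexive extensions $(\mathcal W^*)^{[p]}$ of tensor powers of $\mathcal W^*$, so that the hypothesis can be brought to bear; this blends the weight filtration of $E_{k,m}^{GG}W^*$ with the Hartogs-type extension across the codimension $\ge 2$ locus $S$. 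The restriction morphism of the second step is standard --- it is part of Demailly's functoriality of jet bundles under directed submanifolds --- but is genuinely needed; one should also confirm that the passage from the dense open subset $X\setminus S$ to $X$ is legitimate, which relies on $GG(X,V)$ being closed.
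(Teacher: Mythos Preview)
Your proof is correct and follows essentially the same approach as the paper's: reduce to a genuine subbundle $W$ on the complement of a codimension~$\ge 2$ set, use the weight filtration of $E_{k,m}^{GG}W^*$ together with Hartogs-type extension for the reflexive sheaves $(\mathcal W^*)^{[p]}$ to kill $H^0\bigl(X',E_{k,m}^{GG}W^*\otimes A^{-1}\bigr)$, then invoke the surjection $E_{k,m}^{GG}V^*\to E_{k,m}^{GG}W^*$ and closedness of $GG(X,V)$. The only cosmetic differences are that you use the direct-summand inclusion $S^\ell W^*\hookrightarrow (W^*)^{\otimes\ell}$ where the paper uses the plain subbundle inclusion, and you spell out the existence of non-constant $W$-tangent jets explicitly.
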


Observe that we do not require any Frobenius integrability property for $\mathcal W$. Observe moreover, that by a very recent result of \cite{CP13}, in the case where $\mathcal W=T_X$, the condition in the statement is satisfied if and only if $X$ is not of general type. We now pass to direct and less direct consequences of the criterion above. 

First of all, we have the following result. It follows straightforwardly from Theorem \ref{main} and a techincal elementary lemma about growth rate of sections of tensor powers of pull--back of vector bundles (see Section \ref{sectionproduct} for more details and in fact a slightly more general result concerning finite surjective morphisms).

\begin{corollary}\label{introproduct}
Let $X\simeq Y\times Z$ be a complex projective manifold isomorphic to a non trivial product. Then, $GG(X)=X$.
\end{corollary}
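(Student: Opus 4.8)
The plan is to apply Theorem~\ref{main} to the absolute directed manifold $(X,T_X)$, taking for $\mathcal W$ the tangent bundle to one of the two factors. Write $X=Y\times Z$, let $p\colon X\to Y$ and $q\colon X\to Z$ be the two projections, and observe that, the product being non-trivial, $\dim Y>0$ and $\dim Z>0$. The canonical splitting $T_X=p^*T_Y\oplus q^*T_Z$ shows that $\mathcal W:=\mathcal O_X(p^*T_Y)$ is a subbundle of $\mathcal O_X(T_X)$, and it is saturated since the quotient $\mathcal O_X(T_X)/\mathcal W\cong\mathcal O_X(q^*T_Z)$ is locally free, hence torsion free. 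As $\mathcal W$ is locally free, so is every tensor power $(\mathcal W^*)^{\otimes m}$, whence
$$
(\mathcal W^*)^{[m]}=(\mathcal W^*)^{\otimes m}=\mathcal O_X\bigl(p^*((T_Y^*)^{\otimes m})\bigr),\qquad m\ge 1 .
$$
Thus, in order to conclude via Theorem~\ref{main} it is enough to exhibit one ample line bundle $A\to X$ with $H^0\bigl(X,\,p^*((T_Y^*)^{\otimes m})\otimes A^{-1}\bigr)=0$ for all $m\ge 1$.

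At this point I would use the independence of the Green--Griffiths locus from the chosen ample line bundle to pick $A$ adapted to the product structure: fix ample line bundles $A_Y\to Y$ and $A_Z\to Z$ and set $A:=p^*A_Y\otimes q^*A_Z$, which is ample on $X$. Then the K\"unneth formula gives
$$
H^0\bigl(X,\,p^*((T_Y^*)^{\otimes m})\otimes A^{-1}\bigr)=H^0\bigl(Y,(T_Y^*)^{\otimes m}\otimes A_Y^{-1}\bigr)\otimes H^0\bigl(Z,A_Z^{-1}\bigr),
$$
and the right-hand side vanishes because $A_Z$ is ample on the positive-dimensional manifold $Z$, so $H^0(Z,A_Z^{-1})=0$. (Alternatively, and without K\"unneth: a putative section, restricted to a fibre $\{y\}\times Z$ of $p$, would be a section with constant coefficients of a negative power of an ample line bundle on $Z\cong\{y\}\times Z$, hence zero for every $y$, hence identically zero.) Theorem~\ref{main} now yields $GG(X)=GG(X,T_X)=X$, as desired.

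In the product case this is, as one sees, essentially a formal matter; the point that requires actual work --- and the reason Section~\ref{sectionproduct} is phrased through a growth-rate lemma rather than the computation above --- is the slightly more general statement in which $X$ is only assumed to admit a finite surjective morphism onto a non-trivial product. There the natural choice of $\mathcal W$ is the saturation inside $T_X$ of the pull-back of $p^*T_Y$, which is no longer literally a pull-back of a sheaf from the base, and the pull-back of an ample bundle under a finite morphism, though ample, is not \lq\lq as positive\rq\rq{} as an arbitrary ample $A$ on $X$; one can therefore no longer invoke K\"unneth and must instead compare the growth rate of $m\mapsto\dim H^0\bigl(X,p^*(\mathcal E^{\otimes m})\bigr)$, which is polynomial in $m$ of degree at most $\dim Y$ once the exponential growth of $\operatorname{rk}\mathcal E^{\otimes m}$ is factored out, against the maximal rate $m^{\dim X}$ imposed by an ample bundle, using $\dim Y<\dim X$. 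I expect that growth-rate comparison, together with the bookkeeping needed to descend from the pull-back to the saturated subsheaf $\mathcal W$, to be the only genuine difficulty; the rest is the direct application of Theorem~\ref{main} carried out above.
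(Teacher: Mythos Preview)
Your proof is correct and, for the stated corollary, more elementary than the paper's. The paper proves the vanishing $H^0\bigl(X,p^*((T_Y^*)^{\otimes m})\otimes A^{-1}\bigr)=0$ for an \emph{arbitrary} ample line bundle $A$ via a growth-rate argument (Lemma~\ref{growth} and Lemma~\ref{pullback}): if the space were non-zero for some $m$, then $h^0(X,p^*S^k(T_Y^*))$ would grow like $k^{\dim X+r-1}$, contradicting the projection formula $h^0(X,p^*S^k(T_Y^*))=h^0(Y,S^k(T_Y^*))\le C\,k^{\dim Y+r-1}$. You instead exploit the freedom, built into the hypothesis of Theorem~\ref{main}, to choose $A$ adapted to the product, and then K\"unneth (or the fibrewise restriction argument) kills the sections immediately. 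This is a genuine simplification.

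Your final paragraph, however, misidentifies the generalization for which the growth-rate lemma is introduced. In Corollary~\ref{product} the product is the \emph{source} of the finite morphism, not the target: one has $\rho\colon\tilde X\simeq Y\times Z\to X$. One works on $\tilde X$, where your K\"unneth argument applies verbatim to conclude $GG(\tilde X)=\tilde X$, and then descends to $X$ via Proposition~\ref{finite}. So your simpler argument in fact covers that generalization as well, and the difficulties you anticipate (saturating a pulled-back subsheaf inside $T_X$, loss of positivity under pull-back) do not arise. The paper's Lemma~\ref{pullback} is stated in greater generality---any surjective morphism with connected fibres to a lower-dimensional base---which is perhaps why the authors chose to record it, but it is not forced by Corollary~\ref{product}.
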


This already shows that in order to produce a counterexample in the spirit of Green--Lang it is not necessary to look so far away at Hilbert modular surfaces, but it is indeed sufficient to consider the product of two hyperbolic compact Riemann surfaces. Here, it is worthwhile mentioning also that a slight variation of the corollary above permits to show that certain surfaces constructed in \cite{BCGP12} provide examples of simply connected surfaces of general type for which the Green--Griffiths locus is the entire surface but the exceptional locus is not known. Therefore, there exist surfaces whose hyperbolicity properties are unknown, and for which  it is not possible to use the jet differentials techniques alone in order to deduce such properties (see Section \ref{sectionproduct} for more details about these examples).

Next, let $\Omega\subset\mathbb C^n$ be a bounded symmetric domain, $\Gamma \subset\operatorname{Aut}(\Omega)$ a cocompact torsion free lattice and $X=\Omega/\Gamma$. The Bergman metric of $\Omega$ is $\operatorname{Aut}(\Omega)$-invariant and descends to a metric $\omega$ on every smooth quotient $X$. Moreover, the cohomology class of $\omega$ is $-2\pi\,c_1(X)$. Thus, $K_X$ is positive and in particular $X$ is projective and of general type. Moreover, whenever $X$ is a complex space whose universal cover is a bounded domain in $\mathbb C^n$, then every entire curve must be constant by Liouville's theorem. Hence, the exceptional locus $\operatorname{Exc}(X)$ is empty. If $\Omega$ has rank one or, equivalently, if it is isomorphic to the unit ball $\Omega\simeq\mathbb B^n$, then it is well known that $X$ has ample cotangent bundle. In particular, already by looking at order one jet differentials one finds that $GG(X)=\emptyset$. The situation changes drastically as soon as the rank of $\Omega$ becomes bigger than one.

\begin{theorem}\label{bsd}
Let $X$ be a projective manifold uniformized by a bounded symmetric domain $\Omega$. Then, either $\Omega\simeq\mathbb B^n$ and thus $GG(X)=\emptyset$, or $GG(X)=X$.
\end{theorem}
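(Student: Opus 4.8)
The strategy is to apply the criterion of Theorem \ref{main} with a well-chosen saturated coherent subsheaf $\mathcal W\subset\mathcal O_X(T_X)$, the point being to exploit the structure of bounded symmetric domains of rank $\ge 2$. First I would recall that a bounded symmetric domain $\Omega$ of rank $\ge 2$ is never isomorphic to the ball, and that its holomorphic tangent space at a point carries extra structure: by the Harish-Chandra / Polydisk theorem, $\Omega$ contains a totally geodesic polydisk $\Delta^r$ ($r=\operatorname{rank}\Omega\ge 2$) through any point, and more importantly, the automorphism group acts on the tangent space in a way that produces $\operatorname{Aut}(\Omega)$-invariant sub-objects. I would look for an $\operatorname{Aut}(\Omega)$-invariant distribution on $\Omega$ — equivalently, a sub-bundle of $T_\Omega$ invariant under the isotropy representation $K\to GL(T_0\Omega)$ — of rank $< n$, which then descends to a holomorphic sub-bundle (in particular a saturated coherent subsheaf) $\mathcal W\subset\mathcal O_X(T_X)$. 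For instance, when $\Omega$ is reducible, $\Omega\simeq\Omega_1\times\Omega_2$, one takes $\mathcal W$ to be the tangent distribution to the first factor (this is essentially Corollary \ref{introproduct}); for irreducible $\Omega$ of rank $\ge 2$ one uses the sub-bundle generated by the "minimal rational tangents" / the characteristic bundle of Mok, which is a proper $\operatorname{Aut}(\Omega)$-invariant sub-bundle.

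Having produced such a $\mathcal W$, the heart of the argument is to verify the vanishing hypothesis $H^0(X,(\mathcal W^*)^{[m]}\otimes A^{-1})=0$ for all $m\ge 1$. Here I would use the Bergman metric: since $\omega$ is $\operatorname{Aut}(\Omega)$-invariant and $\mathcal W$ comes from an invariant sub-bundle, the restriction of $\omega$ to $\mathcal W$ is an invariant Hermitian metric $h$ on $\mathcal W$, and its curvature can be computed on $\Omega$ and is then automatically the curvature of the induced metric on $X$. The key differential-geometric input is that the holomorphic sectional (or, more precisely, the relevant curvature) of the Bergman metric along directions in an invariant sub-bundle is seminegative, so by the Griffiths curvature formula for sub-bundles, $\mathcal W$ inherits a metric of seminegative curvature, hence $\mathcal W^*$ carries a metric with semipositive curvature — but we need the opposite for a direct Bochner vanishing, so instead I would argue that $\mathcal W$ itself, being a sub-bundle of $T_X$ with $K_X$ ample, has a metric whose curvature is bounded above, and use this to bound the growth of sections of $(\mathcal W^*)^{[m]}$: any such section, pulled back to $\Omega$, becomes a bounded holomorphic object (by invariance of the metric and compactness of $X$), and the twist by $A^{-1}$ forces it to decay, hence to vanish. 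Concretely, a section $s\in H^0(X,(\mathcal W^*)^{[m]}\otimes A^{-1})$ has pointwise norm $|s|_{h^*\otimes\alpha^{-1}}$ which is a global bounded function on $X$; lifting to $\Omega$ and using that the universal cover is a bounded domain together with the plurisubharmonicity of $\log|s|$, one concludes $s\equiv 0$. (One must be careful that $(\mathcal W^*)^{[m]}$ is only the reflexive hull, but since $\mathcal W$ is locally free away from a codimension-$\ge 2$ set — in the irreducible case it is in fact a sub-bundle — the metric extends and Hartogs-type extension handles the rest.)

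Finally I would assemble the pieces: in the rank-one case $\Omega\simeq\mathbb B^n$, $X$ has ample cotangent bundle and already order-one jet differentials with values in $A^{-1}$ exist and separate directions, so $GG(X)=\emptyset$, as noted in the excerpt; in the rank $\ge 2$ case, Theorem \ref{main} applied to the invariant sub-sheaf $\mathcal W$ constructed above yields $GG(X)=X$. I expect the main obstacle to be the curvature/vanishing step: one needs to identify the correct invariant sub-bundle $\mathcal W$ (in the irreducible case this requires genuine input from the theory of bounded symmetric domains — the characteristic bundle or the second fundamental form of the minimal totally geodesic sub-domains) and to show rigorously that $(\mathcal W^*)^{[m]}$ admits no sections even after an arbitrarily negative twist, which amounts to proving that the metric induced by the Bergman metric on $\mathcal W$ has sufficiently degenerate (semi-flat) curvature in the invariant directions so that no positivity can be squeezed out; the reducible case is comparatively easy and essentially reduces to Corollary \ref{introproduct}.
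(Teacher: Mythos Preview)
There is a genuine gap in the irreducible case. You propose to find a proper $\operatorname{Aut}(\Omega)$-invariant holomorphic sub-bundle $\mathcal W\subset T_\Omega$ and descend it to $X$. But when $\Omega$ is an \emph{irreducible} bounded symmetric domain, the isotropy representation $K\to\operatorname{GL}(T_{\Omega,o})$ is irreducible; there is \emph{no} proper $K$-invariant linear subspace of $T_{\Omega,o}$, hence no proper invariant sub-bundle of $T_\Omega$ at all. The characteristic bundle of Mok is not a sub-bundle of $T_X$: it is a closed submanifold $\mathcal S\subset P(T_X)$, a family of \emph{directions} at each point whose linear span is the full tangent space. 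So the object you want to feed into Theorem \ref{main} on $X$ simply does not exist in this case, and the curvature/plurisubharmonicity argument you sketch has nothing to act on.

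The paper's remedy is to change the ambient manifold. One works on the characteristic bundle $\pi\colon\mathcal S\to X$ itself: the liftings of minimal disks foliate $\mathcal S$ by a smooth holomorphic foliation $\mathcal M$ whose tangent line bundle is $\mathcal O_{P(T_X)}(-1)|_{\mathcal S}$. A strengthening of Mok's stable base locus result (essentially his metric rigidity plus Moore ergodicity) shows $\mathcal O_{\mathcal S}(1)$ has negative Kodaira--Iitaka dimension, so Theorem \ref{main} applied to the \emph{directed} manifold $(\mathcal S,T_{\mathcal M})$ gives $GG(\mathcal S,T_{\mathcal M})=\mathcal S$; since $d\pi|_{T_{\mathcal M}}$ is injective, Remark \ref{relmorph} then yields $GG(X)=X$. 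In other words, the invariant rank-one distribution you are looking for lives upstairs on $\mathcal S$, not on $X$.

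A secondary issue: even in the locally reducible case, when the quotient $X$ is \emph{irreducible} (e.g.\ an irreducible quotient of $\Delta^n$), you cannot simply invoke Corollary \ref{introproduct}, because $X$ is not a product even up to finite \'etale cover. The factor distribution $V_i\subset T_X$ does descend, but the vanishing $H^0(X,(V_i^*)^{\otimes m}\otimes A^{-1})=0$ is not a soft curvature statement: the induced Bergman metric on $V_i$ is flat in the transverse directions, so no Bochner argument applies. The paper obtains this vanishing from Mok's deep vanishing theorem for irreducible locally homogeneous bundles that are not strictly Griffiths-positive (Theorem \ref{mokvanishing}); your sketch does not supply an alternative.
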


This result should be compared to the \lq\lq all or nothing\rq\rq{} principle on the Lang locus of rational points in Shimura varieties established in \cite{UY10} (see also Section \ref{UY} for more details about this correspondence).

Theorem \ref{bsd} provides a very large class of projective manifolds of general type for which the worst possible situation occurs, as far as a description of the exceptional locus in terms of the Green--Griffiths locus is concerned:
$$
\emptyset =\operatorname{Exc}(X)\subsetneq GG(X)=X.
$$
This includes of course also the particular Hilbert modular surfaces above mentioned. 

Among these locally symmetric manifolds, it is worth distinguishing the two subclasses of locally reducible and of locally irreducible ones. The proof of Theorem \ref{bsd} will be quite different in the two cases. In fact, while in the locally reducible case it is immediate to identify (after a finite unramified covering) a foliation ---and thus a subsheaf of the tangent sheaf--- to work with in order to apply directly Theorem \ref{main}, in the locally irreducible case there is no natural foliations at our disposal. To overcome this difficulty, one possibility in the case of rank at least $2$ is to use the arithmeticity of the lattice and the theory of Shimura varieties (see beginning of Section \ref{locirred} for details). Alternatively, it is possible to avoid the use of the arithmeticity of the lattice using the theory of characteristic bundles \cite{Mok89}. This shall provide a holomorphic fiber bundle over $X$, endowed with a holomorphic foliation by curves given by liftings of minimal disks (we are grateful to N. Mok for suggesting us several things about this approach). This foliation will turn out to be of negative Kodaira dimension and this will be sufficient to obtain the desired conclusion (see Section \ref{locirred} for the details).

\subsubsection*{Acknowledgments}

The first--named author would like to warmly thank Nicolas Bergeron, Olivier Biquard, S\'ebastien Boucksom, Fabrizio Catanese, Gilles Courtois, Jean--Pierre Demailly, Antonio J. Di Scala, Bruno Klingler and Roberto Pignatelli for very interesting discussions about several themes concerning different aspects of this paper.

The second--named author thanks Ziyang Gao, Carlo Gasbarri, Steven Lu, Michael McQuillan and Ngaiming Mok for stimulating discussions. He also thanks CNRS for the opportunity to spend a semester in Montreal at the UMI CNRS-CRM and the hospitality of UQAM-Cirget where part of this work was done.

\section{Proof of Theorem \ref{main}}\label{2}

Let us start by recalling some basic facts from \cite{Dem97} about jet differentials in the general framework of directed manifolds (for more details see the cited references). 

So, let $(X,V)$ be a directed manifold, that is a complex manifold $X$ together with a holomorphic subbundle $V\subset T_X$, non necessarily integrable, of the holomorphic tangent bundle $T_X$ of $X$. We call $J_kV$ the bundle of $k$-jets of holomorphic curves $\varphi\colon(\mathbb C,0)\to X$ which are tangent to $V$, that is $\varphi'(t)\in V_{\varphi(t)}$ for all $t$, together with the projection $\varphi\mapsto \varphi(0)$ onto $X$. It is a holomorphic fiber bundle, which is naturally a subbundle of the bundle $J_kT_X$ of $k$-jets of holomorphic curves with values in $X$ with no further restrictions. Moreover, there is a canonically defined fiber--wise $\mathbb C^*$-action on $J_kV$ given by the reparametrization of a $k$-jet tangent to $V$ by the homotheties corresponding to elements of $\mathbb C^*$. Of course, this action is compatible with restriction to smaller subbundles.

Let $E_{k,m}^{GG}V^*\to X$ be the holomorphic vector bundle over $X$ whose fibers are complex valued polynomials on the fibers of $J_kV$ of weighted degree $m$ with respect to the above--defined $\mathbb C^*$-action. If $W\subset V\subset T_X$ are holomorphic subbundles, the inclusions
$$
J_kW\subset J_k V\subset J_k T_X
$$
induce surjective arrows
$$
E_{k,m}^{GG}T^*_X\to E_{k,m}^{GG}V^*\to E_{k,m}^{GG}W^*,
$$
so that $E_{k,m}^{GG}W^*$ can be regarded as a quotient of $E_{k,m}^{GG}V^*$, the quotient map being given by evaluating jet differentials tangent to $V$ only on $k$-jets tangent to $W$. 

\begin{remark}\label{vanishing}
In particular, we see that if every global section of $E_{k,m}^{GG}W^*$ vanishes at some point $j_k\in J_kW\subset J_kV$, then so every global section of $E_{k,m}^{GG}V^*$ does.
\end{remark}

Now, considering the highest monomials (with respect to the reverse lexicographic order) gives a natural filtration on weighted homogeneous polynomials. Such filtration defines an intrinsic filtration on the bundles $E_{k,m}^{GG}V^*$, whose graded series are given by
$$
\operatorname{Gr}^{\bullet}E_{k,m}^{GG}V^*=\bigoplus_{\ell_1+2\ell_2+\cdots+k\ell_k=m}S^{\ell_1}V^*\otimes\cdots\otimes S^{\ell_k}V^*.
$$
Therefore, it follows that if for all $k$-tuple of non negative integers $(\ell_1,\dots,\ell_k)$ such that $\ell_1+2\ell_2+\cdots+k\ell_k=m$, we have
$$
H^0(X,S^{\ell_1}V^*\otimes\cdots\otimes S^{\ell_k}V^*)=0,
$$
then
$$
H^0(X,E_{k,m}^{GG}V^*)=0,
$$
as well. 

Before entering into the proof of Theorem \ref{main}, let us show as promised that the locus $GG_A(X,V)$ is independent of the ample line bundle $A\to X$.

\begin{lemma}\label{ind}
The set $GG_A(X,V)$ does not depend on the ample line bundle $A\to X$.
\end{lemma}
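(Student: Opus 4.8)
The plan is to show the two sets $GG_A(X,V)$ and $GG_B(X,V)$ coincide for any two ample line bundles $A,B\to X$, and by symmetry it suffices to prove one inclusion, say $GG_B(X,V)\subset GG_A(X,V)$. The key observation is that the defining condition at a point $x$ is a statement about \emph{all} orders $k$ and \emph{all} weighted degrees $m$ simultaneously, so there is a lot of room to absorb a fixed twist. Concretely, fix $x\in GG_B(X,V)$ and an integer $k>0$; I will produce the required $k$-jet $\varphi_k$ adapted to $A$. The idea is to first pass to a high enough order $k'\ge k$ and use the $k'$-jet coming from membership in $GG_B(X,V)$, then truncate.

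First I would reduce to a cleaner algebraic formulation: since $A$ and $B$ are ample, there is an integer $p>0$ such that $A^{\otimes p}\otimes B^{-1}$ is (very) ample, hence effective, so we may fix a nonzero section $s\in H^0(X, A^{\otimes p}\otimes B^{-1})$. Multiplication by $s$ gives injections $H^0(X,E_{k,m}^{GG}V^*\otimes A^{-1})\hookrightarrow H^0(X,E_{k,m}^{GG}V^*\otimes B^{-1}\otimes A^{p-1})$; more usefully, for the purposes of vanishing on a jet, a section $P\in H^0(X,E_{k,m}^{GG}V^*\otimes A^{-1})$ vanishes on a $k$-jet $\varphi_k$ based at a point $x$ where $s(x)\neq 0$ if and only if $sP$, viewed in $H^0(X, E_{k,m}^{GG}V^*\otimes B^{-1}\otimes A^{p-1})$, vanishes on $\varphi_k$. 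So away from the divisor $\{s=0\}$ the two vanishing conditions are interchangeable up to a harmless twist by powers of $A$; and such powers are themselves controlled by further sections of $A$ vanishing at prescribed points.

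The heart of the argument is then the following: given $x\in GG_B(X,V)$, I want a $k$-jet $\varphi_k$ at $x$ killing every element of $H^0(X,E_{k,m}^{GG}V^*\otimes A^{-1})$ for all $m$. The trick is a rescaling/reparametrization argument on jets. Take $N$ large. By hypothesis there is an $N$-jet $\psi_N$ at $x$ tangent to $V$ annihilating all sections of $E_{N,m}^{GG}V^*\otimes B^{-1}$ for all $m$. Given $P\in H^0(X,E_{k,m}^{GG}V^*\otimes A^{-1})$, pull it up to order $N$ (it becomes a section of $E_{N,m}^{GG}V^*\otimes A^{-1}$), and observe that a suitable section of $A^{\otimes(p-1)}$ or of $A^{\otimes p}\otimes B^{-1}$ multiplies it into something of the form (section vanishing at order $m$ on jets) times (holomorphic function nonzero near $x$ after an appropriate choice); the factor $B^{-1}$ produced this way lets us invoke the annihilation by $\psi_N$. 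Taking $\varphi_k$ to be the $k$-truncation of $\psi_N$ then does the job, because truncation is compatible with the evaluation pairing and with the quotient maps $E_{N,m}^{GG}V^*\to E_{k,m}^{GG}V^*$. Running the symmetric argument exchanging $A$ and $B$ gives the reverse inclusion.

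The main obstacle I anticipate is bookkeeping the twist coherently across \emph{all} weighted degrees $m$ at once: the degrees of the auxiliary sections of $A$ needed to convert an $A^{-1}$-twist into a $B^{-1}$-twist must not depend on $m$ (they don't, since the twist $A^{\otimes p}\otimes B^{-1}$ is fixed), but one must be careful that the auxiliary section does not vanish at the base point $x$, or else handle that degeneracy. This can be arranged by choosing the section of $A^{\otimes p}\otimes B^{-1}$ generically, or, if $x$ lies on its base locus, by perturbing with another globally generated twist; alternatively one avoids the issue entirely by phrasing everything in terms of the graded pieces $S^{\ell_1}V^*\otimes\cdots\otimes S^{\ell_k}V^*$ and noting the twist $A^{-1}$ versus $B^{-1}$ only shifts which tensor powers of an ample bundle appear, which does not affect whether a \emph{nonzero} section can vanish identically on a fixed jet of sufficiently high order. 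I expect the clean write-up to isolate this as the one real point and dispatch the rest as formal consequences of functoriality of jet bundles under truncation.
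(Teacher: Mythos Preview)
There is a real gap in your argument. After multiplying $P\in H^0(X,E_{k,m}^{GG}V^*\otimes A^{-1})$ by your section $s\in H^0(X,A^{\otimes p}\otimes B^{-1})$, you land in $H^0(X,E_{k,m}^{GG}V^*\otimes A^{p-1}\otimes B^{-1})$, not in $H^0(X,E_{k,m}^{GG}V^*\otimes B^{-1})$. The residual positive factor $A^{p-1}$ is not ``harmless'': the hypothesis $x\in GG_B(X,V)$ only tells you that $\psi_N$ annihilates sections twisted by $B^{-1}$, not sections twisted by $A^{p-1}\otimes B^{-1}$. Your proposed fixes do not remove this twist---multiplying by further sections of positive line bundles only pushes the discrepancy in the wrong direction, and rewriting in terms of the graded pieces $S^{\ell_1}V^*\otimes\cdots\otimes S^{\ell_k}V^*$ carries exactly the same twist problem.

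The missing device, which is the one substantive step in the paper's proof, is to take a \emph{power} of the jet differential before multiplying. Since jet differentials multiply, $P^p\in H^0(X,E_{k,pm}^{GG}V^*\otimes A^{-p})$; now $s\cdot P^p$ lies precisely in $H^0(X,E_{k,pm}^{GG}V^*\otimes B^{-1})$, and the hypothesis on $\psi_k$ applies directly. If $s(x)\ne 0$ (arrangeable since $A^{\otimes p}\otimes B^{-1}$ is globally generated), then $(s\cdot P^p)(\psi_k)=0$ forces $P(\psi_k)=0$. With this in hand your $N$-versus-$k$ truncation apparatus is unnecessary: one may simply take $\varphi_k=\psi_k$. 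The paper runs the equivalent contrapositive (starting from $x\notin GG_B$ and producing a nonvanishing section with values in $A^{-1}$), which is slightly cleaner still.
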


\begin{proof}
Let $A,B\to X$ two ample line bundles and $\ell>0$ be a positive integer such that $B^{\otimes\ell}\otimes A^{-1}$ is globally generated. We shall show that 
$$
GG_A(X,V)\subset GG_B(X,V),
$$
and the equality will follow by interchanging the roles of $A$ and $B$. 

Let $x\not\in GG_B(X,V)$. Then, there exist integers $k,m>0$, a global section $P\in H^0(X,E_{k,m}^{GG}V^*\otimes B^{-1})$ and a $k$-jet of holomorphic curve $\varphi_k\colon(\mathbb C,0)\to (X,x)$ tangent to $V$ such that $P(\varphi_k)\ne 0$. Let $\sigma\in H^0(X,B^{\otimes\ell}\otimes A^{-1})$ be a global section such that $\sigma(x)\ne 0$. Then, 
$$
P^\ell\otimes\sigma\in H^0(X,E_{k,\ell m}^{GG}V^*\otimes A^{-1})
$$
and 
$$
(P^\ell\otimes\sigma)(\varphi_k)=P^\ell(\varphi_k)\cdot\sigma(x)\ne 0,
$$
so that $x\not\in GG_A(X,V)$.
\end{proof}

\begin{proof}[Proof of Theorem \ref{main}]
Let us first suppose that $\mathcal W=\mathcal O_X(W)$ is the sheaf of germs of holomorphic sections of a holomorphic subbundle $W\subset V$. Let $A\to X$ be any ample line bundle and $k,m>0$ be two integers. For all $k$-tuple of non-negative integers $(\ell_1,\dots,\ell_k)$ such that $\ell_1+2\ell_2+\cdots+k\ell_k=m$ we have 
$$
S^{\ell_1}W^*\otimes\cdots\otimes S^{\ell_k}W^*\otimes A^{-1}\subset (W^*)^{\otimes |\ell|}\otimes A^{-1},
$$ 
where $|\ell|=\sum\ell_j$. Thus, 
$$
H^0(X,S^{\ell_1}W^*\otimes\cdots\otimes S^{\ell_k}W^*\otimes A^{-1})=0
$$
and, since taking the tensor product of $E_{k,m}W^*$ with $A^{-1}$ affects the corresponding graded bundle just tensoring by $A^{-1}$, we have
$$
H^0(X,E_{k,m}^{GG}W^*\otimes A^{-1})=0.
$$
Moreover, we have of course surjective morphisms
$$
E_{k,m}^{GG}V^*\otimes A^{-1}\to E_{k,m}^{GG}W^*\otimes A^{-1}.
$$
Therefore, by Remark \ref{vanishing}, in such a situation every global jet differential tangent to $V$ of order $k$ and weighted degree $m$ with values in $A^{-1}$ must vanish when evaluated on $k$-jets which are tangent to $W$. Since at each point $x\in X$ we have such jets, it follows that $GG(X,V)=X$.

Now, take $\mathcal W$ as in the hypotheses. Since $\mathcal W$ is saturated, $\mathcal O_X(V)/\mathcal W$ is torsion-free and thus locally free in codimension two. In other words, there exists a proper subvariety $Y\subsetneq X$, $\operatorname{codim}_X Y\ge 2$, and a holomorphic vector bundle 
$$
W\to U:=X\setminus Y
$$ 
on the dense open subset $U$ such that $\mathcal O_U(W)\simeq\mathcal W|_U$ and $W$ is actually a subbundle of the restriction $V|_{U}$ of $V$ to $U$. Of course, over $U$, we also have $\mathcal W^*|_U\simeq\mathcal O_U(W^*)$. Since $(\mathcal W^*)^{[|\ell|]}$ is reflexive by definition and thus normal, we have a surjection
$$
H^0(X,(\mathcal W^*)^{[|\ell|]}\otimes A^{-1})\to H^0(U,(\mathcal W^*)^{[|\ell|]}\otimes A^{-1})=H^0(U,(W^*)^{\otimes|\ell|}\otimes A^{-1}),
$$
so that $H^0(U,(W^*)^{\otimes|\ell|}\otimes A^{-1})=\{0\}$. From
$$
S^{\ell_1}W^*\otimes\cdots\otimes S^{\ell_k}W^*\otimes A^{-1}|_U\subset (W^*)^{\otimes |\ell|}\otimes A^{-1}|_U
,$$
we conclude that 
$$
H^0(U,\operatorname{Gr}^{\bullet}E_{k,m}^{GG}W^*\otimes A^{-1})=\{0\}
$$
and thus $H^0(U,E_{k,m}^{GG}W^*\otimes A^{-1})=\{0\}$. Then, since each section of $E_{k,m}^{GG}V^*\otimes A^{-1}$ over $U$ extends to a section of $H^0(X,E_{k,m}^{GG}V^*\otimes A^{-1})$, we have $U\subset GG(X,V)$ and since $GG(X,V)$ is a closed set, then $GG(X,V)=X$.
\end{proof}

\subsection{Theorem \ref{main} for Demailly--Semple jets}

In this subsection we would like to give a somehow more geometric proof of (a special case of) Theorem \ref{main} in the case of invariant jet differentials, even if this case is of course included in what is proved above. For the sake of simplicity we shall assume that $L\subset T_X$ is a holomorphic line subbundle and not just an injection of sheaves of arbitrary rank. In this case, the hypothesis on $L$ simply become that $L$ is not big, that is its Kodaira--Iitaka dimension is not maximal. Invariant jet differentials were introduced in \cite{Dem97} as a refined version of Green--Griffiths' ones: they are constructed taking invariant holomorphic functions on the $k$-jet space by a larger group, namely the full group of $k$-jets of biholomorphisms of $(\mathbb C,0)$, instead of merely homotheties. The vector bundle $E_{k,m}V^*\to X$ of invariant jet differential of order $k$ and weighted degree $m$ acting on germs of holomorphic curves tangent to $V\subset T_X$ can be obtained as a direct image of an invertible sheaf on a tower of projective spaces, as follows.

Start with a directed manifold $(X,V)$ and define the new directed manifold $(\widetilde X,\widetilde V)$ to be
$$
\widetilde X=P(V),\quad\widetilde V=\pi_*^{-1}\bigl(\mathcal O_{P(V)}(-1)\bigr)\subset T_{P(V)},
$$
where $\pi\colon P(V)\to X$ is the projectivized bundle of lines of $V$ and $\mathcal O_{P(V)}(-1)$, which is in turn a subbundle of $\pi^* V$, the tautological line bundle of $P(V)$.
Now, set $(P_0(V),V_0)=(X,V)$ and define inductively 
$$
(P_k(V),V_k)=(\widetilde{P_{k-1}(V)},\widetilde{V_{k-1}})
$$ 
together with the total projection $\pi_{0,k}\colon P_k(V)\to X$. The functoriality of this construction shows that if $(Y,W)\subset (X,V)$ is a directed submanifold (\textsl{i.e.} $Y$ is a smooth submanifold of $X$, possibly the whole $X$, and $W\subset T_Y\subset T_X|_{Y}$ is a holomorphic subbundle of $V|_Y$), then for all positive integers $k$ we have
$$
P_k(W)\subset P_k(V),\quad W_k\subset V_k|_{P_k(W)}\quad\text{and}\quad \mathcal O_{P_k(V)}(-1)|_{P_k(W)}=\mathcal O_{P_k(W)}(-1),
$$
and moreover the projection maps are of course compatible.

If $\varphi\colon (\mathbb C,0)\to X$ is a germ of holomorphic curve tangent to $V$ then we can define a projectivized lifting $\varphi_{[k]}\colon (\mathbb C,0)\to P_k(V)$. Such a lifting is tangent to $V_k$ and satisfies $\pi_{0,k}\circ\varphi_{[k]}=\varphi$.

It turns out that for any positive integer $m$ there is an isomorphism of sheaves
$$
(\pi_{0,k})_*\mathcal O_{P_k(V)}(m)\simeq\mathcal O_X(E_{k,m}V^*).
$$
By (the version in the invariant case \cite{Dem97} of) Theorem \ref{fund}, for any positive line bundle $A\to X$ and for all positive integers $k,m$, we have that 
$$
f_{[k]}(\mathbb C)\subset \operatorname{Bs}\bigl(\mathcal O_{P_k(V)}(m)\otimes\pi_{0,k}^*A^{-1}\bigr),
$$
whenever $f\colon\mathbb C\to X$ is an entire curve tangent to $V$ (here, $\operatorname{Bs}(\bullet)$ stands for base locus of a line bundle).

With this is mind, we define the \emph{Demailly--Semple locus $DS(X,V)$} to be
$$
DS(X,V)=DS_A(X,V),
$$
where
$$
DS_A(X,V)=\bigcap_{k\ge 1}\pi_{0,k}\left(\bigcap_{m\ge 1}\operatorname{Bs}\bigl(\mathcal O_{P_k(V)}(m)\otimes\pi_{0,k}^*A^{-1}\bigr)\setminus P_k(V)^{\textrm{sing}}\right).
$$
Here, $P_k(V)^{\textrm{sing}}$ is the complement of the set of values $\varphi_{[k]}(0)$ reached by all regular (\textsl{i.e.} with non--zero first derivative) germs of curves $\varphi$. 
The fact that $DS_A(X,V)$ is independent of the ample line bundle $A\to X$ can be shown in the same way as in Lemma \ref{ind}.

\begin{remark}
Observe that $P_1(V)^{\textrm{sing}}=\emptyset$, and $P_k(V)^{\textrm{sing}}\subset\operatorname{Bs}\bigl(\mathcal O_{P_k(V)}(m)\bigr)$ for all integer $m$ (see \cite{Dem97}). Moreover, $\pi_{0,k}(P_k(V)^{\textrm{sing}})=X$, for $k\ge 2$, but if $f\colon\mathbb C\to X$ is an entire curve tangent to $V$ such that $f_{[k]}(\mathbb C)\subset P_k(V)^{\textrm{sing}}$, then $f$ is in fact constant. Thus, it is necessary to remove $P_k(V)^{\textrm{sing}}$ from the base loci of the (anti)tautological bundles in order to get a really useful notion of Demailly--Semple locus (compare with the analogous definition in \cite[beginning of \S 13]{Dem97} and \cite[Introduction]{DT10}, where this minor point was not noticed).
\end{remark}

The relation
$$
\operatorname{Exc}(X,V)\subset GG(X,V)\subset DS(X,V)
$$
is immediate from definitions and Theorem \ref{fund}. \emph{We do not know wether $GG(X,V)=DS(X,V)$, for instance, under the natural hypothesis that $\det V^*$ is big.}

\begin{proof}[Proof of Theorem \ref{main} for Demailly--Semple jets]
Fix any ample line bundle $A\to X$. We shall construct for each integer $k>0$ a smooth manifold $X_k\subset P_k(V)$ which projects biholomorphically onto $X$ via $\pi_{0,k}$ and such that for all integers $m>0$ we have
$$
X_k\subset\operatorname{Bs}\bigl(\mathcal O_{P_k(V)}(m)\otimes\pi^*_{0,k}A^{-1}\bigr).
$$  
The starting datum is a directed manifold $(X,V)$ together with a rank one holomorphic subbundle $L\subset V\subset T_X$ such that $\kappa(L^{-1})<\dim X$. We define $X_k$ to be
$$
X_k:=P_k(L)\subset P_k(V).
$$
Since at each step we are always projectivizing a rank one vector bundle, all $X_k$ are isomorphic to the starting $X$, the isomorphism being given by the projections $\pi_{0,k}$. Moreover, since as we have seen
$$
\mathcal O_{P_k(V)}(-1)|_{P_k(L)}=\mathcal O_{P_k(L)}(-1)
$$
and, on the other hand, 
$$
\mathcal O_{P_k(L)}(-1)\simeq \pi_{0,k}^*L\simeq L,
$$
we deduce that the restriction of $\mathcal O_{P_k(V)}(m)\otimes\pi^*_{0,k}A^{-1}$ to $X_k=P_k(L)$ has no non--zero holomorphic sections for all positive integers $m$ by Kodaira's lemma, being isomorphic to $L^{-\otimes m}\otimes A^{-1}$. But then, it follows that the base locus of $\mathcal O_{P_k(V)}(m)\otimes\pi^*_{0,k}A^{-1}$ must necessarily contain $X_k$, and we are done. 
\end{proof}

We want to underline that we have indeed the much stronger property that not only $X_k$ is contained in the base locus but also that the restriction of $\mathcal O_{P_k(V)}(m)\otimes\pi^*_{0,k}A^{-1}$ to $X_k$ has itself no non--zero sections.

\section{Finite coverings and Green--Griffiths locus}

In this section we shall describe the behavior of the Green--Griffiths locus with respect to finite maps.
We begin with the following.

\begin{proposition}\label{finite}
Let $\rho\colon X'\to X$ be a finite surjective morphism of smooth projective manifolds and let $B\subsetneq X$ be its branch locus. Then, there is an inclusion $\rho\bigl(GG(X')\bigr)\subseteq GG(X)\cup B$. 

In particular, if $GG(X')=X'$, then $GG(X)=X$ and if $\rho$ is étale then $\rho\bigl(GG(X')\bigr)\subseteq GG(X)$.
\end{proposition}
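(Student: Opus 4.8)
The plan is to establish the inclusion pointwise and by contraposition: I will show that if $x'\in X'$ is a point with $\rho(x')\notin GG(X)\cup B$, then $x'\notin GG(X')$. Put $x:=\rho(x')$. By Lemma~\ref{ind} the Green--Griffiths loci of $X$ and of $X'$ may be computed with respect to \emph{any} ample line bundles, so I fix an ample line bundle $A\to X$ and set $A':=\rho^*A$, which is again ample since the pull-back of an ample line bundle by a finite morphism is ample.

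Since $x\notin GG_A(X)$, there are integers $k,m>0$, a section $P\in H^0(X,E_{k,m}^{GG}T_X^*\otimes A^{-1})$ and a $k$-jet of holomorphic curve $\varphi_k\colon(\mathbb C,0)\to(X,x)$ with $P(\varphi_k)\ne 0$. Since $x\notin B$, the finite morphism $\rho$ restricts to a biholomorphism from a neighbourhood of $x'$ onto a neighbourhood of $x$; composing a representative germ of $\varphi_k$ with the local inverse produces a germ of holomorphic curve through $x'$ whose $k$-jet $\varphi'_k$ satisfies $\rho\circ\varphi'_k=\varphi_k$.

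The core of the argument is to carry $P$ over to $X'$. Post-composition with $\rho$ defines a bundle morphism $\rho_{\#}\colon J_kT_{X'}\to\rho^*J_kT_X$ sending the $k$-jet of $\psi$ to the $k$-jet of $\rho\circ\psi$ (it is well defined everywhere, in particular over the ramification locus), and it is equivariant for the fibrewise $\mathbb C^*$-actions induced by reparametrization of the source. Consequently precomposition with $\rho_{\#}$ turns a weighted homogeneous polynomial of degree $m$ on the fibres of $J_kT_X$ into one of the same degree on the fibres of $J_kT_{X'}$, that is, it induces a morphism $\rho^*E_{k,m}^{GG}T_X^*\to E_{k,m}^{GG}T_{X'}^*$. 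Tensoring by $A'^{-1}=\rho^*A^{-1}$ and applying the resulting morphism to $\rho^*P$ yields a section $Q\in H^0(X',E_{k,m}^{GG}T_{X'}^*\otimes A'^{-1})$ with $Q(j)=P(\rho_{\#}j)$ for every $k$-jet $j$ over a point of $X'$. In particular $Q(\varphi'_k)=P(\rho\circ\varphi'_k)=P(\varphi_k)\ne 0$, whence $x'\notin GG_{A'}(X')=GG(X')$. This proves $\rho\bigl(GG(X')\bigr)\subseteq GG(X)\cup B$.

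The two remaining assertions then follow at once. If $\rho$ is \'etale, then $B=\emptyset$ and the inclusion reads $\rho\bigl(GG(X')\bigr)\subseteq GG(X)$. If $GG(X')=X'$, then by surjectivity $X=\rho(X')\subseteq GG(X)\cup B$; since $B$ is a proper analytic subset of the connected manifold $X$, the open set $X\setminus B$ is dense, and because $GG(X)$ is closed and contains $X\setminus B$ we conclude $GG(X)=X$. I expect the only delicate point to be the construction of the pull-back morphism on Green--Griffiths jet bundles together with the evaluation identity $Q(j)=P(\rho_{\#}j)$ --- chiefly, checking the $\mathbb C^*$-equivariance of $\rho_{\#}$ so that weighted degrees are preserved --- while the liftability of jets over $X\setminus B$ and the closing topological argument are routine.
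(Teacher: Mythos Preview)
Your construction of the pull-back section $Q=\rho^*P$ and its evaluation identity $Q(j)=P(\rho_\#j)$ is fine and indeed more carefully explained than in the paper. The problem is the quantifier logic surrounding it.

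Recall that $x'\in GG(X')$ means: for \emph{every} $k>0$ there is \emph{some} $k$-jet $\psi_k$ through $x'$ on which all sections vanish. Hence $x'\notin GG(X')$ means: there is \emph{some} $k$ such that for \emph{every} (non-constant) $k$-jet $\psi_k$ through $x'$ one can find $m$ and $Q$ with $Q(\psi_k)\ne 0$. What you prove is only the existence of \emph{one} jet $\varphi'_k$ and \emph{one} section $Q$ with $Q(\varphi'_k)\ne 0$; this does not preclude the existence of some other jet $\psi_k$ through $x'$ (of the same order, or of every order) on which all sections vanish, and hence does not show $x'\notin GG(X')$. The error originates from how you unpack $x\notin GG_A(X)$: you extract a single pair $(P,\varphi_k)$ and then \emph{lift} $\varphi_k$ to $X'$, whereas what is needed is the full statement ``there exists $k_0$ such that for every non-constant $k_0$-jet through $x$ there is a section not vanishing on it''.

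The fix is to reverse the direction of the jet transport, exactly as the paper does: take the integer $k_0$ witnessing $x\notin GG(X)$, let $\psi_{k_0}$ be an \emph{arbitrary} non-constant $k_0$-jet through $x'$, push it forward to the $k_0$-jet $\rho\circ\psi_{k_0}$ through $x$ (still non-constant because $x\notin B$ makes $\rho$ a local biholomorphism near $x'$), find $P$ with $P(\rho\circ\psi_{k_0})\ne 0$, and then use your pull-back $Q=\rho^*P$ to get $Q(\psi_{k_0})\ne 0$. Your discussion of $\rho_\#$ and of the closing topological argument is correct and can be kept unchanged.
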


\begin{proof}
Fix an ample line bundle $A\to X$ and suppose $x\in X\setminus\bigl(GG(X)\cup B\bigr)$. Since $x\not\in GG(X)$, there exists an integer $k_0>0$ such that for all non constant $k_0$-jet $\varphi_{k_0}\colon(\mathbb C,0)\to (X,x)$ there exists a positive integer $m$ and a global holomorphic section $P\in H^0(X,E_{k_0,m}^{GG}T^*_X\otimes A^{-1})$ such that $P(\varphi_{k_0}(0))\ne 0$. Next, let $y\in X'$ be such that $\rho(y)=x$. Since $x\not\in B$, then $y$ is not a ramification point and thus any non constant $k_0$-jet $\psi_{k_0}\colon(\mathbb C,0)\to (X',y)$ gives by composition with $\rho$ a non constant $k_0$-jet $\rho\circ\psi_{k_0}\colon(\mathbb C,0)\to (X,x)$ for which therefore there exists an integer $m>0$ and a global holomorphic section $P\in H^0(X,E_{k_0,m}^{GG}T^*_X\otimes A^{-1})$ such that $P\bigl((\rho\circ\psi_{k_0})(0)\bigr)\ne 0$. Thus, $\rho^*P$ is a global holomorphic section $\rho^*P\in H^0(X',E_{k_0,m}T^*_{X'}\otimes\rho^*A^{-1})$ such that $(\rho^*P)\bigl(\psi_{k_0}(0)\bigr)\ne 0$ (notice that $\rho^*A$ is ample since $\rho$ is a finite morphism), and therefore $y\not\in GG(X')$.

The last assertion is clear since both $GG(X)$ and $B$ are Zariski closed sets.
\end{proof}

\begin{corollary}\label{auto}
Let $\Phi \in \operatorname{Aut}(X)$ be an automorphism. Then, $\Phi(GG(X))=GG(X).$
\end{corollary}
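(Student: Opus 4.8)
The plan is to obtain this as an immediate consequence of Proposition~\ref{finite}. An automorphism $\Phi\in\operatorname{Aut}(X)$ is in particular a finite surjective morphism $X\to X$ of smooth projective manifolds with empty branch locus, hence \'etale. So the first step is to apply the last assertion of Proposition~\ref{finite}, with $\rho=\Phi$ and $X'=X$, to get
$$
\Phi\bigl(GG(X)\bigr)\subseteq GG(X).
$$
The second step is to apply the very same assertion to $\Phi^{-1}$, which is again an \'etale finite surjective endomorphism of $X$, obtaining $\Phi^{-1}\bigl(GG(X)\bigr)\subseteq GG(X)$; pushing this inclusion forward by $\Phi$ gives the reverse containment $GG(X)\subseteq\Phi\bigl(GG(X)\bigr)$. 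Combining the two yields the desired equality $\Phi\bigl(GG(X)\bigr)=GG(X)$.

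I expect no real obstacle here: the only point to verify is that Proposition~\ref{finite} genuinely applies to $\rho=\Phi$, which is clear since $\Phi$ is an isomorphism (in particular finite, surjective, between smooth projective manifolds) and has empty branch locus. If one prefers a self-contained argument, one can instead argue directly from the definition of $GG_A$: given $x\notin GG_{\Phi^*A}(X)$, witnessed by some $k_0$ such that every non-constant $k_0$-jet at $x$ is non-vanishing for some $P\in H^0\bigl(X,E^{GG}_{k_0,m}T_X^*\otimes(\Phi^*A)^{-1}\bigr)$, one transports each $k_0$-jet at $\Phi(x)$ back to $x$ via $\Phi^{-1}$ and pushes $P$ forward via $(\Phi^{-1})^*$ to a section of $H^0\bigl(X,E^{GG}_{k_0,m}T_X^*\otimes A^{-1}\bigr)$ (note $\Phi^*A$ is ample since $\Phi$ is an isomorphism), showing $\Phi(x)\notin GG_A(X)$; the independence of $GG_A$ from the ample line bundle (Lemma~\ref{ind}) then gives $\Phi^{-1}\bigl(GG(X)\bigr)\subseteq GG(X)$, and symmetrically $\Phi\bigl(GG(X)\bigr)\subseteq GG(X)$, hence equality. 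Either way the statement is essentially a formal consequence of what has already been established.
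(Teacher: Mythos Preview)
Your proposal is correct and matches the paper's approach: the corollary is stated without proof immediately after Proposition~\ref{finite}, and your argument (apply the \'etale case of that proposition to both $\Phi$ and $\Phi^{-1}$) is exactly the intended deduction. The alternative self-contained argument you sketch is also fine but unnecessary here.
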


\begin{remark}\label{relmorph}
Suppose that we have a surjective morphism between two projective manifolds $f\colon Y\to X$, and let $V\subset T_Y$ be a vector subbundle such that $df|_V\colon V\to f^*T_X$ is injective (we just mention the case of vector subbundles and not that of general subsheaves for the sake of simplicity). It is then immediate to check just following exactly the same proof of the proposition above that, more generally, we have $f\bigl(GG(Y,V)\bigr)\subset GG(X)$. This stronger version will be used in Section \ref{locirred}.
\end{remark}

For the converse, we suppose that the morphism is \'etale. We shall see later that this hypothesis is in fact necessary.

\begin{proposition}\label{invIm} 
If $\rho\colon X'\to X$ is a finite \'etale cover of smooth projective manifolds, then $\rho^{-1}\bigl(GG(X)\bigr)\subseteq GG(X')$.

In particular, if $GG(X)=X$, then $GG(X')=X'$.
\end{proposition}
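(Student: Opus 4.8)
The strategy is to transfer jet differentials from $X'$ down to $X$ by a norm construction along $\rho$, after first reducing to the case where $\rho$ is Galois. Fix an ample line bundle $A\to X$; then $\rho^*A$ is ample, so by Lemma~\ref{ind} we may use $\rho^*A$ to detect $GG(X')$. For the reduction, let $\tau=\rho\circ\sigma\colon X''\to X$ be the Galois closure of $\rho$, with $\sigma\colon X''\to X'$ étale; by Proposition~\ref{finite} one has $\sigma\bigl(GG(X'')\bigr)\subseteq GG(X')$, so if we know the statement for the Galois cover $\tau$, then given $x'\notin GG(X')$, picking any $x''\in\sigma^{-1}(x')$ forces $x''\notin GG(X'')$, whence $\rho(x')=\tau(x'')\notin GG(X)$. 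Thus it suffices to prove the equivalent implication ``$x'\notin GG(X')\ \Rightarrow\ \rho(x')\notin GG(X)$'' under the assumption that $\rho$ is Galois, with group $G$ and $|G|=d=\deg\rho$.

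So suppose $\rho$ is Galois and $x'\notin GG(X')$; set $x=\rho(x')$, so that $\rho^{-1}(x)=G\cdot x'=\{x'_1,\dots,x'_d\}$ with the $x'_i$ pairwise distinct. By Corollary~\ref{auto} the set $GG(X')$ is $G$-stable, so every $x'_i$ lies outside $GG(X')$, and in fact a single integer $k$ — the one witnessing $x'_1\notin GG(X')$ — witnesses all the $x'_i$ at once (transport a non-vanishing section at $x'_1$ by a suitable element of $G$). Fix this $k$, let $\varphi\colon(\mathbb C,0)\to(X,x)$ be an arbitrary non-constant $k$-jet, and use that $\rho$, being étale, is a biholomorphism near each $x'_i$ to lift $\varphi$ to non-constant $k$-jets $\psi_i\colon(\mathbb C,0)\to(X',x'_i)$ with $\rho\circ\psi_i=\varphi$. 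For each $i$ there are $m_i>0$ and $Q^{(i)}\in H^0\bigl(X',E_{k,m_i}^{GG}T^*_{X'}\otimes\rho^*A^{-1}\bigr)$ with $Q^{(i)}(\psi_i(0))\ne 0$. Replacing $Q^{(i)}$ by $(Q^{(i)})^{m/m_i}\cdot s_i$, where $m$ is a sufficiently large common multiple of the $m_i$ and $s_i\in H^0\bigl(X',(\rho^*A)^{m/m_i-1}\bigr)$ is chosen not to vanish at any $x'_j$ (possible once $m$ is large enough for these powers of $\rho^*A$ to be globally generated), we may assume all the $Q^{(i)}$ lie in the single space $W:=H^0\bigl(X',E_{k,m}^{GG}T^*_{X'}\otimes\rho^*A^{-1}\bigr)$ and still satisfy $Q^{(i)}(\psi_i(0))\ne 0$. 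Since each condition ``$Q(\psi_i(0))=0$'' cuts out a proper linear subspace of $W$, a generic $Q\in W$ has $Q(\psi_i(0))\ne 0$ for all $i$ simultaneously.

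Now push $Q$ forward. As $\rho$ is étale, $E_{k,m}^{GG}T^*_{X'}\otimes\rho^*A^{-1}=\rho^*\bigl(E_{k,m}^{GG}T^*_X\otimes A^{-1}\bigr)$, so over each point of $X$ the section $Q$ supplies $d$ values in the fibre of $E_{k,m}^{GG}T^*_X\otimes A^{-1}$ (identified through $d\rho$), one per sheet of $\rho$; their product in the graded algebra of jet differentials is symmetric in the sheets and therefore descends to a global section $P=\operatorname{Nm}_\rho(Q)\in H^0\bigl(X,E_{k,dm}^{GG}T^*_X\otimes A^{-d}\bigr)$, characterised by $P(j)=\prod_{\rho(j')=j}Q(j')$ for every $k$-jet $j$ on $X$. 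Evaluating on $\varphi$ then gives $P(\varphi(0))=\prod_{i=1}^d Q(\psi_i(0))\ne 0$. Since $A^{d}$ is ample and $k$ did not depend on $\varphi$, Lemma~\ref{ind} yields $x\notin GG(X)$, as wanted; the ``in particular'' is the special case $GG(X)=X$, for which $\rho^{-1}(GG(X))=X'$.

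I expect the crux to be exactly the coupling forced by the norm: it requires controlling $Q$ along the \emph{whole} fibre $\rho^{-1}(x)$ rather than just at $x'$, which is why one passes to the Galois closure — so that every point of $\rho^{-1}(x)$ avoids $GG(X')$, with a uniform $k$ — and why the genericity step producing a single $Q\in W$ non-vanishing on all of $\psi_1,\dots,\psi_d$ is needed. Making the norm map precise (it is the multiplicative transfer along the finite étale morphism $\rho$, landing in $\operatorname{Sym}^d$ on the base and then composed with the multiplication $\operatorname{Sym}^dE_{k,m}^{GG}T^*_X\to E_{k,dm}^{GG}T^*_X$ of jet differentials) and the weighted-degree bookkeeping are routine but need some care.
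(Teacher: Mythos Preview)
Your proof is correct and follows essentially the same route as the paper. Both arguments reduce to the Galois case, invoke Corollary~\ref{auto} so that the entire fibre $\rho^{-1}(x)$ lies outside $GG(X')$ with a common witnessing order $k$, produce a single section $Q$ non-vanishing on all lifted jets (you argue by genericity in $W$; the paper writes $Q=Q_1+\epsilon_2 Q_2+\cdots$ with small $\epsilon_i$, which is the same hyperplane-avoidance), and then descend multiplicatively---your norm $\operatorname{Nm}_\rho(Q)$ is exactly the paper's $\bigotimes_{\Phi\in\operatorname{Aut}(\rho)}\Phi^*Q$, since in the Galois case the product over sheets coincides with the product over the $G$-orbit. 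The one place you are actually more careful than the paper is the degree equalisation: the paper silently assumes all $Q_i$ share the same weighted degree $m$ (which can be arranged by taking $Q_i=(\Phi_i^{-1})^*Q_1$), whereas you spell out the adjustment via powers and sections of $(\rho^*A)^{m/m_i-1}$.
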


\begin{proof}
Let us first reduce to the case when the finite \'etale cover is moreover Galois: this is done as follows. We claim that there exists a finite Galois \'etale cover $X''\to X$ which factorizes through $X'$. Once the proposition is known for Galois covers, just apply it to $X''\to X$ and then Proposition \ref{finite} to $X''\to X'$ to get the desired result as follows. 
Let $\mu\colon X''\to X' \to X$ be a Galois cover factorizing through $\nu\colon X'' \to X'$, $x \in GG(X)$ and $z\in\rho^{-1}(x)\subset X'$. Then, there exists $y\in\mu^{-1}(x)\subset X''$ such that $\nu(y)=z$. Since $\mu^{-1}(x) \subset GG(X'')$ then, by Proposition \ref{finite}, $\nu(y) \in GG(X')$ for any $y \in \mu^{-1}(x)$, and thus $z\in GG(X')$.

The claim is a consequence of the correspondence between subgroups of the fundamental group and \'etale covers and the following elementary lemma.

\begin{lemma}
Let $H\le \Gamma$ be a subgroup of finite index in $\Gamma$. Then, there exists a normal subgroup $N\trianglelefteq\Gamma$ of finite index in $\Gamma$ and such that $N$ is contained in $H$.
\end{lemma}

\begin{proof}
The group $\Gamma$ acts on the set of, say, left cosets $G/H$ by permutation. Then, we have a group homomorphism $\Gamma\to\mathfrak S(G/H)$ from $\Gamma$ to the finite group of permutation of $G/H$. It is straightforward to verify that its kernel $N$ has the required properties.
\end{proof}

So, let $\rho\colon X'\to X$ be a finite Galois \'etale cover, say of degree $d$, and let $\operatorname{Aut}(\rho)$ be its deck transformation group, which acts by biholomorphisms freely and transitively on each fiber, by hypothesis. Given an ample line bundle $A\to X$, and any global holomorphic section $Q\in H^0(X',E_{k,m}^{GG}T^*_{X'}\otimes\rho^*A^{-1})$, we can recover, by \lq\lq averaging\rq\rq{} $Q$ with the deck transformations, a global holomorphic section 
$$
\bigotimes_{\Phi\in\operatorname{Aut}(\rho)}\Phi^*Q=:\tilde P\in H^0\bigl(X',E_{k,dm}^{GG}T^*_{X'}\otimes\rho^*A^{-d}\bigr)
$$
invariant by $\operatorname{Aut}(\rho)$, which thus descends to a global holomorphic section $P\in H^0\bigl(X,E_{k,dm}^{GG}T^*_{X}\otimes A^{-d}\bigr)$.

If $y\in X'\setminus GG(X')$, take $k_0>0$ such that for all non constant $k_0$-jet $\psi_{k_0}\colon(\mathbb C,0)\to (X',y)$ there exists an integer $m>0$ and a global holomorphic section $Q\in H^0(X,E_{k_0,m}^{GG}T^*_{X'}\otimes \rho^*A^{-1})$ such that $Q(\psi_{k_0}(0))\ne 0$.

Let $\{y_1:=y,\dots,y_d\}$ be the orbit of $y$ under $\operatorname{Aut}(\rho)$. Then, by Corollary \ref{auto}, $y_i \in X'\setminus GG(X')$ for $i=1,\dots, d$. 

Next, consider a $k_0$-jet $\varphi_{k_0}\colon(\mathbb C,0)\to (X,x)$, where $x=\rho(y)$, and its (unique) lifting $\tilde\varphi^i_{k_0}$ to $X'$ such that $\tilde\varphi^i_{k_0}(0)=y_i$. Thus, we can find a $Q_i\in H^0(X,E_{k_0,m}^{GG}T^*_{X'}\otimes \rho^*A^{-1})$ such that $Q_i(\tilde\varphi^i_{k_0}(0))\ne 0$. Considering sections of the form $Q:=Q_1+\epsilon_2\,Q_2+\dots+\epsilon_d\,Q_d$ we see easily that choosing the scalars $\epsilon_i$ sufficiently small, we obtain a section $Q \in H^0(X,E_{k_0,m}^{GG}T^*_{X'}\otimes \rho^*A^{-1})$ such that $Q(\tilde\varphi^i_{k_0}(0))\ne 0$ for $i=1,\dots,d$. Indeed, by induction on $d$, suppose that $\epsilon_2,\dots,\epsilon_{d-1}$ are chosen in such a way that $S:=Q_1+\epsilon_2\,Q_2+\dots+\epsilon_{d-1}\,Q_{d-1}$ satisfies $S(\tilde\varphi^i_{k_0}(0))\ne 0$ for $i=1,\dots,d-1$. If $S(\tilde\varphi^d_{k_0}(0))\ne 0$ we are done taking $\epsilon_d=0$. Otherwise $S(\tilde\varphi^d_{k_0}(0))= 0$ and we choose $\epsilon_d \neq 0$ sufficiently small so that $S(\tilde\varphi^i_{k_0}(0))+\epsilon_d\,Q_d(\tilde\varphi^i_{k_0}(0))\ne 0$ for $i=1,\dots,d-1$.

Finally, the averaging process above applied to this particular $Q$ then gives a $P\in H^0\bigl(X,E_{k_0,dm}^{GG}T^*_{X}\otimes A^{-d}\bigr)$ such that $P(\varphi_{k_0}(0))\ne 0$, as it is straightforwardly checked, and $x\not\in GG(X)$.
\end{proof}

\begin{remark}\label{rmkGalois}
Moreover, the averaging process above shows that in the Galois situation if $H^0(X',E_{k,m}^{GG}T^*_{X'})\ne 0$ for some $k,m>0$, then $H^0(X,E_{k,dm}^{GG}T^*_{X})\ne 0$ where $d$ is the degree of the covering.
\end{remark}

The next example, together with the remark above, shows that the \'etale assumption is necessary.  
 
\begin{example}
Let $X\subset\mathbb P^{n}$ a smooth hypersurface of degree $d$, cut out by the single equation $P(z_0,\dots,z_n)=0$. Consider the smooth hypersurface $X'\subset\mathbb P^{n+1}$ cut out by the equation $z_{n+1}^d-P(z_0,\dots,z_n)=0$. The map $\pi\colon X'\to\mathbb P^{n}$ given by the projection onto the first $n+1$ coordinates is a branched Galois cover whose automorphism group is $\mathbb Z/d\mathbb Z$ (acting by multiplying $z_{n+1}$ by powers of a primitive $d$th root of unity), ramified along $X$. By \cite{Div09}, we know that if $d$ is large enough, then $H^0(X',E_{k,m}^{GG}T^*_{X'})\ne 0$ for $m\gg k\gg 1$. On the other hand, since $T_{\mathbb P^{n}}$ is ample, for all $k$-tuple of integers $\ell_1,\dots,\ell_k>0$, the graded terms $H^0(\mathbb P^{n},S^{\ell_1}T^*_{\mathbb P^n}\otimes\cdots\otimes S^{\ell_k}T^*_{\mathbb P^n})=0$ and thus $H^0(\mathbb P^n,E^{GG}_{k,m}T^*_{\mathbb P^n})=0$ for all $k,m>0$. 

Let us finally remark that one knows from \cite{DMR10} that generic projective hypersurfaces of high degree $X \subset \mathbb P^{n}$ have $GG(X) \neq X$.
\end{example}
\section{Green--Griffiths locus for product manifolds}\label{sectionproduct}

In this section we will show the following consequence of Theorem \ref{main} (compare with Corollary \ref{introproduct}).

\begin{corollary}\label{product}
Let $X$ be a complex projective manifold and suppose that $X$ splits, up to finite (possibly branched) coverings, into a nontrivial product $Y\times Z$. Then, $GG(X)=X$.
\end{corollary}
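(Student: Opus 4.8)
The plan is to reduce Corollary~\ref{product} to Theorem~\ref{main} by producing, on a suitable finite cover of $X$, a saturated coherent subsheaf of the tangent sheaf whose dual has no positive tensor powers. First I would invoke Proposition~\ref{finite}: since the Green--Griffiths locus of $X$ contains the image under a finite surjective morphism of the Green--Griffiths locus of any finite (possibly branched) cover, it suffices to prove $GG(\widetilde X)=\widetilde X$ for the cover $\widetilde X=Y\times Z$ itself. Indeed, if $\rho\colon Y\times Z\to X$ is the given finite surjective morphism and $GG(Y\times Z)=Y\times Z$, then $\rho(GG(Y\times Z))=X$, hence $GG(X)\cup B=X$; but $GG(X)$ is Zariski closed and $B\subsetneq X$ is a proper subvariety, so $GG(X)=X$. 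Thus we are reduced to the genuine product case, which is exactly Corollary~\ref{introproduct}.

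For the product $X=Y\times Z$ with $\dim Z=:s>0$, the natural candidate is $\mathcal W=\mathcal O_X(V)$ where $V=\operatorname{pr}_Z^*T_Z\subset T_X$ is the relative tangent bundle of the first projection $\operatorname{pr}_Y\colon X\to Y$; this is an honest subbundle (hence saturated), and $V^*=\operatorname{pr}_Z^*T_Z^*$. To apply Theorem~\ref{main} I must check that $H^0(X,(V^*)^{\otimes m}\otimes A^{-1})=0$ for every $m\ge 1$ and some ample $A\to X$. Here $(V^*)^{\otimes m}=\operatorname{pr}_Z^*\bigl((T_Z^*)^{\otimes m}\bigr)$, so the point is that sections of $\operatorname{pr}_Z^*\mathcal F\otimes A^{-1}$ vanish when $A$ is ample on the product: by the projection formula and the Künneth/flat-base-change argument, $H^0(Y\times Z,\operatorname{pr}_Z^*\mathcal F\otimes A^{-1})$ is controlled by $H^0$ of twists of $\mathcal F$ by negative line bundles on $Y$, which vanish since $\dim Y>0$. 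More precisely one chooses $A=\operatorname{pr}_Y^*A_Y\otimes\operatorname{pr}_Z^*A_Z$ with $A_Y,A_Z$ ample; then $\operatorname{pr}_{Y*}\bigl(\operatorname{pr}_Z^*((T_Z^*)^{\otimes m})\otimes A^{-1}\bigr)=A_Y^{-1}\otimes H^0(Z,(T_Z^*)^{\otimes m}\otimes A_Z^{-1})\otimes\mathcal O_Y$, a direct sum of copies of $A_Y^{-1}$, which has no global sections on $Y$ because $A_Y^{-1}$ is the inverse of an ample line bundle and $\dim Y\ge 1$. The section on this excerpt mentions a ``technical elementary lemma about growth rate of sections of tensor powers of pull-back of vector bundles'', so I would quote that lemma in the slightly more general finite-morphism form it is stated in, rather than redo the Künneth computation by hand.

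The main obstacle is purely bookkeeping of the branched-cover reduction: one must make sure that the finite cover in the hypothesis really does land in the scope of Proposition~\ref{finite} (smoothness of $Y\times Z$ is automatic, and finite surjective suffices), and that the nontriviality of the product, $\dim Y\ge 1$ \emph{and} $\dim Z\ge 1$, is used exactly where it is needed --- namely to guarantee both that $V\ne 0$ (so $\mathcal W$ is a nonzero subsheaf) and that $A_Y^{-1}$ is non-effective. Once these are in place the conclusion is immediate: Theorem~\ref{main} gives $GG(Y\times Z)=Y\times Z$, and Proposition~\ref{finite} propagates it to $GG(X)=X$. If one wants the sharper statement allowing $X$ itself to be only isogenous to a product, no extra work is needed, since the branched-cover case of Proposition~\ref{finite} was designed precisely for this.
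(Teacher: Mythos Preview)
Your proof is correct and follows the same overall architecture as the paper: reduce to the product via Proposition~\ref{finite}, then exhibit the pull-back of the (co)tangent bundle of one factor as the subsheaf $\mathcal W$ in Theorem~\ref{main}.

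The one genuine difference is in how you establish the vanishing $H^0\bigl(Y\times Z,\operatorname{pr}_Z^*(T_Z^*)^{\otimes m}\otimes A^{-1}\bigr)=0$. The paper proves this for an \emph{arbitrary} ample $A$ via a growth-rate argument (Lemmata~\ref{growth} and~\ref{pullback}): if the space were nonzero, $\mathcal O_E(1)$ on $\mathbb P(\operatorname{pr}_Z^*T_Z^*)$ would be big, contradicting the dimension count for pull-backs. Your K\"unneth/projection-formula argument with the specific choice $A=\operatorname{pr}_Y^*A_Y\otimes\operatorname{pr}_Z^*A_Z$ is more elementary and perfectly adequate, since Theorem~\ref{main} only asks for \emph{some} ample line bundle; the paper's lemma is stronger in that it applies to any surjective morphism with connected fibers and any ample $A$, but this extra generality is not needed here. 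One small slip: the ``slightly more general finite-morphism form'' you allude to is Corollary~\ref{product} itself (the passage from $Y\times Z$ to $X$), not the vanishing lemma---Lemma~\ref{pullback} is about fibrations with positive-dimensional connected fibers, not finite maps.
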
 

To prove Corollary \ref{product}, we will need the following two elementary lemmata.

\begin{lemma}\label{growth}
Let $E\to X$ be a rank $r$ holomorphic vector bundle on a projective manifold $X$ of dimension $n$, and let $A\to X$ be an ample line bundle. If, for some integer $m_0>0$, $H^0(X,S^{m_0}E\otimes A^{-1})\ne 0$, then there exists a constant $C>0$ such that for all sufficiently divisible integers $m>0$ we have
$$
C^{-1}\,m^{n+r-1}\le h^0(X,S^mE)\le C\, m^{n+r-1}.
$$
Moreover, this is the maximum possible growth rate for the dimension of the spaces of global sections of symmetric powers of $E$.
\end{lemma}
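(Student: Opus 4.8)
The plan is to transfer everything to the projectivised bundle. Write $\pi\colon P(E)\to X$ for the projective bundle in the $\operatorname{Proj}(\operatorname{Sym}E)$ convention, so that $\dim P(E)=n+r-1$, $\pi_*\mathcal O_{P(E)}(m)=S^mE$ for $m\ge 0$, and $R^q\pi_*\mathcal O_{P(E)}(m)=0$ for $q>0$, $m\ge 0$ (cohomology of the $\mathbb P^{r-1}$-fibres). By the Leray spectral sequence this yields $H^0(X,S^mE)=H^0\bigl(P(E),\mathcal O_{P(E)}(m)\bigr)$ for all $m\ge 0$, and by the projection formula the hypothesis $H^0(X,S^{m_0}E\otimes A^{-1})\ne 0$ translates into the statement that $\mathcal O_{P(E)}(m_0)\otimes\pi^*A^{-1}$ is effective on $P(E)$. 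Hence the whole lemma reduces to estimating $h^0\bigl(P(E),\mathcal O_{P(E)}(m)^{\otimes ?}\bigr)$ of powers of the tautological bundle $\mathcal O_{P(E)}(1)$ on the $(n+r-1)$-dimensional projective manifold $P(E)$.

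For the upper bound — together with the ``maximum possible growth rate'' assertion, which uses no hypothesis on $E$ — I would invoke the elementary fact that on any projective variety $Y$ of dimension $N$ and for any line bundle $\mathcal L$ one has $h^0(Y,\mathcal L^{\otimes m})\le C\,m^N$. To see it, write $\mathcal L=\mathcal H_1\otimes\mathcal H_2^{-1}$ with $\mathcal H_1,\mathcal H_2$ very ample, multiply by a nonzero section of $\mathcal H_2^{\otimes m}$ to get an injection $H^0(Y,\mathcal L^{\otimes m})\hookrightarrow H^0(Y,\mathcal H_1^{\otimes m})$, and note that $h^0(Y,\mathcal H_1^{\otimes m})=\chi(Y,\mathcal H_1^{\otimes m})$ for $m\gg 0$ by Serre vanishing, which is a polynomial in $m$ of degree $N$. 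Applying this with $Y=P(E)$, $\mathcal L=\mathcal O_{P(E)}(1)$, $N=n+r-1$ gives $h^0(X,S^mE)\le C\,m^{n+r-1}$ for all $m>0$ (enlarging $C$ to absorb the finitely many small values), and since this bound holds for \emph{every} $E$, the exponent $n+r-1$ is indeed maximal.

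For the lower bound I would show that $\mathcal O_{P(E)}(1)$ is \emph{big} and use that bigness of a line bundle $M$ on an $N$-dimensional projective variety is equivalent to $h^0(M^{\otimes m})\ge C^{-1}m^{N}$ for all sufficiently divisible $m$. Recall that $\mathcal O_{P(E)}(1)\otimes\pi^*A^{\otimes c}$ is ample on $P(E)$ for $c\gg 0$, because $\mathcal O_{P(E)}(1)$ is $\pi$-ample and $A$ is ample on $X$. Fix such a $c$; since $\mathcal O_{P(E)}(m_0)\otimes\pi^*A^{-1}$ is effective, so is its $c$-th tensor power, and therefore, setting $a:=m_0c+1$,
$$
\mathcal O_{P(E)}(a)=\bigl(\mathcal O_{P(E)}(1)\otimes\pi^*A^{\otimes c}\bigr)\otimes\bigl(\mathcal O_{P(E)}(m_0c)\otimes\pi^*A^{-c}\bigr)
$$
writes $\mathcal O_{P(E)}(a)$ as an ample bundle tensored with an effective one. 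By Kodaira's lemma $\mathcal O_{P(E)}(1)$ is big, whence $h^0\bigl(P(E),\mathcal O_{P(E)}(m)\bigr)\ge C^{-1}m^{n+r-1}$ for all sufficiently divisible (large) $m$, i.e.\ $h^0(X,S^mE)\ge C^{-1}m^{n+r-1}$. Combining with the upper bound finishes the proof.

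I do not expect a genuine obstacle: this is routine projective-bundle and bigness technology. The only points that deserve care are fixing the $\operatorname{Proj}$ convention so that $\pi_*\mathcal O_{P(E)}(m)=S^mE$ (and keeping track of it when translating the hypothesis), and the appeal to Kodaira's lemma, which rests on the standard ampleness of $\mathcal O_{P(E)}(1)\otimes\pi^*A^{\otimes c}$ for $c$ large and can in any case be made explicit by multiplying sections as in the upper-bound step.
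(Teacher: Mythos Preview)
Your proof is correct and follows essentially the same route as the paper: pass to the projectivised bundle, use the projection formula to translate the hypothesis into effectivity of $\mathcal O_{P(E)}(m_0)\otimes\pi^*A^{-1}$, and then show $\mathcal O_{P(E)}(1)$ is big by writing a suitable power as (ample)$\otimes$(effective). Your decomposition is in fact slightly cleaner than the paper's, which introduces an auxiliary ample line bundle $F$ on $P(E)$ to invoke Kodaira's lemma in the form ``some power minus ample is effective'', whereas you go straight to ample-plus-effective; otherwise the two arguments are identical.
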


\begin{proof}
Let $\pi\colon\mathbb P(E)\to X$ the projective bundle of hyperplanes of $E$ and $\mathcal O_E(1)\to\mathbb P(E)$ the tautological quotient line bundle associated to $E$. We have that $\dim \mathbb P(E)=n+r-1$ and, using the projection formula, it is well known that 
\begin{equation}\label{projectionformula}
H^0\bigl(\mathbb P(E),\mathcal O_E(m)\otimes\pi^*G\bigr)\simeq H^0\bigl(X,S^mE\otimes G\bigr),
\end{equation}
for any line bundle $G\to X$. Since $\mathcal O_E(1)$ is $\pi$-ample, there exists a positive integer $\ell_0$ such that $\mathcal O_E(1)\otimes\pi^*A^{\ell_0}$ is ample. On the other hand, by hypothesis and (\ref{projectionformula}), we have that $\mathcal O_E(m_0)\otimes\pi^*A^{-1}$ is effective. Now, fix any ample line bundle $F\to\mathbb P(E)$ and let $k_0$ be a positive integer such that $\mathcal O_E(k_0)\otimes\pi^*A^{\ell_0k_0}\otimes F^{-1}$ is effective. Then,
$$
\mathcal O_E\bigl(k_0(m_0\ell_0+1)\bigr)\otimes F^{-1}\simeq\mathcal O_E(m_0\ell_0k_0)\otimes\pi^*A^{-\ell_0k_0}\otimes\mathcal O_E(k_0)\otimes\pi^*A^{\ell_0k_0}\otimes F^{-1}
$$
is effective.

Thus, by Kodaira's lemma, $\mathcal O_E(1)$ is big and there exists a constant $C>0$ such that for all sufficient divisible integers $m>0$ we have
$$
C^{-1}\,m^{n+r-1}\le h^0\bigl(\mathbb P(E),\mathcal O_E(m)\bigr)\le C\, m^{n+r-1},
$$
and this is the maximum possible growth rate of the dimension of the spaces of global sections of powers of $\mathcal O_E(1)$. Thanks to (\ref{projectionformula}), the same holds for symmetric powers of $E$ on $X$.
\end{proof}

\begin{remark}
It is a standard fact that for \lq\lq sufficiently divisible integer $m$\rq\rq{} it is enough to read \lq\lq such that $m$ is large enough and $S^mE$ has at least a non zero global section\rq\rq{}.
\end{remark}

\begin{lemma}\label{pullback}
Let $f\colon X\to Y$ be a surjective morphism with connected fibers of smooth projective manifolds and suppose that $n=\dim X>\dim Y=m$. If $E\to Y$ is any holomorphic vector bundle, then for any ample line bundle $A\to X$ we have $H^0(X,f^*E\otimes A^{-1})=0$. 
\end{lemma}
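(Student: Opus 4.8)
The plan is to restrict a hypothetical nonzero section to a general fibre of $f$ and to derive a contradiction from the ampleness of $A$. So I would start with $\sigma\in H^0(X,f^*E\otimes A^{-1})$, suppose $\sigma\ne 0$, and use generic smoothness (we are in characteristic zero) to produce a nonempty Zariski open $U\subset Y$ such that for every $y\in U$ the fibre $F_y:=f^{-1}(y)$ is a smooth connected projective manifold of dimension $n-m$. Here the hypothesis $n>m$ is exactly what guarantees $\dim F_y\ge 1$, and it is precisely at this point that positivity of $A$ will be exploited.

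The key observation is that $f^*E$ is trivial along each fibre: since $f|_{F_y}$ is the constant map with value $y$, one has $f^*E|_{F_y}\simeq E_y\otimes\mathcal O_{F_y}$, a trivial vector bundle of rank $r=\operatorname{rk}E$. Consequently
$$
H^0\bigl(F_y,(f^*E\otimes A^{-1})|_{F_y}\bigr)\simeq H^0\bigl(F_y,(A^{-1}|_{F_y})^{\oplus r}\bigr)=H^0\bigl(F_y,A^{-1}|_{F_y}\bigr)^{\oplus r}.
$$
Now $A|_{F_y}$ is ample on the positive-dimensional projective manifold $F_y$, so $A^{-1}|_{F_y}$ carries no nonzero global section, and therefore $\sigma|_{F_y}=0$ for every $y\in U$.

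To conclude, I would observe that $\bigcup_{y\in U}F_y=f^{-1}(U)$ is a dense Zariski open subset of $X$, so $\sigma$ vanishes on a dense set and hence $\sigma\equiv 0$, contradicting $\sigma\ne 0$; this gives $H^0(X,f^*E\otimes A^{-1})=0$. I do not expect a genuine obstacle in this argument: the only mildly delicate point is whether to work with a general (smooth) fibre or with every scheme-theoretic fibre. Restricting to a general fibre is the cleanest option, but one could equally argue on all fibres, since a line bundle that is anti-ample on a projective scheme of positive dimension still has no nonzero sections; in either formulation the substance of the proof is the triviality of $f^*E$ on fibres combined with $\dim F_y>0$.
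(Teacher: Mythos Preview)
Your argument is correct and is in fact more direct than the paper's. The paper proceeds by a growth--rate comparison: assuming a nonzero section of $f^*E\otimes A^{-1}$ exists, Lemma~\ref{growth} forces $h^0(X,f^*S^kE)$ to grow like $k^{n+r-1}$, whereas the projection formula $H^0(X,f^*S^kE)\simeq H^0(Y,S^kE)$ (valid because $f_*\mathcal O_X=\mathcal O_Y$, which uses the connected-fibres hypothesis) bounds this by $k^{m+r-1}$, yielding a contradiction since $n>m$. Your route bypasses both Lemma~\ref{growth} and the projection formula entirely: the triviality of $f^*E$ on each fibre together with $\dim F_y\ge 1$ and the ampleness of $A|_{F_y}$ kills the section fibrewise. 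This is shorter and does not even need the fibres to be connected (each irreducible component of a general fibre already has dimension $n-m\ge 1$). The paper's approach, on the other hand, isolates Lemma~\ref{growth} as a reusable statement about bigness of $\mathcal O_E(1)$, which has some independent interest; but for the purpose of proving the present lemma your argument is the cleaner one.
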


\begin{proof}
Suppose that $E$ is of rank $r$. If, by contradiction, $H^0(X,f^*E\otimes A^{-1})\ne0$, then, by Lemma \ref{growth}, there exists a positive constant $C$ such that
$$
C^{-1}\,k^{n+r-1}\le h^0(X,f^*S^kE)\le C\, k^{n+r-1},
$$
for all sufficiently divisible integer $k$. On the other hand, by the projection formula, $H^0(X,f^*V)\simeq H^0(Y,V)$, for any holomorphic vector bundle $V\to Y$. But then, 
$$
C^{-1}\,k^{n+r-1}\le h^0(X,f^*S^kE)=h^0(Y,S^kE)\le B\,k^{m+r-1},
$$
for some positive constant $B$ and all sufficiently divisible $k$, contradiction.
\end{proof}

We are now in a good shape to give a short proof of Corollary \ref{product}.

\begin{proof}[Proof of Corollary \ref{product}]
Let $\rho\colon\tilde X\to X$ be a finite surjective morphism such that $\tilde X\simeq Y\times Z$ and let $A\to\tilde X$ be an ample line bundle. Then, $0<\dim Y<\dim\tilde X$ and $\operatorname{pr}_1^*T_Y$ is a holomorphic subbundle of $T_{\tilde X}$, where $\operatorname{pr}_1\colon\tilde X\to Y$ is the first projection. For any integer $m>0$, by Lemma \ref{pullback} applied to $(T^*_Y)^{\otimes m}$, we have that $H^0\bigl(\tilde X,(\operatorname{pr}_1^*T^*_Y)^{\otimes m}\otimes A^{-1}\bigr)=0$, so that by Theorem \ref{main} we have $GG(\tilde X)=\tilde X$. Therefore, by Proposition \ref{finite}, $GG(X)=X$.
\end{proof}

\subsection*{Examples}
Using the preceding results it is already possible to produce many examples of projective manifolds of general type covered by their Green--Griffiths locus. Let us consider the case of surfaces, although easy generalizations to higher dimensions can be given.

The easiest applications of Corollary \ref{product} is certainly to take $X=C_1\times C_2$ a product of $2$ compact Riemann surfaces of genus $g_i \geq 2$ which gives an example of general type and even hyperbolic, as observed in the Introduction.

Starting from these examples and using Proposition \ref{finite}, one can construct more interesting examples taking quotient by finite groups. 

First, let us consider symmetric product of curves. Given a smooth projective curve $C$ of genus $g$, take $X:=C^{(2)}=C\times C/\mathfrak{S}_2,$ the quotient by the symmetric group. If $g\geq 3$ then $C^{(2)}$ is of general type, hyperbolic \cite{SZ00} and satisfies $GG(X)=X.$ 

One can also consider the algebraic surfaces whose canonical models arise as quotients $X = (C_1\times C_2)/G$ of the product $C_1\times C_2$ of two curves of genera $g_1:=g(C_1),g_2:=g(C_2)\ge 2$, by the action of a finite group $G$ (in other words, $X$ has only rational double points as singularities).
The minimal resolution $S$ of the quotient $X=(C_1\times C_2)/G$ is called a product--quotient surface and has been intensively studied in \cite{BCGP12}. This gives examples of surfaces of general type $S$ such that $GG(S)=S$ which are not hyperbolic whenever the action of $G$ is not free (\textsl{i.e.} whenever $X$ is singular) and can have in some cases finite fundamental group. 

In particular considering the universal cover, one obtains simply connected examples of surfaces of general type covered by their Green--Griffiths locus. Moreover, unlike previous examples, the distribution of rational, elliptic or entire curves on these surfaces seems not to be known.

\section{Green--Griffiths locus of locally reducible quotients of bounded symmetric domains}

For basic results and notation about hermitian symmetric domains, we refer to \cite{Mok89}. Let $\Omega=G_0/K$ be a bounded symmetric domain, where $G_0=\operatorname{Aut}_0(\Omega)$ is the connected component of the identity of the automorphisms group of $\Omega$, and let $\Gamma\subset\operatorname{Aut}(\Omega)$ be a lattice. The quotient $X=\Omega/\Gamma$ is said to be \emph{locally irreducible} (resp. \emph{locally reducible}) if $\Omega$ is irreducible (resp. reducible) as a hermitian symmetric space.

\begin{definition}
We say that $X=\Omega/\Gamma$ is \emph{reducible} if there exists a subgroup $\Gamma_0\subset\Gamma$ of finite index and a decomposition $\Omega\simeq\Omega_1\times\Omega_2$ into a product of bounded symmetric domains $\Omega_1$ and $\Omega_2$ such that $\Gamma_0\simeq\Gamma_1\times\Gamma_2$ and $\Gamma_i\subset\operatorname{Aut}(\Omega_i)$, $i=1,2$. Otherwise, $X$ is said to be \emph{irreducible}. 
\end{definition} 

In particular, a reducible quotient is by definition locally reducible. Geometrically, the reducibility of $X$ means that, up to a finite covering, $X$ can be decomposed isometrically in a non trivial way. We now concentrate on the locally reducible case. 

\begin{theorem}
Let $X$ be a complex projective manifold uniformized by a reducible bounded symmetric domain. Then, $GG(X)=X$.
\end{theorem}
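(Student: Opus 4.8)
The plan is to pass to a finite \'etale cover of $X$ on which the product decomposition of $\Omega$ produces a holomorphic foliation, and then to feed the tangent sheaf of that foliation into Theorem \ref{main}. Fix a nontrivial splitting $\Omega\simeq\Omega_1\times\Omega_2$ into bounded symmetric domains (possible since $\Omega$ is reducible as a hermitian symmetric space), so that $0<\dim\Omega_1<\dim\Omega=\dim X$. Since $X$ is a compact manifold uniformized by $\Omega$, the lattice $\Gamma$ is cocompact and torsion free, and since $\operatorname{Aut}(\Omega_1)\times\operatorname{Aut}(\Omega_2)$ has finite index in $\operatorname{Aut}(\Omega)$, the subgroup $\Gamma_0:=\Gamma\cap\bigl(\operatorname{Aut}(\Omega_1)\times\operatorname{Aut}(\Omega_2)\bigr)$ is again torsion free and cocompact of finite index, so $\rho\colon\widetilde X:=\Omega/\Gamma_0\to X$ is a finite \'etale cover of projective manifolds. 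The integrable holomorphic distribution $\mathcal D\subset T_\Omega$ given fibrewise by $T_{\Omega_1}\times\{0\}$ is invariant under $\operatorname{Aut}(\Omega_1)\times\operatorname{Aut}(\Omega_2)$, hence under $\Gamma_0$, and therefore descends to a holomorphic subbundle $T\mathcal F\subset T_{\widetilde X}$, the tangent bundle of a holomorphic foliation $\mathcal F$ on $\widetilde X$ of rank $\dim\Omega_1<\dim X$. Set $\mathcal W:=\mathcal O_{\widetilde X}(T\mathcal F)$; since $T_{\widetilde X}/T\mathcal F$ is locally free, $\mathcal W$ is saturated in $\mathcal O_{\widetilde X}(T_{\widetilde X})$, and $(\mathcal W^*)^{[m]}=\mathcal O_{\widetilde X}\bigl((\Omega^1_{\mathcal F})^{\otimes m}\bigr)$ for every $m\ge 1$, where $\Omega^1_{\mathcal F}:=(T\mathcal F)^*$.

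The crux is to prove that for some ample $A\to\widetilde X$ one has
$$
H^0\bigl(\widetilde X,(\Omega^1_{\mathcal F})^{\otimes m}\otimes A^{-1}\bigr)=0\qquad\text{for all }m\ge 1,
$$
for then Theorem \ref{main} gives $GG(\widetilde X)=\widetilde X$ and Proposition \ref{finite} yields $GG(X)=X$, as wanted. For this I would use the Bergman metric of $\Omega_1$: being $\operatorname{Aut}(\Omega_1)$-invariant, it induces along $\mathcal D$ a $\Gamma_0$-invariant smooth hermitian metric $h$ on $T\mathcal F$ over $\widetilde X$. In a foliated chart this metric is pulled back from the $\Omega_1$-factor, so the Chern curvature of $(T\mathcal F,h)$ has no component in directions transverse to $\mathcal F$; moreover the Bergman metric of a bounded symmetric domain has Griffiths semi-negative curvature, so the dual metric on $\Omega^1_{\mathcal F}$, and hence on each $(\Omega^1_{\mathcal F})^{\otimes m}$, is Griffiths semi-positive while still having vanishing curvature transversally to $\mathcal F$.

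Suppose then that $s$ is a nonzero global section of $(\Omega^1_{\mathcal F})^{\otimes m}\otimes A^{-1}$. Equip $A$ with a metric of positive curvature $\omega_A$, and pick a point $x_0$ at which $\lvert s\rvert^2$ (measured with the induced metrics) is maximal, so that $s(x_0)\ne 0$. By the standard curvature identity for $i\partial\bar\partial\log\lvert s\rvert^2$ together with the two facts above, evaluating on a direction transverse to $\mathcal F$ at $x_0$ the tensor-power curvature contributes nothing while the $A^{-1}$-factor contributes $\omega_A$, so $i\partial\bar\partial\log\lvert s\rvert^2\ge\omega_A>0$ in that direction, contradicting that the complex Hessian of $\log\lvert s\rvert^2$ at a maximum is $\le 0$; such a transverse direction exists because $\mathcal F$ is a proper subbundle, which is precisely where reducibility (nontriviality of the product) enters. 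I expect this last step --- a Bochner/maximum-principle argument carried out transversally to $\mathcal F$ --- to be the main obstacle: no naive global section-vanishing is available, since $\Omega^1_{\mathcal F}$ is semi-\emph{positive}, the unfavourable sign for a direct curvature estimate, and only the mixed behaviour of the descended Bergman metric --- dual-positive along the leaves, flat transversally --- kills the sections. Everything else (the passage to $\Gamma_0$, the descent of $\mathcal D$ to $\mathcal F$, the identification of $(\mathcal W^*)^{[m]}$, and the concluding use of Proposition \ref{finite}) is routine; note also that, as observed after Theorem \ref{main}, no integrability of $\mathcal W$ is needed, although here $\mathcal F$ happens to be a genuine foliation.
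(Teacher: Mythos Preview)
Your argument is correct and takes a genuinely different route from the paper. The paper splits into two cases. For \emph{reducible} quotients it passes to a finite cover that is an honest product $Y\times Z$ and invokes Corollary~\ref{product}, whose engine is the growth--rate Lemma~\ref{pullback}: $H^0(\tilde X,\operatorname{pr}_1^*E\otimes A^{-1})=0$ because otherwise $h^0(Y,S^kE)$ would grow too fast. For \emph{irreducible} quotients there is no product to project onto; the paper keeps the same subbundle $V_i\subset T_{\tilde X}$ (your $T\mathcal F$), decomposes $(V_i^*)^{\otimes m}$ into irreducible locally homogeneous pieces $W_j$, observes that each $W_j$ has curvature vanishing in the transverse directions and hence is not strictly Griffiths positive, and then appeals to Mok's vanishing theorem (Theorem~\ref{mokvanishing}) ---a deep consequence of Hermitian metric rigidity and Moore ergodicity--- to kill $H^0(X,W_j)$, handling the possible trivial summands separately with the twist by $K_X^{-\ell}$.

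Your maximum--principle argument replaces both mechanisms at once by the single elementary computation
\[
\partial_\xi\bar\partial_{\bar\xi}\,|s|^2 \;=\; |D'_\xi s|^2 \;-\; \bigl\langle \Theta\bigl((\Omega^1_{\mathcal F})^{\otimes m}\otimes A^{-1}\bigr)_{\xi\bar\xi}\,s,\,s\bigr\rangle \;=\; |D'_\xi s|^2 \;+\; \omega_A(\xi,\bar\xi)\,|s|^2 \;>\;0
\]
for $\xi$ transverse to $\mathcal F$ at a maximum of $|s|^2$, using exactly the same transverse--flatness observation that the paper feeds into Mok's theorem. This is uniform in the reducible/irreducible dichotomy (you never need to know whether $\Gamma_0$ itself splits), avoids the decomposition into irreducible $K$--modules, sidesteps the case of trivial summands, and does not invoke metric rigidity. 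Conversely, the paper's route buys more: Lemma~\ref{pullback} applies to arbitrary pull--backs along fibrations, not only those carrying a transversally flat metric, and the appeal to Theorem~\ref{mokvanishing} yields the stronger untwisted vanishing $H^0(X,W_j)=0$ (cf.\ the Remark following the proof), which your argument with the $A^{-1}$ twist does not give. Two small cosmetic points: working with $|s|^2$ rather than $\log|s|^2$ saves you the Cauchy--Schwarz step, and it is slightly cleaner to intersect $\Gamma$ with $\operatorname{Aut}_0(\Omega)$ as the paper does, though your choice is equally valid.
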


\begin{proof} 
The proof splits naturally into two parts: the reducible and the irreducible case. We first treat the reducible case.

\subsubsection*{The case of reducible quotients}

Let $X$ be a reducible quotient. By definition, there exists a decomposition $\Omega\simeq\Omega_1\times\Omega_2$ into a product of bounded symmetric domains and a finite \'etale covering $\tilde X$ of $X$ which decomposes into a product $\tilde X\simeq Y\times Z$ whose factors are uniformized respectively by $\Omega'$ and $\Omega''$. Since $\tilde X$ is a product, then, by Corollary \ref{product}, $GG(X)=X$.

\subsubsection*{The case of irreducible quotients}

Let us treat first the particular case of an irreducible quotient of a polydisk, in order to give the flavor of the proof in a very explicit context.

Fix a co--compact subgroup $\Gamma\subset \operatorname{PSL}(2,\mathbb R)^n$
acting freely and properly discontinuously on $\Delta^n$. Suppose that $\Delta^n/\Gamma$ is irreducible, so that by the so--called Density Lemma \cite[Cor.(5.21) and Thm.(5.22), p.86]{Rag72} the projection
$$
\Gamma\to\operatorname{PSL}(2,\mathbb R)
$$
onto, say, the first factor has dense image $\Gamma_1$ (it is indeed dense onto every factor). Call $z\in\Delta$ the complex coordinate of the first factor and $w=(w_1,\dots,w_{n-1})\in\Delta^{n-1}$ the complex coordinates of the last $n-1$ factors. We begin with a preliminary elementary lemma.

\begin{lemma}[Compare with \cite{She95}]\label{sb}
Let $\eta=f(z,w)\,(dz)^{\otimes m}$ be a symmetric differential of degree $m$ on $\Delta_z\times\Delta_w^{n-1}$. Suppose that $\eta$ is $\Gamma$-invariant. Then, $\eta$ vanishes identically. 
\end{lemma}

\begin{remark}\label{automorphic}
This lemma should be regarded as a consequence of the following classical result for automorphic forms. Let $\Gamma\subset \operatorname{PSL}(2,\mathbb R)^n$ be an irreducible discrete subgroup with compact quotient $\Delta^n/\Gamma$, and $f$ a $\Gamma$-automorphic form of weight $2r$, $r=(r_1,\dots,r_n)\in\mathbb Z^n$. If $r_1\cdots r_n=0$, then $f$ must vanish identically. The same statement holds true for instance in the following more general setting: the quotient $\Delta^n/\Gamma$ is not necessarily compact but $\Gamma$ is commensurable with the Hilbert modular group (for these and related statements see for instance \cite{Fre90}). This latter version will be used in Subsection \ref{siegel}.
\end{remark}

We shall come back about the existence of such groups $\Gamma$ in Subsection \ref{Gamma}, and now we give an elementary proof of the above lemma.

\begin{proof}
Consider the smooth real function given by taking the Poincaré norm of $\eta$:
$$
(z,w)\mapsto |f(z,w)|(1-|z|^2)^{m}.
$$
This is a $\Gamma$-invariant smooth function, thus defined on the compact quotient $X=\Delta^n/\Gamma$. Let $p\in X$ be a point where this function attains its maximum and consider the discrete set of points $\{(z_{(j)},w_{(j)})\}_{j\in J}\in\Delta\times\Delta^{n-1}$ which are in the preimage of $p$ by the quotient map. For each $j\in J$, the holomorphic map defined on the polydisk $\{z_{(j)}\}\times\Delta_w$ by
$$
w\mapsto f(z_{(j)},w)
$$
attains a maximum at the interior point $w_{(j)}$ and so it is constant. So, all the $w_\lambda$-derivatives vanish:
$$
\partial f/\partial w_\lambda (z_{(j)},w)=0,\quad\text{for all $j\in J$, $\lambda=1,\dots,n-1$, and $w\in\Delta^{n-1}$}.
$$ 
By the density of $\Gamma_1$, the set $\{z_{(j)}\}_{j\in J}$ is dense in $\Delta$, so $\partial f/\partial w_\lambda\equiv 0$, $\lambda=1,\dots,n-1$, and $f$ does not depend on $w$. Therefore $\eta$ does only depend on $z$ and can thus be regarded as a symmetric differential of degree $m$ on $\Delta_z$, invariant by the action of $\Gamma_1$ on $\operatorname{Aut}(\Delta_z)$ which has, once again by hypothesis, dense image. But then, $\eta\equiv 0$. 
\end{proof}

Now, consider the projective $n$-dimensional complex manifold $X=\Delta^n/\Gamma$.
The holomorphic foliation by disks $\widetilde{\mathcal F}$ on $\Delta^n$ generated by $\partial/\partial z$ descends to a smooth foliation by curves $\mathcal F$ on $X$. Consider its tangent bundle $T_{\mathcal F}$: it is a rank one holomorphic subbundle of the tangent bundle $T_X$ of $X$.

\begin{proposition}[Compare for instance with \cite{Bru04}]\label{modularfoliation}
The canonical bundle $K_\mathcal F=T_\mathcal F^*$ of $\mathcal F$ has negative Kodaira--Iitaka dimension.
\end{proposition}

\begin{proof}
We have to show that $H^0(X,K_\mathcal F^{\otimes \ell})=\{0\}$ for all integers $\ell>0$. The latter space of global sections identifies canonically with the space of $\Gamma$-invariant global holomorphic sections of $K_{\widetilde{\mathcal F}}^{\otimes\ell}$ over $\Delta^n$. Since $\widetilde{\mathcal F}$ is generated by $\partial/\partial z$, these sections are exactly of the form considered in Lemma \ref{sb}, and therefore they vanish identically. 
\end{proof}

In particular $K_\mathcal F$ is not big and, by Kodaira's lemma, for any ample line bundle $A\to X$ and for any integer $m>0$, we have that $H^0(X,K_\mathcal F^{\otimes m}\otimes A^{-1})=0$, so that Theorem \ref{main} applies.

\smallskip

Now, we pass to the general case. Let $\Omega=\Omega_1\times\cdots\times\Omega_k$, $k\ge 2$, be the decomposition of $\Omega$ into irreducible components. A classical theorem of Cartan (which can be found in \cite[Chap. 5]{Nar71}) states that all automorphisms of $\Omega$ are given by automorphisms of individual irreducible factors and by permutation of isomorphic factors. It follows that for any lattice $\Gamma\subset\operatorname{Aut}(\Omega)$, there is a subgroup $\Gamma_0\subset\Gamma$ of finite index such that $\Gamma_0\subset\operatorname{Aut}_0(\Omega)=\operatorname{Aut}(\Omega_1)\times\cdots\times\operatorname{Aut}(\Omega_k)$. Thanks to Proposition \ref{finite}, we can thus suppose without loss of generality that the fundamental group of $X$, seen as a lattice inside $\operatorname{Aut}(\Omega)$, is contained in the connected component of the identity $\operatorname{Aut}_0(\Omega)$. In particular we can suppose that the action of $\Gamma$ preserves the factors and thus we have a corresponding splitting 
$$
T_X\simeq V_1\oplus\cdots\oplus V_k.
$$
of the tangent bundle of $X$ such that $\pi^* V_i=T_{\Omega_i}$, where $\pi\colon\Omega\to X$ is the quotient map.

Let $K_X$ be the canonical bundle of $X$. Since it is ample there exists an integer $\ell>0$ such that $K_X^{\ell}$ is effective. We shall show that for any $i=1,\dots k$, and for any integer $m>0$, we have that $H^0\bigl(X,(V_i^*)^{\otimes m}\otimes K_X^{-\ell}\bigr)=0$, so that $GG(X)=X$ by Theorem \ref{main} (and in fact, more generally, that $GG(X,V)=X$, for any subbundle $V\subset T_X$ containing one of the $V_i$'s). For $i=1,\dots,k$, let
$$
\Omega_i=G_i/K_i,\quad G_i=\operatorname{Aut}_0(\Omega_i)\quad\text{and}\quad\mathfrak g_i=\mathfrak k_i+\mathfrak m_i,
$$
where $\mathfrak g_i$ and $\mathfrak k_i$ are respectively the Lie algebras of $G_i$ and $K_i$ and $\mathfrak m_i\simeq \mathfrak g_i/\mathfrak k_i=T_{\Omega_i,eK_i}$ (here $eK_i$ is the identity coset).
Next, let 
$$
\varrho_i\colon K_i\to\operatorname{GL}(\mathfrak m_i)
$$ 
be the isotropy representation of $K_i$ on $\mathfrak m_i$, so that the corresponding homogeneous vector bundle is the tangent bundle $T_{\Omega_i}$. The irreducible locally homogeneous vector bundle $V_i\to X=\Omega/\Gamma$ is thus the one associated to the irreducible representation
$$
\begin{aligned}
\sigma_i\colon & K_1\times\dots\times K_i\times\dots\times K_k\to\operatorname{GL}(\mathfrak m_i) \\
& (g_1,\dots,g_i,\dots,g_k)\mapsto\varrho_i(h_i).
\end{aligned}
$$
Now, fix any integer $m>0$ and let $(\sigma_i^*)^{\otimes m}$ be the representation defining $(V_i^*)^{\otimes m}$. Since $K_i$ is compact, all its representations completely decompose into direct sum of irreducible ones, thus:
$$
(\mathfrak m_i^*)^{\otimes m}\simeq E_{1}\oplus\cdots\oplus E_{N}
$$
and we have a corresponding decomposition of $(V_i^*)^{\otimes m}$ into a direct sum of irreducible locally homogeneous vector bundles
$$
(V_i^*)^{\otimes m}\simeq W_{1}\oplus\cdots\oplus W_{N}. 
$$
Now, they all have a natural induced hermitian metric coming from the Bergman metric on $\Omega_i$, and the decomposition is as hermitian vector bundles. Moreover, observe that each of the $W_j$'s comes from a homogeneous vector bundle on $\Omega$ which is in fact a pull-back of a homogeneous vector bundle on $\Omega_i$. Therefore, the Chern curvature of the $W_j$'s is zero whenever evaluated (in its $(1,1)$-form part) on tangent vectors lying on the orthogonal complement of $V_i$. In particular, the $W_j$'s cannot be of strictly positive Chern curvature in the sense of Griffiths. 

To conclude, we use the following deep vanishing theorem which we rephrase slightly. 

\begin{theorem}[See {\cite[Corollary 1' on page 212]{Mok89}}]\label{mokvanishing}
Let $\Omega$ be a bounded symmetric domain of complex dimension $\ge 2$ and $X=\Omega/\Gamma$ be an irreducible quotient of finite volume of $\Omega$ by a torsion free discrete group $\Gamma$ of automorphisms. Suppose $V$ is an irreducible locally homogeneous Hermitian vector bundle on $X$ which is not of strictly positive Chern curvature in the sense of Griffiths. Then, $H^0(X,V)= 0$ unless $V$ is trivial.
\end{theorem}

Now, if $W_j$ is non trivial then, by Theorem \ref{mokvanishing}, $H^0(X,W_j)=0$ and \textsl{a fortiori} $H^0(X,W_j\otimes K_X^{-\ell})=0$ since $K_X^\ell$ is effective. If, on the other hand, $W_j$ is trivial, then 
$$
W_j\otimes K_X^{-\ell}\simeq\bigl(K_X^{-\ell}\bigr)^{\oplus\operatorname{rk}W_j}
$$
and thus $H^0(X,W_j\otimes K_X^{-\ell})\simeq H^0\left(X,\bigl(K_X^{-\ell}\bigr)^{\oplus\operatorname{rk}W_j}\right)=0$, since $K_X$ is ample.
\end{proof}

\begin{remark}
Although we were not able to prove it, we strongly expect that in the proof above the case $W_j$ trivial should not exist. This would give the stronger statement that for any $i=1,\dots k$, and for any integer $m>0$,
$$
H^0\bigl(X,(V_i^*)^{\otimes m}\bigr)=0.
$$
\end{remark}

\subsection{The case of surfaces}

Let us now speculate more on the case of surfaces. In this case $\Omega$ is necessarily the bidisk $\Delta^2$. We begin with a few words about the existence of irreducible co--compact subgroups $\Gamma\subset\operatorname{PSL}(2,\mathbb R)^2$ acting freely and properly discontinuously on $\Delta^2$, which is however quite classical (see for instance in \cite{Sha78}).

\subsubsection{Construction of $\,\Gamma$}\label{Gamma}

Take a quaternion algebra $A$ which is division and whose center is a totally real quadratic number field $k$ (for an excellent reference about quaternion algebras and Fuchsian groups see \cite{Kat92}). Assume that
$$
A\otimes_{\mathbb Q}\mathbb R=M(2,\mathbb R)^2,
$$
that is, $A$ is unramified at the two places corresponding to the two different embeddings of $k$ into $\mathbb R$.
If $\mathfrak D$ is a maximal order in $A$, denote by $\Gamma(1)$ the group of units in $\mathfrak D$ with reduced norm $1$, and identify it with its isomorphic image in $\operatorname{SL}(2,\mathbb R)^2$. Next, call $\Gamma'=\Gamma(1)/\{\pm 1\}$ the image of $\Gamma(1)$ in $\operatorname{PSL}(2,\mathbb R)^2$. Then, it is well known that the action of $\Gamma'$ on $\Delta^2$ is irreducible and properly discontinuous (see \cite{Sha78,Shi94}); moreover, the fact that $A$ is division classically implies that this action is co--compact. 

Now, since the action is co--compact, $\Gamma'$ is a group of finite type. By Selberg's theorem, a finitely generated linear group over a field of zero characteristic is virtually torsion--free, \textsl{i.e.} it has some finite index subgroup $\Gamma$ which is torsion--free. Thus, $\Gamma$ acts freely on $\Delta^2$. Moreover, being of finite index in $\Gamma'$, it is straightforward to see that it is again irreducible and with a properly discontinuous, co--compact action. 

\subsubsection{Surfaces of general type with a holomorphic foliation by curves}

Let $S$ be a surface of general type and $L\subset T_S$ a holomorphic line subbundle, \textsl{i.e.} $S$ is endowed with a smooth holomorphic foliation $\mathcal F$ by curves whose tangent bundle $T_\mathcal F$ is $L$. Suppose that the canonical bundle $K_\mathcal F$ is not big.

\begin{proposition}\label{smooth}
Let $(S,\mathcal F)$ be as above. Then, $S$ is a quotient of the bidisk $\Delta^2$.
\end{proposition}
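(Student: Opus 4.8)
The plan is to read the structure of $(S,\mathcal F)$ off the birational classification of foliated surfaces, due to Miyaoka, Bogomolov, McQuillan and Brunella (see \cite{Bru04}). Write the exact sequence of the (everywhere nonsingular) foliation,
$$
0\longrightarrow T_{\mathcal F}\longrightarrow T_S\longrightarrow N_{\mathcal F}\longrightarrow 0 ,
$$
so that $L=T_{\mathcal F}$, $K_{\mathcal F}=T_{\mathcal F}^{*}$, and $K_S\simeq K_{\mathcal F}\otimes N_{\mathcal F}^{*}$; in particular $N_{\mathcal F}\simeq K_{\mathcal F}\otimes K_S^{-1}$ is completely controlled by $K_{\mathcal F}$, since $K_S$ is big. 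The hypothesis is exactly that $\kappa(\mathcal F):=\kappa(K_{\mathcal F})\le 1$. As $S$ is of general type it is not uniruled, so $\mathcal F$ is not a rational fibration, which is all Brunella's positivity results and leafwise Poincar\'e metric require (the latter being constructed away from the finitely many invariant rational curves, if any); moreover, since the tautological foliations of a bidisk quotient are everywhere nonsingular, once we know the minimal model of $S$ to be such a quotient the smoothness of $\mathcal F$ will force $S$ to be minimal itself.

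I would then run through the possibilities for $\kappa(\mathcal F)\le 1$ on a surface of general type, discarding all but two. The case $\kappa(\mathcal F)=0$ cannot occur, since the relevant foliations (linear or Kronecker foliations on tori, isotrivial elliptic fibrations) force $\kappa(S)\le 1$; the Riccati and turbulent cases are excluded likewise (they make $S$ ruled, respectively $\kappa(S)\le 1$), and a transverse affine structure is excluded because it would force $N_{\mathcal F}$ numerically trivial, hence $\kappa(\mathcal F)=\kappa(K_S)=2$. If $\mathcal F$ is a fibration $f\colon S\to B$, then smoothness of $\mathcal F$ makes every fibre smooth, while $K_{\mathcal F}=K_{S/B}$ not big rules out the non--isotrivial (Kodaira) case; hence $f$ is a smooth isotrivial fibration, so $S$ is finitely covered by a product $F\times B'$ of curves which, being of general type, both have genus $\ge 2$; as $F\simeq\Delta/\Gamma_1$ and $B'\simeq\Delta/\Gamma_2$ the product, and hence $S$, is a quotient of $\Delta^{2}$. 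The only remaining case is that $\mathcal F$ is not a fibration and $\kappa(\mathcal F)\in\{-\infty,1\}$; McQuillan's and Brunella's structure theorems then force $\mathcal F$ to carry an invariant \emph{transverse hyperbolic structure} --- this is exactly where the \emph{modular} foliations of Proposition \ref{modularfoliation} sit. Feeding this transverse hyperbolic structure together with Brunella's leafwise Poincar\'e metric into the local picture, one gets that $(S,\mathcal F)$ is locally isomorphic, foliation included, to $\Delta\times\Delta$ endowed with one of its two tautological foliations by disks; the developing map of this $(\operatorname{PSL}(2,\mathbb R)\times\operatorname{PSL}(2,\mathbb R),\Delta\times\Delta)$--structure is a biholomorphism onto $\Delta^{2}$ (the model being a bounded, hence complete hyperbolic, domain), and therefore it presents the minimal model of $S$ --- hence, by the remark above, $S$ itself --- as $\Delta^{2}/\Gamma$ for a torsion--free cocompact lattice $\Gamma\subset\operatorname{Aut}(\Delta^{2})$, as desired.

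The main obstacle is precisely this last case: extracting from the merely numerical hypothesis that $K_{\mathcal F}$ is not big the genuine geometric datum of an invariant transverse hyperbolic structure on $\mathcal F$, and then promoting it to a developing map onto $\Delta^{2}$. This is the analytic and dynamical core of the argument, and it is where Brunella's theory is indispensable: one needs the leafwise uniformization, the analysis of the closed positive $(1,1)$--current representing $K_{\mathcal F}$ together with the Zariski decomposition $K_{\mathcal F}=P+M$ with $P$ nef and $P^{2}=0$, and the rigidity statement that on a non--uniruled surface a foliation whose canonical class carries no transverse positivity must actually be uniformized. By comparison, the reduction of the foliation singularities under birational modifications, the handling of the finitely many invariant rational curves, and the final descent of the bidisk--quotient property through finite covers (legitimate since, by Cartan's theorem, $\operatorname{Aut}(\Delta^{2})$ already contains all the relevant deck transformations) are comparatively routine.
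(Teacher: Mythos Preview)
Your approach is viable but takes a genuinely different route from the paper's proof. You invoke the full Brunella--McQuillan birational classification of foliated surfaces and then case--split, with the hardest step being the extraction of a transverse hyperbolic structure and the subsequent developing--map argument in the non--fibration case. That last step is essentially the content of Brunella's theorem characterising Hilbert modular foliations, so the machinery you are calling on is substantial.

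The paper's argument is much more direct and avoids the classification entirely. Since $\mathcal F$ is \emph{smooth} on a surface of general type, $K_S$ is ample (Brunella), so by Aubin--Yau $S$ carries a K\"ahler--Einstein metric and $T_S$ is $K_S$--semistable. The semistability inequality $c_1(T_{\mathcal F})\cdot c_1(S)\ge\tfrac12\,c_1(S)^2>0$ combined with the two Baum--Bott identities (which hold on the nose because $\mathcal F$ has no singularities) shows that strict stability would force $c_1(T_{\mathcal F})^2>0$ and hence $K_{\mathcal F}$ big, contrary to hypothesis. Thus $T_S$ is strictly polystable, $T_S\simeq L_1\oplus L_2$, and the Beauville--Yau uniformisation theorem gives $\widetilde S\simeq\Delta\times\Delta$ directly.

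What each approach buys: the paper's proof is short, self--contained, and exploits the smoothness hypothesis in an essential way through Baum--Bott; yours has the merit of fitting the statement into the general classification picture (and indeed the paper itself invokes that classification immediately afterwards to treat the \emph{singular} case), but for the smooth case it is considerable overkill. A minor point: your exclusion of the ``transversely affine'' case is not needed as a separate step --- on a surface of general type the classification already narrows the non--big cases to Hilbert modular ($\kappa(\mathcal F)=-\infty$) or isotrivial fibration of genus $\ge 2$ ($\kappa(\mathcal F)=1$), with $\kappa(\mathcal F)=0$ not occurring.
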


\begin{proof}
Following \cite{Bru97}, the existence of such a smooth foliation on the surface of general type $S$ implies that $K_S$ is ample and so, by Aubin--Yau, $S$ is Kähler--Einstein and hence $T_S$ is $K_S$-semistable. The semistability inequality reads
\begin{equation}\label{semistability}
c_1(T_\mathcal F)\cdot c_1(S)\ge \frac 12\,c_1(S)^2>0.
\end{equation}
Since $\mathcal F$ is smooth, the Baum--Bott formulae give
\begin{equation}\label{BB}
\begin{aligned}
& c_2(S)-c_1(T_\mathcal F)\cdot c_1(S)+c_1(T_\mathcal F)^2=0 \\
& c_1(S)^2-2\,c_1(T_\mathcal F)\cdot c_1(S)+c_1(T_\mathcal F)^2=0.
\end{aligned}
\end{equation}
If $T_S$ is stable, the first inequality in (\ref{semistability}) is strict and using the second of (\ref{BB}) we obtain $c_1(T_\mathcal F)^2>0$. Thus, $T_\mathcal F$ or its dual $K_\mathcal F$ must be big. Since, by (\ref{semistability}), $c_1(K_\mathcal F)\cdot c_1(K_S)>0$, we get that $K_\mathcal F$ is big, contradiction.

Therefore, $T_S$ is polystable but not stable, that is $T_S=L_1\oplus L_2$ is a direct sum of two line bundles and by Beauville--Yau's uniformization theorem \cite{Bea00,Yau93}, the universal cover $\widetilde S$ of $S$ splits as a product of simply connected Riemann surfaces, the decomposition of the tangent bundle lifts and the fundamental group of $S$ acts diagonally on $\widetilde S$. Of course, the only possibility for $\widetilde S$ is to be the product of two disks.
\end{proof}

It is known by \cite{Lu96} that the surfaces as above satisfy the Green--Griffiths conjecture since they verify the Chern numbers inequality $c_1(S)^2-2\,c_2(S)\ge 0$ (to see this just inject the difference of the two identities in (\ref{BB}) into the first inequality in (\ref{semistability})).

Next, assume that $S$ is a smooth projective surface of general type and $\mathcal F$ a (possibly singular, with at most isolated singularities) holomorphic foliation by curves whose canonical bundle $K_\mathcal F$ is not big. By Seidenberg's theorem, we can suppose without loss of generality that $\mathcal F$ has reduced (or canonical) singularities.

Thus, the birational classification of foliations developed by Brunella and McQuillan (see \cite{Bru97,McQ08} and \cite{Bru04}) tells us that $\mathcal F$ is necessarily of the following two types.
\begin{itemize}
\item A Hilbert modular foliation, and thus $S$ is a Hilbert modular surface, if $\kappa(K_\mathcal F)=-\infty$.
\item An isotrivial fibration of genus $\ge 2$, if $\kappa(K_\mathcal F)=1$.
\end{itemize}

This gives the \lq\lq singular\rq\rq{} analogous of the Proposition \ref{smooth}. Then, for instance, Hilbert modular surfaces which are minimal resolution of surfaces with cusps give examples of surfaces $S$ with $c_1(S)^2 < 2\,c_2(S)$ such that $GG(S) = S$. 

Finally, what if $K_\mathcal F$ is big? Here is a natural question which was told to us by M. McQuillan in a private communication.

\begin{question}
Let $S$ be a surface admitting a holomorphic foliation by curves $\mathcal F$ with canonical singularities. Suppose that $K_\mathcal F$ is big. Is it then true that $GG(S,T_\mathcal F) \subsetneq S$? If moreover $S$ is of general type, what can be said about $GG(S)$ in this case?
\end{question}

\section{Green--Griffiths locus of locally irreducible quotients of bounded symmetric domains}\label{locirred}

In this section, we will prove the following theorem.

\begin{theorem}\label{locirredthm}
Let $X$ be a complex projective manifold uniformized by an irreducible bounded symmetric domain of rank greater than or equal to two. Then, $GG(X)=X$.
\end{theorem}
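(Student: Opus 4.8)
The plan is to reduce the problem, in both approaches sketched in the introduction, to the production of a saturated coherent subsheaf $\mathcal{W}\subset\mathcal{O}_X(V)$ for some directed structure $V$ (or more precisely a subsheaf of the tangent sheaf of an auxiliary manifold mapping finitely or submersively onto $X$) whose dual has no twisted sections, so that Theorem \ref{main} together with Remark \ref{relmorph} closes the argument. Since $X$ is uniformized by an irreducible bounded symmetric domain $\Omega$ of rank $\ge 2$, the lattice $\Gamma$ is arithmetic by Margulis, so $X$ is (a connected component of) a Shimura variety; alternatively one uses Mok's theory of characteristic bundles. I would present the characteristic-bundle approach as the main line, since it avoids arithmeticity. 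Concretely, associated to $\Omega$ there is the bundle $\mathcal{S}\to X$ of highest weight tangent vectors (characteristic vectors), a $G_0$-homogeneous subbundle of $\mathbb{P}(T_X)$ whose fibers are the compact duals of minimal rational tangents; because $\operatorname{rank}\Omega\ge 2$ this fiber has positive dimension. On the total space $Z:=\mathcal{S}$ one has a tautological rank-one distribution $L\subset T_Z$ obtained by lifting the minimal disks of $\Omega$ (equivalently, the lines spanned by the characteristic vectors), giving a smooth holomorphic foliation by curves $\mathcal{F}$ on $Z$, whose leaves are the liftings of totally geodesic minimal disks $\Delta\hookrightarrow X$.

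The key step is then to show that this foliation has negative Kodaira dimension, i.e. $H^0(Z,K_{\mathcal F}^{\otimes \ell})=0$ for all $\ell>0$, exactly as in Proposition \ref{modularfoliation}. The point is that a section of $K_{\mathcal F}^{\otimes\ell}$ pulls back, on the universal cover, to a $\Gamma$-invariant holomorphic section of the $\ell$-th power of the conormal to the tautological foliation on the characteristic bundle over $\Omega$; restricting to a single leaf this gives a bounded holomorphic function on $\Delta$ (after normalizing by the leafwise Poincaré metric, whose curvature is pinned by the symmetric-space structure), and the cocompactness of $\Gamma$ forces the corresponding function on the compact quotient to attain its maximum, hence to be constant along the fiber directions by the maximum principle and then constant along the disk directions by the density of the projection of $\Gamma$ (Moore's ergodicity / the density lemma, which applies since $\Omega$ is irreducible). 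Invariance under a dense subgroup of $\operatorname{Aut}(\Delta)$ then kills it. Once this vanishing is established, $K_{\mathcal F}=L^*$ is not big; by Kodaira's lemma, for any ample $A\to Z$ and any $m>0$ we get $H^0(Z,L^{*\otimes m}\otimes A^{-1})=0$, so Theorem \ref{main} gives $GG(Z,L)=Z$; finally the projection $\pi\colon Z\to X$ is a submersion with $d\pi|_L$ injective (the lift of a minimal disk is immersed and maps to a minimal disk), so Remark \ref{relmorph} yields $\pi\bigl(GG(Z,L)\bigr)\subset GG(X)$, whence $GG(X)=\pi(Z)=X$.

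For completeness I would also record the alternative arithmetic route: using that $X$ is a connected Shimura variety, one embeds it (after a finite cover, harmless by Proposition \ref{invIm}) into a product of simpler Shimura varieties or passes to a suitable sub-Shimura datum corresponding to a totally geodesic holomorphically embedded polydisk $\Delta^r\hookrightarrow \Omega$ with $r=\operatorname{rank}\Omega\ge 2$; this produces, on an auxiliary finite cover carrying the relevant family, a subsheaf analogous to the one in the locally reducible case, and the argument of the polydisk lemma (Lemma \ref{sb}, in its non-cocompact Hilbert-modular-type version mentioned in Remark \ref{automorphic}) applies verbatim.

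\textbf{Main obstacle.} The delicate point is the vanishing $\kappa(K_{\mathcal F})=-\infty$ on the characteristic bundle: unlike the polydisk case there is no global product structure, so one must genuinely use the geometry of minimal rational tangents — that the characteristic foliation is the one tangent to minimal disks, that these disks carry a metric of constant negative leafwise curvature induced by the Bergman metric, and that the $\Gamma$-action on the characteristic bundle over $\Omega$ is minimal enough (via irreducibility and Moore ergodicity) for the maximum-principle argument to propagate a leafwise-constant invariant section to an everywhere-constant, hence vanishing, one. Handling the fiber directions of $\mathcal{S}\to X$ in the maximum-principle step, and making precise the metric normalization that turns a section of $K_{\mathcal F}^{\otimes\ell}$ into a bounded leafwise function, is where the real work lies; everything else is a formal consequence of Theorem \ref{main}, Remark \ref{relmorph}, and Kodaira's lemma.
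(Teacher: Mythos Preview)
Your overall architecture is exactly the paper's: pass to the characteristic bundle $\mathcal S\subset P(T_X)$, use the tautological foliation $\mathcal M$ by lifted minimal disks, show $\kappa(K_{\mathcal M})=-\infty$, apply Theorem~\ref{main} to get $GG(\mathcal S,T_{\mathcal M})=\mathcal S$, then push down via Remark~\ref{relmorph}. You also correctly identify $K_{\mathcal M}\simeq\mathcal O_{P(T_X)}(1)|_{\mathcal S}$ implicitly, and Moore ergodicity as the crucial input.

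Where your sketch diverges from the paper, and where there is a genuine gap, is the endgame of the vanishing. You propose to run the Lemma~\ref{sb} argument: take the pointwise norm, attain a maximum, get constancy along fibers by the maximum principle, then kill the section because it becomes invariant under a dense subgroup of $\operatorname{Aut}(\Delta)$. That last step does not go through here. In the polydisc case the projection of $\Gamma$ to one factor is dense in $\operatorname{PSL}(2,\mathbb R)$, so a leafwise $(dz)^{\otimes m}$ invariant under it must vanish. For an irreducible $\Omega$ of rank $\ge 2$, a single minimal disk has no such dense stabilizer inside $\Gamma$; all that Moore ergodicity buys you (via the metric rigidity argument the paper records as Claim~\ref{rigidity}) is that the norm $\|\sigma\|_{h^{-m}}$ is a \emph{constant}, not that it is zero. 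To exclude the possibility that this constant is nonzero, the paper needs a separate cohomological input (Claim~\ref{integral}): the integral
\[
\int_{\mathcal S} c_1\bigl(\mathcal O_{P(T_X)}(1)\bigr)\wedge\nu^{\dim\mathcal S-1}>0
\]
shows $\mathcal O_{\mathcal S}(m)$ is not holomorphically trivial, so a nowhere-vanishing section cannot exist. Your proposal does not contain an analogue of this step. Likewise, the ``constant along fiber directions by the maximum principle'' is not automatic, since the induced metric on $\mathcal O_{\mathcal S}(-1)$ is not flat along the fibers of $\mathcal S\to X$; the paper instead uses a curvature/integral formula to show $du$ vanishes on the null spaces $\mathcal N_{o,[v]}$ and only then invokes Moore ergodicity. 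So: keep your global strategy, but replace the Lemma~\ref{sb}-style finish by the two-step argument ``metric rigidity $\Rightarrow\|\sigma\|$ constant'' plus ``Chern-class positivity $\Rightarrow$ constant is zero'' (this is Proposition~\ref{restrictedstablebl} in the paper).
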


Let us explain the idea to prove such a result. Recall that in the locally reducible case we used in an essential way the existence of natural holomorphic foliations on the manifold coming from the irreducible factors of the universal cover. At a first glance, it could be therefore tempting to think that the irreducibility of the tangent bundle could be somehow an obstruction for the Green--Griffiths locus to cover the whole manifold. The theorem above tells us that this is indeed not the case.

To prove Theorem \ref{locirredthm} we need thus to find something which replaces the existence of natural foliations. The idea is somehow to use, instead of the trivial foliations on the universal cover $\Omega$ of $X$, the existence of many polydisks nicely embedded in $\Omega$.

\subsection{The Green--Griffiths locus of Shimura varieties: an \lq\lq all or nothing\rq\rq{} principle \`a la Ullmo--Yafaev.}\label{UY}
It is not easy in general to construct totally geodesic subvarieties in arbitrary compact quotients of symmetric hermitian domains. Interesting examples are given by the theory of Shimura varieties. Consider a connected semisimple and simply connected $\mathbb Q$-anisotropic linear algebraic group $\mathbb G$ over $\mathbb Q$ with associated Lie group $G=\mathbb G(\mathbb R).$ Then, for any congruence subgroup $\Gamma$ which acts without fixed point on $D=G/K$, the quotient space $X=D/\Gamma$ is a smooth projective variety.

Any $g\in \mathbb G_{\mathbb Q}$ gives rise to a Hecke correspondence $T_g$ on $X$ as follows. Let $\Gamma_g=\Gamma \cap g^{-1}\Gamma g$: this is a congruence subgroup of finite index in $\Gamma$. The variety $Y=D/\Gamma_g$ is smooth projective and admits two finite \'etale maps to $X$, given by
$p_1(\Gamma_gx)=\Gamma x$, $p_2(\Gamma_g x)=\Gamma gx,$ for any $x \in D$. The action on cycles of $X$ is given by ${p_2}_*p_1^*.$

A direct application of Propositions \ref{finite} and \ref{invIm} gives the following result which should be seen as a geometric counterpart of the \lq\lq all or nothing\rq\rq{} principle of \cite{UY10} and as a preliminary version of Theorem \ref{locirredthm}.

\begin{theorem}\label{AON}
Let $X$ be a compact Shimura variety associated to a semisimple, connected and simply connected $\mathbb Q$-anisotropic algebraic group over $\mathbb Q$. Then $GG(X)=\emptyset$ or $GG(X)=X$.
\end{theorem}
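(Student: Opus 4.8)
The strategy is to prove that $GG(X)$ is a Zariski closed subset of $X$ which is stable under every Hecke correspondence $T_g$, $g\in\mathbb G(\mathbb Q)$, and then to conclude by the Zariski density of Hecke orbits. First, for each $g\in\mathbb G(\mathbb Q)$ the variety $Y=D/\Gamma_g$ is again a smooth projective manifold: it is the quotient of $D$ by the torsion free congruence subgroup $\Gamma_g$ of finite index in $\Gamma$, and $\mathbb Q$-anisotropy of $\mathbb G$ keeps it compact. Moreover both projections $p_1,p_2\colon Y\to X$ are finite \'etale. Hence Proposition \ref{invIm} yields $p_1^{-1}\bigl(GG(X)\bigr)\subseteq GG(Y)$, while the \'etale part of Proposition \ref{finite} yields $p_2\bigl(GG(Y)\bigr)\subseteq GG(X)$. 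Composing these two inclusions,
$$
T_g\bigl(GG(X)\bigr)=p_2\Bigl(p_1^{-1}\bigl(GG(X)\bigr)\Bigr)\subseteq p_2\bigl(GG(Y)\bigr)\subseteq GG(X),
$$
so that $GG(X)$ is Hecke stable; and it is Zariski closed, as was already used in Section \ref{2} and in the proof of Proposition \ref{finite}.

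Assume now $GG(X)\neq\emptyset$ and fix a point $x\in GG(X)$. By the Hecke stability just established, the entire Hecke orbit $T(x)=\bigcup_{g\in\mathbb G(\mathbb Q)}T_g(x)$ is contained in $GG(X)$. The remaining, and only non formal, ingredient is the classical fact that the Hecke orbit of any point of a connected Shimura variety is Zariski dense in it: since $\mathbb G$ is semisimple, simply connected and $\mathbb R$-isotropic (the group $G=\mathbb G(\mathbb R)$ is non compact, being the automorphism group of the non compact domain $D=G/K$), strong approximation forces $\mathbb G(\mathbb Q)$ to be dense in the finite ad\`eles, and from this one deduces ---exactly as in the \lq\lq all or nothing\rq\rq{} principle of \cite{UY10}--- that a Zariski closed Hecke stable subset of $X$ is either empty or all of $X$. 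Applying this to $GG(X)$, which we assumed non empty, gives $GG(X)=X$. Combined with the obvious remaining alternative $GG(X)=\emptyset$, this establishes the dichotomy.

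The only real obstacle is thus the density of Hecke orbits (equivalently, the classification of Zariski closed Hecke stable subsets of $X$); this is precisely where the arithmetic hypotheses enter ---simple connectedness for strong approximation, $\mathbb Q$-anisotropy to ensure $X$ is compact projective so that the jet differential machinery of Section \ref{2} applies verbatim. Everything else reduces formally to the covering statements of Propositions \ref{finite} and \ref{invIm}. Alternatively, in place of strong approximation one may invoke the equidistribution of Hecke points, which gives density of $T(x)$ already in the archimedean topology and hence \emph{a fortiori} in the Zariski topology.
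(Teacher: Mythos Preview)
Your proof is correct and follows essentially the same architecture as the paper's: establish Hecke stability of $GG(X)$ via Propositions \ref{finite} and \ref{invIm}, then invoke density of Hecke orbits to conclude. The one point of divergence is the justification of that density: the paper argues directly from real approximation (density of $\mathbb G(\mathbb Q)$ in $\mathbb G(\mathbb R)$, cited from \cite{PR94}), which already gives archimedean---hence Zariski---density of the Hecke orbit of any point, whereas you phrase things through strong approximation at the finite places in the style of \cite{UY10}. Both routes are valid here; the real approximation argument is marginally more elementary and more transparently matched to the archimedean quotient $X=D/\Gamma$, while your formulation has the advantage of plugging directly into the Ullmo--Yafaev framework. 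Your alternative via equidistribution of Hecke points is closest in spirit to what the paper actually does.
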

\begin{proof}
Since the group $G$ is connected, $\mathbb G_{\mathbb Q}$ is dense in $G=\mathbb G(\mathbb R)$ \cite{PR94}. It follows that if $Y \subset X$ is a subvariety of $X$, the irreducible components of $T_g(Y)$ for $g \in \mathbb G_{\mathbb Q}$ are dense in $X$.

We conclude using Propositions \ref{finite} and \ref{invIm} which give that if $Y \subset GG(X)$ then $T_g(Y) \subset GG(X).$
\end{proof}

Now, we give a criterion to decide on which side of the alternative we are.

\begin{corollary}
Let $X$ be a compact Shimura variety associated to a semisimple, connected and simply connected $\mathbb Q$-anisotropic algebraic group over $\mathbb Q$. Suppose that $X$ contains a subvariety $Y$ of positive dimension such that $GG(Y)=Y$. Then, $GG(X)=X$.
\end{corollary}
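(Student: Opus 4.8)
The plan is to show that the Hecke orbit of the given subvariety $Y$ already fills up $X$, and then invoke the \lq\lq all or nothing\rq\rq{} mechanism of Theorem \ref{AON} together with Propositions \ref{finite} and \ref{invIm}. Concretely, since $Y$ has positive dimension and $GG(Y)=Y$, the first step is to observe that $Y\subset GG(X)$. This is exactly the content already used in the proof of Theorem \ref{AON}: for a point $y\in Y$ and any $k$-jet $\varphi_k$ tangent to $Y$ at $y$, the inclusion $E^{GG}_{k,m}T_X^*\to E^{GG}_{k,m}T_Y^*$ shows that if all global jet differentials of $Y$ with values in $A^{-1}|_Y$ vanish on $\varphi_k$, then so do all global jet differentials of $X$; but $GG(Y)=Y$ guarantees such jets exist at every point of $Y$. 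Hence $Y\subset GG(X)$.

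Next I would apply the Hecke-correspondence argument verbatim from the proof of Theorem \ref{AON}. Because $\mathbb G$ is connected, $\mathbb G_{\mathbb Q}$ is dense in $G=\mathbb G(\mathbb R)$, so the union of the irreducible components of $T_g(Y)$ as $g$ ranges over $\mathbb G_{\mathbb Q}$ is Zariski dense in $X$. For each such $g$, the Hecke correspondence $T_g$ is defined through the intermediate variety $W=D/\Gamma_g$ with its two finite \'etale maps $p_1,p_2\colon W\to X$; by Proposition \ref{invIm} applied to $p_1$ we get $p_1^{-1}(Y)\subset p_1^{-1}(GG(X))\subset GG(W)$, and then by Proposition \ref{finite} applied to the \'etale map $p_2$ we get $p_2\bigl(p_1^{-1}(Y)\bigr)=T_g(Y)\subset p_2(GG(W))\subset GG(X)$. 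Since $GG(X)$ is Zariski closed and contains a Zariski-dense union of subvarieties, we conclude $GG(X)=X$.

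The argument is essentially a packaging of results already in hand, so there is no serious obstacle; the only point that requires a little care is the very first step, namely checking that $GG(Y)=Y$ for the (possibly singular) subvariety $Y$ really does force $Y\subset GG(X)$. If $Y$ is smooth this is immediate from the functoriality of jet differentials as recalled in Section \ref{2} (Remark \ref{vanishing} together with the surjection $E^{GG}_{k,m}T_X^*|_Y\to E^{GG}_{k,m}T_Y^*$); if one wants to allow singular $Y$ one works on the smooth locus $Y_{\mathrm{reg}}$, uses that jet differentials on $X$ restrict to jet differentials tangent to $Y_{\mathrm{reg}}$ there, and then takes Zariski closure, exactly as in the extension-across-codimension-two argument in the proof of Theorem \ref{main}. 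Everything else is a direct appeal to Propositions \ref{finite} and \ref{invIm} and to the density of $\mathbb G_{\mathbb Q}$ in $G$.
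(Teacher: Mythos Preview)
Your proof is correct and follows essentially the same approach as the paper. The only difference is one of packaging: once you have shown $Y\subset GG(X)$ (so $GG(X)\ne\emptyset$), the paper simply cites Theorem \ref{AON} as a black box to conclude $GG(X)=X$, whereas you unpack the Hecke-correspondence argument again; this is harmless redundancy rather than a different route.
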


\begin{proof}
Fix an ample line bundle $A\to X$. First we observe that if $GG(Y)=Y$ then $Y\subset GG(X)$. Indeed, if $y\in Y$, for all integer $k>0$ there exists a $k$-jet $\varphi_{k}\colon(\mathbb C,0)\to (Y,y)$ such that for all integer $m>0$ and all $Q \in H^0(Y,E_{k,m}^{GG}T^*_{Y}\otimes A^{-1}|_Y)$, one has $Q(\varphi_{k}(0))= 0$. Given any $P \in H^0(X,E_{k,m}^{GG}T^*_{X}\otimes A^{-1})$, restricting $P$ to $Y$ we obtain $P(\varphi_{k}(0))= 0$. But then, $y \in GG(X)$.

Finally, since then $GG(X)\ne\emptyset$, thanks to Theorem \ref{AON} we obtain $GG(X)=X.$
\end{proof}

Although we have considered the compact case, the same strategy can be applied in the isotropic case. 
An interesting and very explicit case is the one of Siegel modular varieties which we shall discuss now in some details.

\subsection{The Green--Griffiths locus of (compactifications of) Siegel modular varieties}\label{siegel}
Let $n\ge 2$ and 
$$
\Omega=D^{III}_n:=\{Z\in M(n,\mathbb C)\mid Z={^\dag Z},\quad\operatorname{Id}_n-{^\dag\overline Z}\cdot Z>0\}
$$ 
be the classical bounded symmetric domain of type $III$, which is holomorphically equivalent to the be the Siegel upper half--space $\mathbb H_n=\{\tau\in M(n,\mathbb C)\mid\tau={^\dag}\tau,\Im\tau>0\}$. It has complex dimension $n(n+1)/2$ and rank $n$. The group $\operatorname{Sp}(2n,\mathbb R)$ acts transitively on $\Omega$ and we have indeed a presentation 
$$
\Omega=\operatorname{Sp}(2n,\mathbb R)/U(n)
$$
as a homogeneous space. Now, let $\Gamma\subset\operatorname{Sp}(2n,\mathbb R)$ be a lattice commensurable with $\operatorname{Sp}(2n,\mathbb Z)$, and consider the quotient manifold $X=\Omega/\Gamma$ and any smooth compactification $\overline X$ of $X$.

\begin{proposition}
The Green--Griffiths locus $GG\bigl(\overline{X}\bigr)$ of $\overline X$ is the whole manifold.
\end{proposition}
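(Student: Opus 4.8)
The plan is to reduce the statement about $\overline X$ to a statement about $X$ itself, and then to produce inside $X$ a positive--dimensional subvariety whose Green--Griffiths locus is everything, so that the \lq\lq all or nothing\rq\rq{} mechanism takes over. More precisely: a smooth compactification $\overline X$ is obtained by adding a divisor at the boundary, and $X$ is Zariski open and dense in $\overline X$. First I would observe that $GG(\overline X)$ is a Zariski closed subset of $\overline X$ and that for a point $x\in X$, any $k$-jet $\varphi_k\colon(\mathbb C,0)\to(X,x)$ is also a $k$-jet into $\overline X$; moreover, restricting global jet differentials with values in $A^{-1}$ (for $A$ ample on $\overline X$) from $\overline X$ to the open set $X$ loses nothing as far as detecting points of $GG$ is concerned. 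Hence it suffices to prove that $X\subseteq GG(\overline X)$, and by Zariski density of $X$ in $\overline X$ this gives $GG(\overline X)=\overline X$.

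Next I would exhibit, inside $X$, a copy of an irreducible quotient of a polydisk of the type handled in Section~5. The classical construction of a Hirzebruch--type modular subvariety does this: inside the Siegel modular variety attached to $\operatorname{Sp}(2n,\mathbb Z)$ one finds, as totally geodesic subvarieties, Hilbert modular varieties, and in particular (taking suitable CM or real--quadratic embeddings) one finds irreducible quotients $\Delta^m/\Gamma'$ of a polydisk of dimension $m\ge 2$, commensurable with a Hilbert modular group. This is exactly the setting of Lemma~\ref{sb} in its more general (non necessarily compact but commensurable with the Hilbert modular group) form recorded in Remark~\ref{automorphic}. On such a subvariety $Y$ (or rather on a smooth model of it) one has a natural foliation by disks coming from one of the factors of the polydisk, whose canonical bundle $K_{\mathcal F}$ has negative Kodaira--Iitaka dimension by the argument of Proposition~\ref{modularfoliation} together with the automorphic--forms vanishing of Remark~\ref{automorphic}. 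Then Kodaira's lemma and Theorem~\ref{main} give $GG(Y)=Y$; since $Y$ is positive dimensional, by the same restriction argument used in the corollary after Theorem~\ref{AON} one gets $Y\subset GG(\overline X)$.

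Finally I would run the Hecke--correspondence density argument. Since $\Gamma$ is commensurable with $\operatorname{Sp}(2n,\mathbb Z)$ and $\operatorname{Sp}_{2n}$ is a connected, simply connected, $\mathbb Q$-simple group, $\operatorname{Sp}_{2n}(\mathbb Q)$ is dense in $\operatorname{Sp}(2n,\mathbb R)$, so for $g$ ranging over $\operatorname{Sp}_{2n}(\mathbb Q)$ the images $T_g(Y)$ of the subvariety $Y$ are Zariski dense in $X$. By Propositions~\ref{finite} and \ref{invIm} (which propagate the property \lq\lq being contained in $GG$\rq\rq{} through the finite \'etale maps $p_1,p_2$ defining $T_g$), each $T_g(Y)\subset GG(X)$, and since these are dense we get $GG(X)=X$, hence $X\subseteq GG(\overline X)$ and thus $GG(\overline X)=\overline X$. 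The one technical point requiring care — and the step I expect to be the main obstacle — is the passage from the open variety $X$ to its compactification $\overline X$: one must make sure that jet differentials on $\overline X$ with values in $A^{-1}$, when restricted to $X$, see the points of $X$ in the right way, i.e. that a point of $X$ lies in $GG(\overline X)$ as soon as it lies in $GG(X)$ relative to an ample bundle restricted from $\overline X$; this is where one uses that $GG$ is independent of the chosen ample bundle (Lemma~\ref{ind}) together with the elementary fact that for $x\in X$ a $k$-jet into $X$ is the same as a $k$-jet into $\overline X$ based at $x$.
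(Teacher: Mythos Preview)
Your strategy---produce a Hilbert modular subvariety inside $X$ whose Green--Griffiths locus is everything, then spread it densely---is close in spirit to the paper's, but the density mechanism you invoke has a genuine gap. You appeal to Propositions~\ref{finite} and~\ref{invIm} via Hecke correspondences, exactly as in Theorem~\ref{AON}. But that theorem is stated for \emph{compact} Shimura varieties (the $\mathbb Q$-anisotropic case), where the maps $p_1,p_2\colon Y'\to X$ are finite \'etale between smooth \emph{projective} manifolds. Here $\operatorname{Sp}_{2n}$ is $\mathbb Q$-isotropic, $X$ is not projective, and $GG(X)$ is not even defined in the paper's framework. Passing to compactifications $\overline{p_i}\colon\overline{Y'}\to\overline X$, the morphisms are no longer \'etale---they ramify along the boundary---so Proposition~\ref{invIm}, which you need in order to pull $GG$-containment up through $p_1$, does not apply; its averaging argument does not survive ramification. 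The obstacle you flag (restriction of jet differentials from $\overline X$ to $X$) is actually harmless; the real obstacle is this one.

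The paper bypasses the issue entirely by using a different density mechanism. Rather than moving a single $Y$ by Hecke, it builds from the outset a \emph{family} of maps $\phi_{A_\Lambda}\colon\Delta^n/\Gamma_{A_\Lambda}\to X$, parameterized by bases $\Lambda$ of totally real number fields of degree $n$, each source being (commensurable with) a Hilbert modular variety. For every such map one pulls back an arbitrary $P\in H^0(\overline X,E^{GG}_{k,m}T^*_{\overline X}\otimes A^{-1})$ and invokes the automorphic vanishing of Remark~\ref{automorphic} (valid precisely in the non-compact, commensurable-with-Hilbert-modular setting) to see that $\phi_{A_\Lambda}^*P$ vanishes along the factor foliations; hence $\phi_{A_\Lambda}(\Delta^n/\Gamma_{A_\Lambda})\subset GG(\overline X)$. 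Density then comes from the elementary fact that the matrices $A_\Lambda$ are dense in $\operatorname{GL}(n,\mathbb R)$, so the union of the images is dense in $X$ and therefore in $\overline X$. No functoriality of $GG$ under covers is used---only direct pull-back of sections---and that is exactly what lets the argument go through in the open, non-anisotropic case.
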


\begin{proof}
There is a totally geodesic polydisk $\Delta^n \hookrightarrow \Omega$ given by 
$$
\Delta^n\ni z=(z_1,\dots, z_n) \mapsto z^*=\operatorname{diag}(z_1,\dots, z_n),
$$
where $\operatorname{diag}(z_1,\dots, z_n)$ is the diagonal matrix with $(z_1,\dots, z_n)$ as entries along the diagonal. This corresponds to the embedding 
$$
\begin{aligned}
& \operatorname{SL}(2, \mathbb R)^n \hookrightarrow\operatorname{Sp}(2n,\mathbb R) \\
& M=(M_1,\dots,M_n) \mapsto M^*=
\begin{pmatrix}
a^* & b^* \\
c^* & d^*
\end{pmatrix}
\end{aligned}
$$ 
where, for $i=1,\dots,n$, 
$$
M_i=
\begin{pmatrix}
a_i & b_i \\
c_i & d_i
\end{pmatrix}\in\operatorname{SL}(2,\mathbb R),
$$
$a^*=\operatorname{diag}(a_1,\dots,a_n)$ and similarly for $b^*$, $c^*$ and $d^*$.

More generally (the following construction is taken from \cite{Fre79}), given $A \in\operatorname{GL}(n, \mathbb R)$, one can consider the map $\Delta^n \hookrightarrow\Omega$, given by $\Delta^n\ni z=(z_1,\dots, z_n) \mapsto A^tz^*A$. 
In order to take quotients, one defines
$$
\Gamma_A:=\left\{M \in\operatorname{SL}(2, \mathbb R)^n\quad\text{\rm such that}\quad 
\begin{pmatrix}
A^t & 0 \\
0 & A^{-1}
\end{pmatrix}
M^*
\begin{pmatrix}
A^t & 0 \\
0 & A^{-1}
\end{pmatrix}^{-1} \in \Gamma\right\}.
$$
Indeed, it is straightforward to see that we thus have a well--defined map
$$
\phi_A\colon \Delta^n / \Gamma_A \to X.
$$
 
Again following \cite{Fre90}, consider a totally real number field $\mathbb K$ of degree $n$ together with its embedding with dense image 
$$
\begin{aligned}
& \mathbb K \hookrightarrow\mathbb R^n \\
& \alpha \mapsto\bigl(\alpha^{(1)},\dots,\alpha^{(n)}\bigr).
\end{aligned}
$$
Next, fix a basis $\Lambda=\{\alpha_1,\dots,\alpha_n\}$ of $\mathbb K$ as a $\mathbb Q$-vector space. Then, the matrices 
$$
A_\Lambda=\bigl(\alpha_i^{(j)}\bigr)_{i,j=1,\dots,n}\in\operatorname{GL}(n,\mathbb K)
$$ 
have the property that the corresponding $\Gamma_{A_\Lambda}$ is commensurable with the Hilbert modular group associated to $\mathbb K$. Moreover, such matrices $A_\Lambda$ are clearly dense in $\operatorname{GL}(n,\mathbb R)$ as $\Lambda$ runs through all possible bases.

Now, take a global jet differential of order $k$ and wighted degree $m$ over $\overline X$, $P \in H^0\bigl(\overline{X},E_{k,m}^{GG}T_{\overline{X}}^*\bigr).$ Taking the pull--back $\phi_{A_\Lambda}^*P$, we obtain a $k$-jet differential on $\Delta^n/\Gamma_{A_\Lambda}$. Therefore, by Remark \ref{automorphic} and an evident variation in the non compact case of the subsequent proposition, $\phi_{A_\Lambda}^*P$ must vanish
on jets tangent to the directions given by the foliations defined by the factors. Thus, we have that $\phi_{A_\Lambda}(\Delta^n/\Gamma_A)\subset GG\bigl(\overline{X}\bigr)$. By density, we finally get $GG\bigl(\overline{X}\bigr)=\overline{X}$.
\end{proof}

\subsection*{The characteristic bundle {\cite[Chapter 6]{Mok89}}}

In order to \lq\lq globalize\rq\rq{} this polydisk approach, the right tool turns out to be the characteristic bundle introduced by Mok. We recall below one possible construction as well as some basic features of this important object.

So, let $\Omega=G_0/K$ be an irreducible bounded symmetric domain of rank $\ge 2$ endowed with its Bergman metric $\omega$ and $X=\Omega/\Gamma$ a quotient of $\Omega$ by a torsion--free discrete group of automorphisms. By a slight abuse of notation, we will still call $\omega$ the induced metric on $X$. Now, consider the projectivized bundle $\pi\colon P(T_X)\to X$ of lines of $T_X$ and the corresponding tautological line bundle $\mathcal O_{P(T_X)}(-1)\subset\pi^*T_X \to P(T_X)$ with the natural hermitian metric $h$ induced by $\omega$.

Next, let $\Theta_\omega(T_X)$ be the Chern curvature of $T_X$ with respect to the metric $\omega$. For each $x\in X$ and $v\in T_{X,x}\setminus\{0\}$, consider the hermitian form on $T_{X,x}$ defined by

\begin{equation}\label{hermitianform}
T_{X,x}\times T_{X,x}\ni(\xi,\eta)\mapsto \frac 1{||v||^2_\omega}\,\omega\bigl(\Theta_\omega(T_X)(v,\overline v)\cdot\xi,\eta\bigr).
\end{equation}

It is the hermitian form associated to the Griffiths curvature of $(T_X,\omega)$ in the direction given by $v$, which is therefore semi--negative. Call $\mathcal N_{x,[v]}\subset T_{X,x}$ its zero eigenspace: it is clearly neither zero nor the whole space. We say that $[v_0]$ is a \emph{characteristic direction} at the point $x\in X$ if $\dim \mathcal N_{x,[v_0]}$ is maximum among $\dim \mathcal N_{x,[v]}$, for $[v]\in T_{X,x}\setminus\{0\}$. This maximal dimension does depend only on $\Omega$: we call it $n(\Omega)$, the \emph{null dimension} of $\Omega$. 

\begin{definition}
The set $\mathcal S=\mathcal S(X)\subset P(T_X)$ of characteristic directions together with the induced projection $\pi|_\mathcal S\colon\mathcal S\to X$ onto $X$ is called the \emph{characteristic bundle}.
\end{definition}

The characteristic directions are, equivalently \cite[Proposition 1 on page 242]{Mok89}, the directions which minimize the holomorphic sectional curvature
$$
T_{X,x}\setminus\{0\}\ni v \mapsto \frac 1{||v||^4_\omega}\,\omega\bigl(\Theta_\omega(T_X)(v,\overline v)\cdot v,v\bigr).
$$
In algebraic terms, the characteristic vectors (\textsl{i.e.} the non zero vectors which define a characteristic direction) are given by the highest weights of the isotropy representation on the holomorphic tangent space.

It is a remarkable fact that $\pi|_\mathcal S\colon\mathcal S\to X$ is indeed a holomorphic fiber bundle and that $\mathcal S$ is a smooth closed complex submanifold of $P(T_X)$ of dimension
$$
\dim\mathcal S=\dim P(T_X)-n(\Omega)=2\dim X-1-n(\Omega).
$$
Even more remarkable, we have the following.

\begin{proposition}[Mok {\cite[Proposition 4 on page 262]{Mok89}}]\label{stablebl}
Let $X=\Omega/\Gamma$ be a compact quotient of an irreducible bounded symmetric domain of rank $\ge 2$. Then, for any integer $m>0$ and any $\sigma\in H^0\bigl(P(T_X),\mathcal O_{P(T_X)}(m)\bigr)$, we have that $\sigma$ vanishes identically on $\mathcal S$.
\end{proposition}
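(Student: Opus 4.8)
The plan is to carry out the argument of Lemma~\ref{sb}, with the coordinate disks of the polydisk replaced by the characteristic (minimal) disks of $\Omega$, organized over $X$ by Mok's characteristic bundle. First, by the projection formula a section $\sigma\in H^0\bigl(P(T_X),\mathcal O_{P(T_X)}(m)\bigr)$ is the same thing as a holomorphic symmetric $m$-differential on $X$, hence pulls back to a $\Gamma$-invariant holomorphic section $\tilde P\in H^0\bigl(\Omega,S^mT_\Omega^*\bigr)$; since the characteristic directions on $X$ are exactly the images of those on $\Omega$, it suffices to show $\tilde P(v^{\otimes m})=0$ for every characteristic vector $v$ of $\Omega$. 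Metrizing $\mathcal O_{P(T_X)}(-1)\subset\pi^*T_X$ by the Bergman metric $\omega$ of $X$, this amounts to saying that the nonnegative smooth function $\Psi:=|\sigma|_h^2$ vanishes identically on $\mathcal S$; since $\mathcal S$ is compact, $\Psi|_{\mathcal S}$ attains a maximum $M$ at some characteristic direction $[v_0]$ over a point $x_0\in X$, and it is enough to prove $M=0$.

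For the local step one invokes the polydisk theorem: through $x_0$ there is a totally geodesic polydisk $D\simeq\Delta^r$, $r=\operatorname{rank}\Omega\ge2$, having $v_0$ as a factor direction, so that $[v_0]=[\partial/\partial\zeta_1]$ at the center $\zeta=0$; on $D$ the Bergman metric of $X$ restricts to a fixed constant multiple $c=c(\Omega)>0$ of the product of Poincaré metrics, and each factor direction at each point of $D$ is again characteristic. Writing $g:=\tilde P\bigl((\partial/\partial\zeta_1)^{\otimes m}\bigr)$, a holomorphic function on $D$, the value of $\Psi$ at the characteristic direction $[\partial/\partial\zeta_1]$ over a polydisk point $\zeta=(\zeta_1,\dots,\zeta_r)$ equals
$$
c^{-m}\,|g(\zeta)|^2\,(1-|\zeta_1|^2)^{2m},
$$
in which, crucially, the Poincaré weight depends on $\zeta_1$ only. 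Since this is $\le M$ with equality at $\zeta=0$, restriction to the central slice $\{\zeta_1=0\}$ gives $|g(0,\zeta_2,\dots,\zeta_r)|\le|g(0)|$ with equality at the origin, so the maximum modulus principle (on $\Delta^{r-1}$, non-vacuous precisely because $r\ge2$) forces $g(0,\zeta_2,\dots,\zeta_r)\equiv g(0)$. This is exactly the mechanism of Lemma~\ref{sb}: along the \emph{transverse} (null) directions the Poincaré weight is constant, so maximality makes $g$ constant there.

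The remaining, and main, difficulty is the globalization: the step above controls $g$ only on the central slice of a single polydisk through the maximum point, whereas one needs a conclusion valid for every characteristic vector of $\Omega$. In Lemma~\ref{sb} this was supplied cheaply by the Density Lemma, which made the shadow of the relevant $\Gamma$-orbit dense in one factor; for a genuinely irreducible $\Omega$ of rank $\ge2$ there is no ambient product onto which to project, and this is where the characteristic bundle is indispensable. One equips $\mathcal S$ with its canonical holomorphic foliation, whose leaves are the liftings of the minimal disks (the factor-disks of the maximal polydisks), and invokes the ergodicity of the associated $\operatorname{SL}(2,\mathbb R)$-flow on $\mathcal S$ --- Moore's theorem, equivalently Howe--Moore mixing --- which makes a generic leaf dense in $\mathcal S$; alternatively one runs Mok's Bochner-type vanishing argument on the characteristic bundle, reaching the same end without ergodicity. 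Either way, applying the maximum modulus step of the previous paragraph along a dense family of characteristic disks forces $\tilde P$, restricted to each characteristic disk, to depend only on the disk parameter and to be invariant there under a dense subgroup of $\operatorname{Aut}(\Delta)$; as at the end of the proof of Lemma~\ref{sb}, a one-variable symmetric differential of positive degree invariant under such a group must vanish, so $M=0$ and $\sigma$ vanishes identically on $\mathcal S$.

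Thus the heart of the proof, and the place where all the hypotheses (irreducibility, $\operatorname{rank}\Omega\ge2$, compactness of $X$) are genuinely used, is this passage from control at a single maximum to control at every characteristic direction; by contrast the other ingredients --- the polydisk theorem, the characteristicity of the factor directions of maximal polydisks, the maximum modulus principle, and the structural properties of the characteristic bundle and its foliation taken from \cite{Mok89} --- are standard and routine.
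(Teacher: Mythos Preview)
The paper does not prove Proposition~\ref{stablebl} separately --- it is quoted from Mok --- but it does prove the stronger Proposition~\ref{restrictedstablebl}, whose argument \emph{a fortiori} yields Proposition~\ref{stablebl}; that is the right comparison.

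Your local step is fine and is a legitimate alternative to the curvature--integral formula used in the proof of Claim~\ref{rigidity} to obtain $du|_{\mathcal N_{o,[v]}}\equiv 0$: the maximum modulus principle on the transverse $(r-1)$--polydisk shows that the maximum set of $\Psi$ propagates in the \emph{null} directions $\mathcal N_{x_0,[v_0]}$. However, your globalization confuses two different flows. The leaves of the minimal--disk foliation $\mathcal M$ on $\mathcal S$ point in the \emph{characteristic} direction (indeed $T_{\mathcal M}\simeq\mathcal O_{P(T_X)}(-1)|_{\mathcal S}$ and $d\pi(T_{\mathcal M,(x,[v])})=[v]$), whereas your max--modulus step controls only the \emph{transverse} null directions, which do not contain $[v]$. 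So ``a generic leaf of $\mathcal M$ is dense'' is not the relevant ergodic input. What the paper actually uses (see the proof of Claim~\ref{rigidity}) is Moore's ergodicity for the noncompact one--parameter subgroup $H\subset G_0$ of transvections along a geodesic lying in a null direction: with that choice of $H$, the maximum set --- closed, nonempty and $H$--invariant --- must be all of $\mathcal S$, whence $\Psi$ is constant.

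The more serious gap is the passage from ``$\Psi\equiv M$ on $\mathcal S$'' to ``$M=0$''. Your stated reason --- that the restriction of $\tilde P$ to each characteristic disk is invariant under a dense subgroup of $\operatorname{Aut}(\Delta)$ --- is nowhere established: you yourself note that the density--lemma trick of Lemma~\ref{sb} relies on projecting $\Gamma$ onto a factor, which is unavailable for irreducible $\Omega$, and nothing in your argument produces such a dense group for a given minimal disk (for a generic minimal disk the stabilizer in $\Gamma$ may well be trivial). Also, ``$\tilde P$ restricted to each characteristic disk depends only on the disk parameter'' is a tautology and carries no information. The paper closes this step quite differently: a constant nonzero norm would make $\mathcal O_{\mathcal S}(m)$ holomorphically trivial, and this is ruled out by the strict positivity $\int_{\mathcal S}c_1\bigl(\mathcal O_{P(T_X)}(1)\bigr)\wedge\nu^{\dim\mathcal S-1}>0$ of Claim~\ref{integral}. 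A shortcut closer to your setup would also work: if $M>0$, restrict to one minimal disk to get a holomorphic nowhere--zero $g$ on $\Delta$ with $|g(\zeta)|(1-|\zeta|^2)^m$ constant, and take $i\,\partial\bar\partial\log$ to contradict the harmonicity of $\log|g|$. Either way, the argument you wrote for $M=0$ does not stand as written.
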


\begin{remark}
We stated here a weaker form of the proposition above, which gives in fact ---in its full strength--- a precise description of the stable base locus of $\mathcal O_{P(T_X)}(1)$ in terms of higher characteristic bundles, see \cite{Mok89} for more details.
\end{remark}

In the sequel, we shall need the following slightly refined version of Proposition \ref{stablebl}, which deals with global sections of the \emph{restriction} of the (anti)ta\-utological bundle to $\mathcal S$.

\begin{proposition}[Compare with {\cite[Chapter 6, (3.1) and (3.2)]{Mok89}}]\label{restrictedstablebl}
Let $X=\Omega/\Gamma$ be a compact quotient of an irreducible bounded symmetric domain of rank $\ge 2$. Then, for any integer $m>0$ we have
$$
H^0\bigl(\mathcal S,\mathcal O_{\mathcal S}(m)\bigr)=\{0\}, 
$$
where $\mathcal O_{\mathcal S}(m)$ is the restriction $\mathcal O_{P(T_X)}(m)|_\mathcal S$.

In other words, the Kodaira--Iitaka dimension of $\mathcal O_{\mathcal S}(1)$ is negative.
\end{proposition}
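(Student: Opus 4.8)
The plan is to run, for the characteristic bundle, the same scheme used in Lemma \ref{sb} and Proposition \ref{modularfoliation}: the r\^ole played there by the trivial foliations on a polydisk is taken here by the foliation of $\mathcal S$ by liftings of minimal disks, while the curvature of the Bergman metric does the bookkeeping. The statement is essentially \cite[Chapter 6, (3.1) and (3.2)]{Mok89}, and for the convenience of the reader I would reproduce the argument in two steps.

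\emph{Step 1 (curvature of $\mathcal O_\mathcal S(-1)$, after \cite[Chapter 6, (3.1)]{Mok89}).} Endow $\mathcal O_\mathcal S(-1)=\mathcal O_{P(T_X)}(-1)|_\mathcal S$ with the metric $h$ induced by $\omega$. Using the standard formula for the Chern curvature of the tautological subbundle $\mathcal O_{P(T_X)}(-1)\subset\pi^*T_X$ — with respect to the splitting of $T_{P(T_X),[v]}$ into its vertical part and the horizontal lift of $T_{X,x}$ for the Chern connection of $\omega$, this curvature is block diagonal, equal to minus a Fubini--Study form on the vertical part and to the hermitian form $\eta\mapsto\frac1{\|v\|^2_\omega}\,\omega\bigl(\Theta_\omega(T_X)(v,\overline v)\eta,\eta\bigr)$ of \eqref{hermitianform} on the horizontal one — together with the definition of a characteristic direction as one maximizing the nullity of \eqref{hermitianform}, one gets that $\Theta_h(\mathcal O_\mathcal S(-1))$ is everywhere negative semidefinite, with kernel at $[v]\in\mathcal S$ equal to the horizontal lift of the null eigenspace $\mathcal N_{x,[v]}$, of dimension $n(\Omega)\ge 1$. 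Moreover (this part I would simply borrow from \cite{Mok89}) this kernel distribution is integrable, its leaves project biholomorphically onto totally geodesic bounded symmetric subdomains $D\subset X$ tangent to the corresponding null eigenspaces, and along each such leaf the $\omega$-norm of the tautological characteristic vector is constant; being simply connected, the leaf makes $\mathcal O_\mathcal S(-1)$ — hence also $\mathcal O_\mathcal S(1)$ — holomorphically trivial over it, with a flat metric.

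\emph{Step 2 (vanishing, after \cite[Chapter 6, (3.2)]{Mok89}).} Suppose $0\ne\sigma\in H^0(\mathcal S,\mathcal O_\mathcal S(m))$. The function $\psi:=\|\sigma\|^2_{h^m}$ is not identically zero on the compact manifold $\mathcal S$, hence attains a positive maximum $c$ at some point $p$, where $\sigma(p)\ne 0$. Restricting $\psi$ to the null leaf $\mathcal L_p$ through $p$ and using Step 1, $\psi|_{\mathcal L_p}$ equals $|g|^2$ times a positive constant for a holomorphic function $g$ on $\mathcal L_p$; being plurisubharmonic and having an interior maximum at $p$, it is constant, so $\sigma$ is zero-free and has constant norm along $\mathcal L_p$. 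Since the maximum locus $\{\psi=c\}$ is closed and saturated for the null foliation, the same conclusion holds along every null leaf it meets. To finish I would propagate this exactly as in Lemma \ref{sb}: using the density of a generic leaf of the minimal-disk foliation of $\mathcal S$ (guaranteed, via the Density Lemma of \cite{Rag72} / ergodicity, by the irreducibility of $X$), and restricting $\sigma$ to such a leaf $\ell\cong\Delta$ — along which $\mathcal O_\mathcal S(m)|_\ell$ is a constant multiple of $K_\ell^{\otimes m}$ with its Poincar\'e metric — one finds, just as $\partial f/\partial w_\lambda\equiv 0$ was obtained there, that $\sigma|_\ell$ is invariant under the full $\operatorname{Aut}(\Delta)$ acting on $\ell$; its Poincar\'e norm is then a constant which, by the harmonicity argument at the end of the proof of Lemma \ref{sb}, must vanish, so $\sigma|_\ell\equiv 0$ and hence $\sigma\equiv 0$.

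The main obstacle is the globalizing part of Step 2. In the polydisk situation of Lemma \ref{sb} the characteristic bundle is a finite disjoint union of copies of $X$ and the slices transverse to the foliation literally carry a trivial line bundle, so the vanishing drops out of a single maximum-principle argument; here $\mathcal S$ has positive fibre dimension and $\mathcal O_\mathcal S(1)$ is \emph{strictly} positive in the fibre directions, so no single maximum-principle step can kill $\sigma$, and one genuinely needs the combined use of the null foliation, the minimal-disk foliation, and the density coming from the irreducibility of the lattice — which is precisely what is packaged in \cite[Chapter 6]{Mok89}, and which the reader may alternatively just invoke.
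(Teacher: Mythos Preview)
Your Step 1 is essentially correct and matches the paper's curvature setup. The problem is in Step 2, and you already sense it in your final paragraph: the globalization via ``density of a generic minimal-disk leaf'' together with an ``$\operatorname{Aut}(\Delta)$-invariance'' of $\sigma|_\ell$ does not go through as written. The analogy with Lemma \ref{sb} breaks down because there the density was of a \emph{group} $\Gamma_1\subset\operatorname{PSL}(2,\mathbb R)$ acting on the transverse disk, which forced the transverse derivatives to vanish; here no subgroup of $\operatorname{Aut}(\Delta)$ acts on an individual leaf $\ell\subset\mathcal S$, so there is nothing forcing $\sigma|_\ell$ to be invariant under anything. Moreover, the maximum locus $\{\psi=c\}$ being saturated for the \emph{null} foliation says nothing about its intersection with a \emph{minimal-disk} leaf (these are complementary directions), so density of $\ell$ in $\mathcal S$ does not let you propagate the constancy of $\psi$.

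The paper's argument avoids this by packaging the ergodicity differently. Instead of looking at $\|\sigma\|^2$ directly, one forms the new metric $g:=(h^m+\sigma\otimes\bar\sigma)^{1/m}$ on $\mathcal O_{\mathcal S}(-1)$, which is still of semi-negative curvature, and writes $g=e^u h$. An integral identity on $\mathcal S$ forces $du$ to vanish along the null directions $\mathcal N_{x,[v]}$; one then lifts $u$ to $\Gamma\backslash G_0$ and observes it is invariant under the one-parameter group $H$ of transvections along a null geodesic. Moore's ergodicity theorem (for the simple group $G_0$ and the lattice $\Gamma$) then forces $u$ to be constant --- this is the correct substitute for the Density Lemma step. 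Hence $\|\sigma\|$ is a constant; if nonzero, $\mathcal O_{\mathcal S}(m)$ would be trivial, which is ruled out by the strict positivity of $\int_{\mathcal S}c_1(\mathcal O_{P(T_X)}(1))\wedge\nu^{\dim\mathcal S-1}$ (a separate short curvature computation). So the two missing ingredients in your sketch are: (i) replace the leaf-density argument by Moore ergodicity applied to a transvection subgroup, and (ii) add the intersection-number step to exclude the nowhere-vanishing case.
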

\begin{proof}
The proof relies essentially on the fact that the argument used to prove \cite[Proposition 4 on page 262]{Mok89} goes through in this stronger version. We sketch it anyway for the reader's convenience.

We proceed by contradiction and suppose there exists an integer $m>0$ and a non zero section $\sigma\in H^0\bigl(\mathcal S,\mathcal O_{\mathcal S}(m)\bigr)$. By a slight abuse of notation, we still call $h$ the restriction of the natural hermitian metric $h$ on $\mathcal O_{P(T_X)}(-1)$ to $\mathcal O_{\mathcal S}(-1)$. Then, $g:=(h^{m}+\sigma\otimes\overline\sigma)^{1/m}$ defines a hermitian metric on $\mathcal O_{\mathcal S}(-1)$ which still has semi--negative curvature, since $\sigma$ is holomorphic. 

\begin{claim}[Compare with Mok's hermitian metric rigidity theorem]\label{rigidity}
In this situation, $g=C\cdot h$ for some positive constant $C$. 
\end{claim}

This claim implies that $||\sigma||_{h^{-m}}$ is constant. If it were a non zero constant this would imply that $\sigma$ never vanishes and thus $\mathcal O_{\mathcal S}(m)$ is holomorphically trivial. But this is impossible since

\begin{claim}\label{integral}
The following inequality holds:
$$
\int_\mathcal S c_1\bigl(\mathcal O_{P(T_X)}(1)\bigr)\wedge\nu^{\dim\mathcal S-1}>0,
$$
where $\nu$ is the Kähler form on $P(T_X)$ given by $c_1\bigl(\mathcal O_{P(T_X)}(1),h^{-1}\bigr)+\pi^*\omega$. 
\end{claim}

But then $||\sigma||_{h^{-m}}\equiv 0$ and $\sigma$ is identically zero, too.
\end{proof}

We are left to prove the two claims above.

\begin{proof}[Proof of Claim \ref{rigidity}]
We follow almost word--by--word the alternative proof of Mok's hermitian metric rigidity theorem which makes use of Moore's ergodicity theorem \cite[Chapter 6, \textsection 3]{Mok89}: we just need to check that his argument goes through when everything lives on $\mathcal S$ and does not come necessarily from the whole $P(T_X)$.

So, write $g=e^u\cdot h$, where $u$ is a smooth real function on $\mathcal S$ and write $q:=n(\Omega)$. By \cite[Chapter 6, formula (2) on page 116]{Mok89}, we have that
$$
i\,\partial u\wedge\bar\partial u\wedge c_1\bigl(\mathcal O_{P(T_X)}(-1),h\bigr)^{2n-2q-1}\equiv 0\quad\text{\rm on $\mathcal S$}.
$$
This follows from a quite straightforward integral formula over $\mathcal S$. Now, we lift everything on $\Omega$, where the characteristic bundle is trivial 
$$
\mathcal S(\Omega)\simeq\Omega\times\mathcal S_0\subset\Omega\times\mathbb P^{n-1}=P(T_{\Omega}).
$$ 
Here, $\mathcal S_o$ is the set of characteristic direction over a base point $o\in\Omega$. In particular we have an identification 
$$
T_{\mathcal S(\Omega),(o,[v])}\simeq T_{\mathcal S_o,[v]}\oplus T_{\Omega,o}.
$$
From the explicit expression of the curvature $\Theta_h\bigl(\mathcal O_{P(T_X)}(-1)\bigr)$ it is immediate to check that the non--negative $(1,1)$-form $i\,\partial u\wedge\bar\partial u$ must vanish on $\mathcal N_{o,[v]}\subset T_{\Omega,o}$. Since $u$ is real, it follows that 
\begin{equation}\label{du}
du|_{\mathcal N_{o,[v]}}\equiv 0.
\end{equation} 

Let $U(T_X)$ the $\omega$-unitary tangent bundle to $X$ and $U(\mathcal S)\subset U(T_X)$ the subspace of unitary characteristic vectors. Of course, $u$ pulls--back to a ($S^1$-invariant) function on $U(\mathcal S)$, which we still call $u$ by abuse of notation. Fix a unit characteristic vector $v$ and let $L\subset G_0$ be its stabilizer, so that $U(\mathcal S)=\Gamma\backslash G_0/L$ has a locally homogeneous space (it is more convenient to we write $\Gamma$ on the left for this proof). By pulling--back again we get a positive $\Gamma$-invariant and $L$-invariant smooth function $\hat u$ on $G_0$ which we will show to be invariant under the right action of a closed non--compact subgroup $H\subset G_0$. Now, by Moore's ergodicity theorem, every $H$-invariant subset of $\Gamma\backslash G_0$ is either of zero or of full measure, since $G_0$ is simple and $\Gamma$ a lattice. If $\hat u\colon\Gamma\backslash G_0\to\mathbb R$ were not constant then we could find two positive real numbers $a<b$ such that the set 
$$
V_{a,b}:=\{y\in\Gamma\backslash G_0\mid a<\hat u(y)<b\}
$$
is neither of zero nor of full measure. But by the $H$-invariance of $\hat u$, $V_{a,b}$ would be $H$-invariant, contradiction. It then follows that $u$ itself is constant and therefore $g$ is a constant multiple of $h$.

We now come back to the existence of such a closed non--compact subgroup $H$: it is obtained as a one--parameter of transvections as follows. Take a non zero vector $w\in\mathcal N_{o,[v]}$ and let $\gamma$ be the geodesic on $\Omega$ determined by $w$. Without loss of generality, we can suppose that the characteristic vector $v$ is unitary and form the curve $\gamma^*\subset U(\mathcal S(\Omega))$ obtained by parallel transport of $v$ along $\gamma$ (here we use that $\mathcal S$ and hence $U(\mathcal S)$ are invariant by parallel transport). By construction, the curve $t\mapsto\gamma^*(t)$ is pointwise tangent to $\mathcal N_{\gamma(t),[\gamma^*(t)]}$, so that $u$ is constant along $\gamma^*$ by (\ref{du}). By the same argument, $u$ is also constant along the image $\varphi(\gamma^*)$ of $\gamma^*$ by any isometry $\varphi\in G_0$. By the general theory, $\gamma^*$ is the orbit $H(v)=H(e\mod L)$ of $v$ under a non--compact one--parameter family $H\subset G_0$ of isometries. But then, for $\varphi\in G_0$, $u$ is constant along the $H$-orbit $\varphi H\mod L=\phi(\gamma^*)$. This means exactly that $\hat u$ on $\Gamma\backslash G_0$ is invariant under the action of $H$.
\end{proof}

\begin{proof}[Proof of Claim \ref{integral}]
The Chern curvature $i\,\Theta_{h}\bigl(\mathcal O_{P(T_X)}(-1)\bigr)$ of $h$ at a point $(x,[v])\in P(T_X)$ is roughly given by a \lq\lq vertical\rq\rq{} term, which is nothing but minus the Fubini--Study metric, and a \lq\lq horizontal\rq\rq{} term, which is given by the $(1,1)$-form associated to the hermitian form in (\ref{hermitianform}). In particular, if $[v]$ is a characteristic direction, at $(x,[v])$ it is semi--negative of rank $2n-1-n(\Omega)$. So, the restriction of $i\,\Theta_{h}\bigl(\mathcal O_{P(T_X)}(-1)\bigr)$ to $\mathcal S$ is also semi--negative of rank at least $2n-1-2n(\Omega)>0$. Therefore, along $\mathcal S$, the trace with respect to $\nu$ 
$$
c_1\bigl(\mathcal O_{P(T_X)}(1),h^{-1}\bigr)\wedge\nu^{\dim\mathcal S-1}=-c_1\bigl(\mathcal O_{P(T_X)}(-1),h\bigr)\wedge\nu^{\dim\mathcal S-1}
$$
is strictly positive and thus its integral over $\mathcal S$, too.
\end{proof}

\subsection*{Foliating the characteristic bundle by minimal disks}

Let $\Omega=G_0/K$ be a bounded symmetric domain of rank $\ge 2$ and $X=\Omega/\Gamma$ a compact quotient. We shall recall how to construct a natural (smooth) holomorphic foliation by curves $\mathcal M$ on $\mathcal S$.
The foliation in question is the one associated to minimal disks in $\Omega$. Let us describe how one can construct it.
Let $\Omega^\vee=G_c/K$ be the compact dual of $\Omega$, where $G_c$ is a compact real form of the complexification of $G_0$. Write $\Omega\subset\!\subset\mathbb C^{\dim \Omega}\subset\Omega^\vee$ for the Harish--Chandra and Borel embeddings. As in the non compact case, one defines the notion of characteristic vectors on $\Omega^\vee$ as highest weights of the isotropy representations on the holomorphic tangent space. Since the isotropy representations for $\Omega$ and $\Omega^\vee$ are the same at points of $\Omega$, a tangent vector to $\Omega$ is characteristic for $\Omega$ if and only if it is characteristic for $\Omega^\vee$. Next, there is a notion of \emph{minimal rational curve} in $\Omega^\vee$: these are rational curves representing the positive generator of $H_2(\Omega^\vee,\mathbb Z)\simeq\mathbb Z$.

\begin{proposition}[Mok {\cite[Proposition 1 on page 143]{Mok89}}]
For any $x\in\Omega^\vee$ and any characteristic vector $v\in T_{\Omega^\vee,x}$ there exists a unique minimal rational curve $C$ passing through $x$ such that $T_{C,x}=[v]$. Furthermore, all minimal rational curves are obtained in this way.
\end{proposition}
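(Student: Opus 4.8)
The plan is to realise the compact dual as a rational homogeneous space and to identify minimal rational curves with projective lines. Write $\Omega^\vee = G^{\mathbb C}/P$, where $G^{\mathbb C}$ is the complexification of $G_0$ and $P$ is the maximal parabolic with Levi factor $K^{\mathbb C}$; the Harish--Chandra decomposition $\mathfrak g^{\mathbb C} = \mathfrak p^-\oplus\mathfrak k^{\mathbb C}\oplus\mathfrak p^+$ identifies $T_{\Omega^\vee,o}\simeq\mathfrak p^+$ at the base point $o = eP$, and the isotropy representation of $K^{\mathbb C}$ on $\mathfrak p^+$ is irreducible because $\Omega$ is. Since $P$ is maximal, $\operatorname{Pic}(\Omega^\vee) = \mathbb Z\cdot L$ with $L$ the very ample generator; the associated (first canonical, or minimal) embedding $\iota\colon\Omega^\vee\hookrightarrow\mathbb P(V_\lambda)$ sends $o$ to the highest weight line of the irreducible $G^{\mathbb C}$--module $V_\lambda$ of highest weight $\lambda$, the fundamental weight attached to the marked node. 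The pairing $\operatorname{Pic}(\Omega^\vee)\times H_2(\Omega^\vee,\mathbb Z)\to\mathbb Z$ is perfect (the (co)homology is torsion free and concentrated in even degrees), so $L\cdot\beta = 1$ for the positive generator $\beta$; thus a rational curve $C$ is minimal precisely when $\iota(C)$ is a degree one curve, i.e. a projective line lying on $\Omega^\vee$, in which case $\iota|_C$ is an isomorphism onto that line and $C$ is automatically a smooth $\mathbb P^1$.

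Granting this dictionary, \emph{uniqueness} is immediate: two projective lines through a common point $\iota(x)$ with the same one--dimensional tangent subspace there coincide, and $\iota$ is an embedding. For \emph{existence}, recall that by definition a characteristic vector at $o$ is a highest weight vector of the isotropy representation of $K^{\mathbb C}$ on $\mathfrak p^+$, which, up to scalars and up to the $K^{\mathbb C}$--action, is the highest root vector $e_\mu\in\mathfrak p^+$, $\mu$ being the highest root of $\mathfrak g^{\mathbb C}$. Set $\mathfrak s = \mathbb C e_\mu\oplus\mathbb C h_\mu\oplus\mathbb C e_{-\mu}\simeq\mathfrak{sl}_2$, with $e_{-\mu}\in\mathfrak p^-$ and $h_\mu\in\mathfrak k^{\mathbb C}$, and let $S = \exp\mathfrak s\subset G^{\mathbb C}$. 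Since $\mathfrak s\cap\operatorname{Lie}(P)$ is a Borel subalgebra of $\mathfrak s$, the orbit $C := S\cdot o$ is $S/B_S\simeq\mathbb P^1$, smooth rational, with $T_{C,o} = [e_\mu]$; and the cominuscule identity $\langle\lambda,\mu^\vee\rangle = 1$ characteristic of Hermitian symmetric spaces shows that the $\mathfrak s$--submodule of $V_\lambda$ generated by its highest weight vector is the standard two--dimensional representation, so $\iota(C)$ is a line and $L\cdot C = 1$, i.e. $C$ is a minimal rational curve. Transporting $C$ first by $K^{\mathbb C}$ (transitive on characteristic directions at $o$) and then by $G^{\mathbb C}$ (transitive on $\Omega^\vee$) yields, for every $x$ and every characteristic vector $v\in T_{\Omega^\vee,x}$, a minimal rational curve through $x$ tangent to $[v]$.

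Finally, to see that \emph{every} minimal rational curve arises this way --- equivalently, that the tangent direction at every point of a line on $\Omega^\vee$ is characteristic --- it suffices to know that $G^{\mathbb C}$ acts transitively on the set of lines on $\Omega^\vee$, for then each line is a $G^{\mathbb C}$--translate of $C = S\cdot o$. This in turn amounts to identifying the variety of minimal rational tangents at a point --- the set of tangent directions of lines through that point --- with the projectivized set of characteristic vectors, i.e. with the closed $K^{\mathbb C}$--orbit in $\mathbb P(\mathfrak p^+)$: the previous paragraph gives one inclusion, and the reverse inclusion, together with the homogeneity of the incidence variety of lines, is the structural heart of the matter. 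This last identification is the main obstacle; it is where the hypotheses that $\Omega$ be irreducible Hermitian symmetric of rank $\ge 2$ are genuinely exploited, and it rests on the structure theory of $G/P$ (irreducibility of the isotropy representation, properties of the highest root, the cominuscule condition) rather than on any hyperbolicity--theoretic input.
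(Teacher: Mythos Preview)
The paper does not prove this proposition at all: it is quoted verbatim from Mok's monograph \cite[Proposition 1 on page 143]{Mok89} and used as a black box to construct the foliation $\mathcal M$ on the characteristic bundle. There is therefore no proof in the paper to compare your attempt against.

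That said, your outline is the standard Lie--theoretic route to this result, and the first two paragraphs are essentially correct: identifying $\Omega^\vee$ with $G^{\mathbb C}/P$ for a cominuscule maximal parabolic, recognising minimal rational curves as lines under the minimal embedding, and producing one such line through the base point via the $\mathfrak{sl}_2$--triple attached to the highest root is exactly how Mok proceeds. The uniqueness argument via the embedding is fine, and the existence--plus--transitivity argument in your second paragraph is sound.

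However, you yourself flag the genuine gap: in the third paragraph you need that \emph{every} line on $\Omega^\vee$ is a $G^{\mathbb C}$--translate of the reference line $S\cdot o$, equivalently that the variety of minimal rational tangents at $o$ coincides with the closed $K^{\mathbb C}$--orbit in $\mathbb P(\mathfrak p^+)$. You state this is ``the main obstacle'' and gesture at structure theory without supplying the argument. This is not a minor omission --- it is precisely the content of the ``Furthermore'' clause, and without it the proposition is only half proved. Mok's argument for this step (in the cited reference) uses the explicit deformation theory of the reference line together with a dimension count on the Chow/Hilbert scheme of lines, or equivalently the classification of lines on a Hermitian symmetric space via the restricted root system; either way it requires real work that your sketch does not provide. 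If you want a self--contained proof you should fill this in, for instance by computing the normal bundle $N_{C/\Omega^\vee}\simeq\mathcal O(1)^a\oplus\mathcal O^b$ of the reference line, deducing that the space of lines through $o$ is smooth of dimension $a$, and then matching $a$ with the dimension of the closed $K^{\mathbb C}$--orbit.
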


Now, it can be shown that the intersection of any minimal rational curve with $\Omega$ is biholomorphic to a disk. Such disks in $\Omega$ are called \emph{minimal disks}. We shall still call minimal disk the image in $X$ of a minimal disk in $\Omega$ under the uniformization map $\Omega\to X$.

Consider the projectivized bundle $P(T_X)\to X$. For each non singular germ of holomorphic map $\varphi\colon(\mathbb C,0)\to X$, one can define a projectivized lifting $\widetilde\varphi\colon(\mathbb C,0)\to P(T_X)$ simply by writing $\widetilde\varphi(\zeta)=(\varphi(\zeta),[\varphi'(\zeta)])$; here, we think at point of $P(T_X)$ as pairs $(x,[v])$, where $x\in X$ and $v\in T_{X,x}\setminus\{0\}$. Tautologically, the restriction of $\mathcal O_{P(T_X)}(-1)$ to the image $\widetilde\varphi\bigl((\mathbb C,0)\bigr)$ of $\widetilde\varphi$ is isomorphic to the tangent bundle to $\widetilde\varphi\bigl((\mathbb C,0)\bigr)$. Now, as we saw, for each point $(x,[v])\in\mathcal S$ there exists a unique minimal disk $f\colon\Delta\to X$ passing through $x$ and whose tangent space at $x$ is $[v]$. Moreover, by uniqueness, for each $f(\zeta)$ one has that $(f(\zeta),[f'(\zeta)])\in\mathcal S$, \textsl{i.e.} $\widetilde f(\Delta)\subset\mathcal S$. The collection of all $\widetilde f(\Delta)$, when $f$ runs among all minimal disks in $X$, thus endows $\mathcal S$ with a holomorphic foliation by curve $\mathcal M$, whose tangent bundle $T_\mathcal M$ is naturally isomorphic to the restriction of $\mathcal O_{P(T_X)}(-1)$ to $\mathcal S$. 

\bigskip
The proof of Theorem \ref{locirredthm} is now almost immediate.

\begin{proof}[Proof of Theorem \ref{locirredthm}]
The idea is to use the characteristic bundle $\pi\colon\mathcal S\to X$ together with Remark \ref{relmorph}. 

The (smooth) holomorphic foliation $\mathcal M$ on $\mathcal S$ constructed above is such that the canonical bundle $K_\mathcal M$ to this foliation is isomorphic to $\mathcal O_{P(T_X)}(1)|_\mathcal S$. By Proposition \ref{restrictedstablebl}, $K_\mathcal M$ has negative Kodaira--Iitaka dimension. Thus, by Theorem \ref{main}, we obtain that $GG(\mathcal S,T_\mathcal M)=\mathcal S$.

Next, the tangent bundle $T_\mathcal M\subset T_\mathcal S\subset T_{P(T_X)}|_\mathcal S$ to $\mathcal M$ is transverse to the kernel of the differential $d\pi$. In fact given a direction tangent to $\mathcal M$, it is of the form $\widetilde f'$ for some minimal disk $f$ in $X$. Thus, since $\pi\circ\widetilde f=f$, we have
$$
d\pi\bigl(\widetilde f'\bigr)=f'\ne 0.
$$ 
Then, Remark \ref{relmorph} tells us that $\pi\bigl(GG(\mathcal S,T_\mathcal M)\bigr)\subset GG(X)$ and, since we have $GG(\mathcal S,T_\mathcal M)=\mathcal S$, Theorem \ref{locirredthm} follows.

\end{proof}
\section{Final remarks on the Green--Griffiths--Lang conjecture}

The final step for the solution of the Green--Griffiths--Lang conjecture using only (invariant) jet bundles requires to show that at least one of the base loci
$$
B_k:= \bigcap_{m\ge 1} \operatorname{Bs}\bigl(\mathcal O_{P_k(V)}(m)\otimes\pi_{0,k}^*A^{-1}\bigr)\setminus P_k(V)^{\textrm sing} \subset P_k(V)
$$ 
projects down to a proper algebraic subvariety $Y_k:=\pi_{0,k}(B_k)$ in $X$. These lines show that, unfortunately, this is hopeless without any further assumption on $X$ besides that of being of general type.

Nevertheless, one could think of a less demanding property, namely that for all irreducible subvarieties $Z \subset P_k(V)$ such that $\pi_{0,k}(Z)=X$, the restriction $\mathcal{O}_{P_k(V)}(1)|_Z$ could be big.

This is exactly what is proved in a theorem of Lu and Yau \cite{LY90} for order one jets, for $X$ a surface of general type with $c_1(X)^2-2\,c_2(X)>0$, achieving thus the proof of the Green--Griffiths--Lang conjecture in this case. But we can not expect even this less demanding property to be true in general since we have seen at the end of Section \ref{2} that there exist projective manifolds $X$ of general type such that for each $k>0$ there is a smooth submanifold $Z_k\subset P_k(V)$ which projects biholomorphically onto $X$ via $\pi_{0,k}$ and such that for all integers $m>0$ we have
$$
H^0\bigl(X_k,(\mathcal O_{P_k(V)}(m)\otimes\pi^*_{0,k}A^{-1})|_{X_k}\bigr)=0.
$$
Thus, we see that Lu and Yau's result is sharp in this sense and that $c_1^2(X)=2\,c_2(X)$ is somehow a threshold for projective surfaces of general type as far as the solution to the Green--Griffiths--Lang conjecture using only base locus of sections of $k$-jet bundles is concerned.

On the other hand, McQuillan's celebrated work on surfaces \cite{McQ98} shows that the conjecture is true whenever the second Segre number $c_1^2-c_2$ of the surface is positive (this being the case for the compact free quotients of the bidisc thanks to the Hirzebruch proportionality principle, see \cite{Hir58}). Observe that his proof relies upon a combination of the theories of jet differentials and that of foliations: jet differentials of order one (the existence of which is assured by the assumption on the Segre number) produce (multi)foliations such that every entire curve must be tangent to. The algebraic degeneracy is then obtained in this situation as a consequence of deep results about parabolic leaves of holomorphic foliations on surfaces of general type rather than trying to control the Green--Griffiths locus (which, \textsl{a posteriori} would not give the desired result, also in view of what is described in these lines).

As far as we know, at present, the only cases where an explicit control of the Green--Griffiths locus is known are given by generic projective hypersurfaces of high degree: it is shown in \cite{DMR10} that if $X\subset\mathbb P^{n+1}$, $n\ge 2$, is a generic projective hypersurface of degree $\deg X\ge 2^{n^5}$, then $GG(X)$ is contained in a proper subvariety of $X$. This result is further refined in \cite{DT10} where, under the same hypotheses, it is proved that $GG(X)$ is contained in a proper subvariety of codimension at least two. The reader may consult the survey \cite{dr11} for more details about hyperbolicity of projective hypersurfaces.

\bibliography{bibliography}{}

\end{document}